\newcolumntype{C}{>{\centering\arraybackslash}X} 
\newtheorem*{rep@theorem}{\rep@title}
\newcommand{\newreptheorem}[2]{%
\newenvironment{rep#1}[1]{%
 \def\rep@title{#2 \ref{##1}}%
 \begin{rep@theorem}}%
 {\end{rep@theorem}}}
\newtheorem*{rep@cor}{\rep@title}
\newcommand{\newrepcor}[2]{%
\newenvironment{rep#1}[1]{%
 \def\rep@title{#2 \ref{##1}}%
 \begin{rep@cor}}%
 {\end{rep@cor}}}
\newtheorem*{rep@prop}{\rep@title}
\newcommand{\newrepprop}[2]{%
\newenvironment{rep#1}[1]{%
 \def\rep@title{#2 \ref{##1}}%
 \begin{rep@prop}}%
 {\end{rep@prop}}}
\newtheorem{theorem}{Theorem}[section]
\numberwithin{theorem}{section}
\newenvironment{manualtheorem}[1]{%
    \manualtheoreminner
}{\endmanualtheoreminner}
\newtheorem{lemma}[theorem]{Lemma}
\newtheorem{proposition}[theorem]{Proposition}
\theoremstyle{definition}
\newtheorem*{definition*}{Definition}
\newtheorem{definition}[theorem]{Definition}
\theoremstyle{remark}
\newtheorem{remark}[theorem]{Remark}
\def\paragraph{\@startsection{paragraph}{4}%
  \z@\z@{-\fontdimen2\font}%
  {\normalfont\bfseries}}
\numberwithin{equation}{section}
\patchcmd{\subsection}{-.5em}{.5em}{}{}
\renewcommand\section{\@startsection{section}{1}%
  \z@{.7\linespacing\@plus\linespacing}{.5\linespacing}%
  {\normalfont\scshape\centering}}
\renewcommand\subsection{\@startsection{subsection}{2}%
  \z@{-.5\linespacing\@plus-.7\linespacing}{.5\linespacing}%
  {\bfseries}}
\renewcommand\subsubsection{\@startsection{subsubsection}{3}%
  \z@{-.5\linespacing\@plus-.7\linespacing}{.5\linespacing}%
  {\itshape}}
\def\l@paragraph{\@tocline{4}{0pt}{1pc}{7pc}{}}
\newcommand{\C}{\mathbb{C}}
\newcommand{\R}{\mathbb{R}}
\newcommand{\Z}{\mathbb{Z}}
\newcommand{\h}{\mathbb{H}}
\newcommand{\GH}{\mathcal{GH}}
\newcommand{\Lsl}{\mathfrak{sl}}
\newcommand{\diag}{\mathrm{diag}}
\newcommand{\vl}{|}
\newcommand{\Ker}{\mathrm{Ker}}
\newcommand{\Sym}{\text{Sym}}
\newcommand{\PSL}{\mathbb{P}\mathrm{SL}}
\newcommand{\Ree}{\mathcal{R}e}
\newcommand{\Hit}{\mathrm{Hit}}
\newcommand{\g}{\mathbf{g}}
\newcommand{\Sg}{\Sigma}
\newcommand{\bigslant}[2]{{\raisebox{.2em}{$#1$}\left/\raisebox{-.2em}{$#2$}\right.}}
\DeclarePairedDelimiterX{\scal}[2]{\langle}{\rangle}{#1 \mid #2}
\DeclarePairedDelimiterX{\scall}[2]{\langle}{\rangle}{#1, #2}
\DeclareMathOperator{\Hom}{Hom}
\DeclareMathOperator{\SL}{\mathrm{SL}}
\DeclareMathOperator{\Ad}{Ad}
\newcommand{\SO}{\mathrm{SO}}
\newcommand{\charvarietyn}{\mathfrak{R}_{2,n+1}(\Sigma)}
\newcommand{\charvarietynmaximal}{\mathfrak{R}_{2,n+1}^{\text{max}}(\Sigma)}
\newcommand{\charvarietymaximal}{\mathfrak{R}^{\text{max}}_{2,3}(\Sigma)}
\newcommand{\charvarietymaximalstiefel}{\mathcal{M}(\Sigma)^{sw_1}_{sw_2}}
\newcommand{\Hyp}{\mathbb{H}^{2,n}}
\newcommand{\Hyptilde}{\widetilde{\mathbb{H}}^{2,n}}
\newcommand{\Hypdue}{\mathbb{H}^{2,2}}
\newcommand{\Hypduetilde}{\widetilde{\mathbb{H}}^{2,2}}
\newcommand{\bi}{\mathbf{b}}
\newcommand{\Higgsn}{\mathcal{H}_{2,n+1}(X)}
\newcommand{\lso}{\mathfrak{so}}
\begin{document}

\setcounter{secnumdepth}{3}

\title[Riemannian metric for representations acting on pseudo-hyperbolic space]{Riemannian geometry of maximal surface group representations acting on pseudo-hyperbolic space}

\author[Nicholas Rungi]{Nicholas Rungi}
\address{NR: Universit\'e Grenoble Alpes, Institut Fourier, Grenoble, France.} \email{nicholas.rungi@univ-grenoble-alpes.fr} 

\date{\today}
\begin{abstract}For any maximal surface group representation into $\SO_0(2,n+1)$, we introduce a non-degenerate scalar product on the the first cohomology group of the surface with values in the associated flat bundle. In particular, it gives rise to a non-degenerate Riemannian metric on the smooth locus of the subset consisting of maximal representations inside the character variety. In the case $n=2$, we carefully study the properties of the Riemannian metric on the maximal connected components, proving that it is compatible with the orbifold structure and finding some totally geodesic sub-varieties. Then, in the general case, we explain when a representation with Zariski closure contained in $\SO_0(2,3)$ represents a smooth or orbifold point in the maximal $\SO_0(2,n+1)$-character variety and we show that the associated space is totally geodesic for any $n\ge 3$.\end{abstract}
\maketitle
\tableofcontents

\section{Introduction}
Let $\Sg$ be a closed, connected and oriented surface of genus $g\ge 2$, with universal cover $\widetilde\Sg$. In recent years, people have been interested in studying geometric and dynamical properties of surface group representations into real semisimple Lie group $G$ of rank greater than one, with the aim of generalizing Teichm\"uller theory which concerns the group $\PSL(2,\R)$ (\cite{Wienhard_intro}). For a large class of Lie groups, the space formed by discrete and faithful representations consists of a union of connected components in the character variety. Such a subset is called a \emph{higher Teichm\"uller space}, and depending on the property of the Lie group, it contains an isomorphic copy of Teichm\"uller space $\mathcal{T}(\Sg)$, referred to as the \emph{Fuchsian locus}, and denoted with $\mathcal{F}(\Sg)$. \\ \\ From a differential geometric point of view, on Teichm\"uller space it was possible to define Riemannian metrics (\cite{weil1958modules,ahlfors1961curvature,ahlfors1961some,wolpert1986thurston}) and asymmetric distances (\cite{thurston1998minimal,papadopoulos2015finsler}), many of which are natural in terms of hyperbolic structures on the surface. A fundamental result of Bers (\cite{bers1961correction},\cite{bers1965automorphic}) also proved the existence of a natural complex structure on $\mathcal{T}(\Sg)$ which turns out to be compatible with the Weil-Petersson Riemannian metric (\cite{weil1958modules}), written in terms of quadratic differentials on the associated Riemann surface, and giving rise to a K\"ahler metric on Teichm\"uller space. Nowadays, many properties of the Weil-Petersson metric are known, such as: holomorphic sectional, scalar and Ricci curvature being strictly negative, incompleteness, and invariance for the action of the mapping class group of the surface.  \\ \\ It is then natural to ask whether the same differential geometric structures can also be defined on higher Teichm\"uller spaces, so that they restrict on the Fuchsian locus to those mentioned above. In the last twenty years, remarkable results have been obtained in this regard; in fact, it has been possible to define (pseudo)-Riemannian metrics (\cite{bridgeman2015pressure,li2013teichm,kim2017kahler,tamburelli2021riemannian,mazzoli2021parahyperkahler,rungi2021pseudo,rungi2023mathbb,rungi2023moduli}), asymmetric distances (\cite{carvajales2022thurston}), symplectic forms (\cite{goldman1984symplectic}), and complex structures (\cite{loftin2001affine,Labourie_cubic,labourie2017cyclic,alessandrini2019geometry}), trying to generalize those known on $\mathcal{T}(\Sg)$.    \\ \\ In this paper, we focus on a higher Teichm\"uller space associated with the Lie group $G=\SO_0(2,n+1)$ defined as the identity component of the group of linear transformations of $\R^{n+3}$ preserving a bi-linear form of signature $(2,n+1)$, for $n\ge 2$. Since $\SO_0(2,n+1)$ is a rank $2$ Lie group of Hermitian type, it makes sense to consider the Toledo invariant $\tau(\rho)\in\Z$ associated with a representation $\rho:\pi_1(\Sg)\to\SO_0(2,n+1)$, which will be called \emph{maximal} if $|\tau(\rho)|$ is equal to $-\chi(\Sg)$ (\cite{burger2010surface}). The space $\charvarietynmaximal$ of all maximal surface group representations into $\SO_0(2,n+1)$ (up to conjugation) has a structure of a real analytic (possibly singular) variety and it is a union of connected components (\cite{bradlow2006maximal}). Recently, Collier-Tholozan-Toulisse have found a nice geometric property for such representations, namely they proved that for any $\rho\in\charvarietynmaximal$ there exists a unique $\rho$-equivariant maximal space-like embedding $\varphi:\widetilde\Sg\to\Hyp$ (\cite{collier2019geometry}). Making use of this result, for any maximal representation $\rho$, we construct a \emph{geometric} Riemannian metric on the Zariski tangent space of $\charvarietynmaximal$ at $\rho$, if the representation is a smooth point of the variety. We follow an analogous construction performed for $\SL(3,\R)$ (\cite{li2013teichm}) using hyperbolic affine spheres in $\R^3$, and for $\SO_0(2,2)$ (\cite{tamburelli2021riemannian}) using the embedding of space-like surfaces in globally hyperbolic maximal compact Anti-de Sitter three-manifolds (GHMC for short).
\begin{manualtheorem}A \label{thm:A}
For any maximal representation $\rho:\pi_1(\Sg)\to\SO_0(2,n+1)$ which is also a smooth point in $\charvarietynmaximal$ and for any $n\ge 2$, there exists a scalar product $\g_\rho$ on the Zariski tangent space depending on the unique $\rho$-equivariant maximal space-like embedding $\varphi:\widetilde\Sg\to\Hyp$. In particular, the tensor $\g$ defines a Riemannian metric in the smooth locus of $\charvarietynmaximal$.\end{manualtheorem}
The space $\charvarietynmaximal$ is intimately related to the Higgs bundles theory through the non-abelian Hodge correspondence (\cite{garcia2009hitchin}). In this framework, it has been proven the existence of a decomposition into connected components (\cite{bradlow2006maximal})  $$\charvarietynmaximal=\bigsqcup_{\substack{sw_1\in H^1(\Sg,\Z_2) \\ sw_2\in H^2(\Sg,\Z_2)}}\charvarietynmaximal_{sw_2}^{sw_1} \ ,$$ where $sw_1\in H^1(\Sg,\Z_2)$ and $sw_2\in H^2(\Sg,\Z_2)$ are the topological invariants of the associated Higgs bundle. However, it must be emphasized that such decomposition occurs only when $n>2$, indeed in the case $n=2$, the space $\mathfrak{R}_{2,3}^{\text{max}}(\Sg)^{sw_1}_{sw_2}$ with $sw_1=0$, is no longer connected, but decomposes further into $4g-3$ connected components. Therefore, considering the space of maximal representations for $n=2$ one obtains that 
$$\charvarietymaximal=\bigg(\bigsqcup_{sw_1\neq 0, \ sw_2}\charvarietymaximalstiefel\bigg)\sqcup\bigg(\bigsqcup_{0\le d\le 4g-4}\mathcal{M}_d(\Sg)\bigg) \ ,$$ where we denoted with $\charvarietymaximalstiefel$ the connected components corresponding to $sw_1\neq 0$. It turns out that the spaces labelled by $d\in [0,4g-4]$ are smooth manifolds if $d\neq 0$, hence they carry a well-defined $\g$ according to the above theorem. On the other hand, $\charvarietymaximalstiefel$ and $\mathcal{M}_0(\Sg)$ are singular spaces. Nevertheless, using the explicit expression of the metric $\g$ and the classification of singularities (\cite{alessandrini2019geometry}) we prove the following \begin{manualtheorem}B \label{thm:B}The Riemannian metric $\g$ on $\charvarietymaximalstiefel$ and on $\mathcal{M}_0(\Sg)$ is compatible with all orbifold singularities.\end{manualtheorem}
It is interesting that singularities of the above components correspond to representations whose Zariski closure is contained in a tightly embedded subgroup $G<\SO_0(2,3)$ (\cite{alessandrini2019geometry}). For instance, when $G=\SO_0(2,2)$ we find holonomies of GHMC Anti-de Sitter three-manifolds, whose associated deformation space $\GH(\Sg)$ admits a Riemannian metric $\g_\mathrm T$ defined by Tamburelli (\cite{tamburelli2021riemannian}), and which coincides with $\g|_{\GH(\Sg)}$. Instead, when $G=\SO_0(2,1)\cong\PSL(2,\R)$ we find holonomies of hyperbolic structures, namely points in the Fuchsian locus $\mathcal{F}(\Sg)\cong\mathcal T(\Sg)$. In this regard, with a direct computation we can show that $\g|_{\mathcal F(\Sg)}$ is a constant multiple of the Weil-Petersson metric.\begin{manualtheorem}C \label{thm:C}The space $\big(\GH(\Sg),\g_\mathrm T\big)$ is totally geodesic in $\big(\charvarietymaximalstiefel,\g\big)$ and in $\big(\mathcal{M}_0(\Sg),\g\big)$. Moreover, the Fuchsian locus $\mathcal{F}(\Sg)$ is totally geodesic with respect to the Weil-Petersson metric inside $(\charvarietymaximalstiefel, \g)$.\end{manualtheorem} Regrettably, although we know that $\g$ restricts to (a multiple of) the Weil-Petersson metric in the copy of Teichm\"uller space inside $\mathcal{M}_0(\Sg)$, we cannot conclude that it is totally geodesic since it represents a non-orbifold singularity. As for the connected components $\mathcal{M}_d(\Sg)$, with $d\in(0,4g-4)$, they are the analogous counterpart of Gothen components for $\mathbb P\mathrm{Sp}(4,\R)\cong\SO_0(2,3)$ (\cite{gothen2001components}) and any representation $\rho\in\mathcal{M}_d(\Sg)$ have Zariski dense image, hence there is no subspace to look for. Nevertheless, we explain how they embed and if they represent a smooth or orbifold point in the maximal $\SO_0(2,n+1)$-character variety.  \begin{manualtheorem}D \label{thm:D}The Gothen component $\big(\mathcal{M}_d(\Sg),\g\big)$ is totally geodesic in $\big(\mathfrak{R}^{\text{max}}_{2,n+1}(\Sg)^{sw_1=0}_{sw_2=0},\g\big)$ when $d$ is even and is totally geodesic in $\big(\mathfrak{R}^{\text{max}}_{2,n+1}(\Sg)^{sw_1=0}_{sw_2\neq0},\g\big)$ when $d$ is odd, for $n\ge 3$.\end{manualtheorem}
Similarly, it makes sense to ask a similar question for the spaces $\charvarietymaximalstiefel$ seen as sub-varieties of $\charvarietynmaximal^{sw_1\neq 0}_{sw_2}$. In particular, we obtain the following:
\begin{manualtheorem}E \label{thm:E} 
    The connected components $\big(\charvarietymaximalstiefel,\g\big)$ are totally geodesic sub-varieties of $\big(\charvarietynmaximal^{sw_1\neq 0}_{sw_2},\g\big)$
\end{manualtheorem}
The last result we obtained concerns the $\SO_0(2,3)$-Hitchin component $\Hit(\Sg)$, which corresponds to the connected component $\mathcal{M}_d(\Sg)$ with $d=4g-4$, hence it carries a well-defined $\g$ being smooth. The statement is quite surprising in that it shows a significant difference with the case of $\SL(3,\R)$ (\cite{li2013teichm}). In fact, in our case, the Fuchsian locus embeds as a totally geodesic submanifold in the Hitchin component but, with an explicit computation, we show that $\g$ does not restrict to the Weil-Petersson metric. In this regard, it is worth mentioning two other constructions on the $\SO_0(2,3)$-Hitchin component: the Labourie-Wentworth pressure metric (\cite{labourie2018variations} see also \cite{bridgeman2015pressure}) and the Carvajales-Dai-Pozzetti-Wienhard asymmetric distance (\cite{carvajales2022thurston}), both defined on much more general spaces. Regarding the former, it has been shown that it restricts to the Weil-Petersson metric on the Fuchsian locus, whereas the latter restricts to the so called asymmetric Thurston metric on Teichm\"uller space (\cite{thurston1998minimal}), which is not Riemannian. In agreement with the explicit expression of our metric $\g$ on the Fuchsian locus, developed in Section \ref{sec:4.5}, it can be concluded that $\g$ potentially defines a different Riemannian metric from the one given by Labourie-Wentworth and the one by Carvajales-Dai-Pozzetti-Wienhard being asymmetric.  \\ \\ 
To our knowledge, $\g$ is the first Riemannian metric on $\charvarietymaximalstiefel$ and on $\mathcal{M}_0^{\text{max}}(\Sg)$ that is shown to be compatible with its orbifold structure. It would be interesting to understand its relation to the natural complex structure defined by Alessandrini-Collier (\cite{alessandrini2019geometry}) and to the Goldman symplectic form (\cite{goldman1984symplectic}). The same question applies to the Hitchin (\cite{labourie2017cyclic}) and Gothen (\cite{alessandrini2019geometry}) components.
\subsubsection*{Outline of the paper}In Section \ref{sec:2} we recall the basic concepts that we will need later, including pseudo-hyperbolic space, maximal surfaces in $\Hyp$, the connected component decomposition of $\charvarietynmaximal$ and the role of Higgs bundles. In Sections \ref{sec:3.1} and \ref{sec:3.2} we explain how to construct the Riemannian metric $\g$ on the smooth locus of $\charvarietynmaximal$, using the result on equivariant maximal surfaces in $\Hyp$, from which Theorem \ref{thm:A} will be deduced. In Section \ref{sec:4.2} we recall the classification of singularities in $\charvarietymaximalstiefel$ and $\mathcal{M}_0(\Sg)$ and we prove Theorem \ref{thm:B}. Then, in Section \ref{sec:4.3} we explain how to obtain Theorem \ref{thm:C}, and we explicitly show that $\g$ restricts to a multiple of the Weil-Petersson metric. Moving on, in Section \ref{sec:5.1} we study representations in $\charvarietynmaximal^{sw_1\neq 0}_{sw_2}$ factoring through the sub-variety $\charvarietymaximalstiefel$ and we prove Theorem \ref{thm:E}. In particular, it is needed an analysis of the type of singularities that these representations form. Finally, in Section \ref{sec:4.4} we study the properties of the metric on Gothen components by proving Theorem \ref{thm:D}.
\subsection*{Acknowledgements}
The author is greatful to Andrea Tamburelli for his constant support during the preparation of the paper, and for reading a first draft of this work. The author would also like to thank Brian Collier for useful discussions on the topic that have improved some statements of the main theorems, and the anonymous referee for some comments that enhanced the exposition of the paper.
\section{Background materials}\label{sec:2}
In this section we first recall the definition of pseudo-hyperbolic space and maximal space-like surfaces (\cite{ishihara1988maximal},\cite{collier2019geometry},\cite{labourie2020plateau},\cite{nie2022cyclic},\cite{labourie2023quasicircles}). Then, we introduce maximal surface group representations into $\SO_0(2,n+1)$ and we briefly explain their relation to $\SO_0(2,n+1)$-maximal Higgs bundles through the non-abelian Hodge correspondence (\cite{collier2019geometry}).
\subsection{Pseudo-hyperbolic space and maximal space-like surfaces}\label{sec:2.1}Let $n$ be a non negative integer and let us denote with $\R^{2,n+1}$ the space $\R^{n+3}$ endowed with the following symmetric non-degenerate bi-linear form: $$\langle x,y\rangle:=x_1y_1+x_2y_2-x_3y_3\dots-x_{n+3}y_{n+3},\quad x,y\in\R^{n+3} \ .$$ It is clear from the definition that $\langle\cdot,\cdot\rangle$ has signature equal to $(2,n+1)$. Let us denote with $\bi$ the associated quadratic form and consider the subspace of $\R^{2,n+1}$ consisting of all vectors with norm equal to $-1$, namely $$\Hyptilde:=\{ x\in \R^{2,n+1} \ | \ \bi(x) =-1 \} \ .$$ It is classically known that the tangent space at a point $x\in\Hyptilde$ can be identified with the orthogonal complement $\{\R\cdot x\}^\perp\subset\R^{2,n+1}$ so that the restriction of the indefinite bi-linear form $\langle\cdot,\cdot\rangle$ on such a subspace induces a pseudo-Riemannian metric $g_{\Hyp}$ of signature $(2,n)$, and of constant sectional curvature $-1$.
\begin{definition}The \emph{pseudo-hyperbolic space} of signature $(2,n)$ is defined as $$\Hyp:=\bigslant{\Hyptilde}{\{\pm\mathrm{Id}\}} \ .$$
\end{definition} Given that the elements $\pm\mathrm{Id}$ act on $\Hyptilde$ by isometries, there is a pseudo-Riemannian metric, still denoted with $g_{\Hyp}$, on $\Hyp$ of signature $(2,n)$ and of constant sectional curvature $-1$, induced by the natural quotient projection from $\Hyptilde$ to $\Hyp$, which is a covering of degree $2$. \\ \\ Now let $S$ be a connected smooth surface without boundary, we say that $\varphi:S\rightarrow\Hyp$ is a \emph{space-like} embedding if $\varphi$ is an embedding and the induced metric $g_T:=\varphi^*g_{\Hyp}|_{TS}$ is Riemannian. In particular, the embedded surface is called \emph{complete} if $g_T$ is a complete Riemannian metric. The \emph{normal bundle} $NS$ of the embedding is defined as the $g_{\Hyp}$-orthogonal of $TS$ inside $T\Hyp|_S$ and it inherits a metric $g_N:=\varphi^*g_{\Hyp}|_{NS}$. The pull-back of the Levi-Civita connection $\nabla$ of $g_{\Hyp}$ by $\varphi$ decomposes, according to the splitting $T\Hyp|_S=TS\oplus NS$, as $$\nabla=\begin{pmatrix} \nabla^T & -B \\ \Pi & \nabla^N\end{pmatrix} \ .$$ In the above matrix representation, $\nabla^T$ is the Levi-Civita connection of $g_T$ and $\nabla^N$ is a connection on $NS$ preserving $g_N$ and called the \emph{normal connection}. The tensor $\Pi$ is an element of $\Omega^1(S,\Hom(TS,NS))$ called the \emph{second fundamental form}, while the tensor $B$ is an element of $\Omega^1(S,\Hom(NS,TS))$ called the \emph{shape operator}. They are related by the following equation
\begin{equation}g_N\big(\Pi(X,Y),\xi\big)=g_T\big(Y,B(X,\xi)\big) , \quad X,Y\in\Gamma(TS), \ \xi\in\Gamma(NS) \ .\end{equation}
Moreover, the second fundamental form is \emph{symmetric}, namely $\Pi(X,Y)=\Pi(Y,X)$ for any $X,Y\in\Gamma(TS)$, hence can be seen as an element of $\mathrm{Sym}^2\big(T^*S\big)\otimes NS$.\begin{definition}Let $S\subset\Hyp$ be a space-like surface and let $\{e_1,e_2\}$ be a $g_T$-orthonormal basis of $TS$, then $S$ is called \emph{maximal} if \begin{equation}\label{maximal}\tr_{g_T}\Pi:=\Pi(e_1,e_1)+\Pi(e_2,e_2)=0 \ .\end{equation}\end{definition}

\subsection{The character variety and maximal surface group representations}\label{sec:2.2}
Here we first recall the definition of the Lie group $\SO_0(2,n+1)$, then we introduce its associated character variety and the notion of maximal surface group representations. The material presented here is already known in the literature and we will recall only what is necessary for the purposes of the article. From now on and throughout the rest of the paper, we will denote by $\Sg$ a closed smooth oriented surface of genus $g\ge 2$ and by $\widetilde\Sg$ its universal cover. \\ \\
The Lie group $\SO_0(2,n+1)$ is the identity component of the group of linear transformations of $\R^{n+3}$ preserving $\bi$, which acts in a natural and transitive way on $\Hyp$. In other words, it is the identity component of $$\mathrm{SO}(2,n+1)=\{Q\in\SL(n+3,\R ) \ | \ Q^T\mathrm{I}_{2,n+1}Q=\mathrm{I}_{2,n+1}\} \ ,$$ where $\mathrm{I}_{2,n+1}:=\text{diag}(1,1,-1,\dots,-1)$ is the matrix associated with $\bi$ in an orthonormal basis of $\R^{n+3}$. Now consider the space Hom$(\pi_1(\Sg),\SO_0(2,n+1))$ of all representations from $\pi_1(\Sg)$ to $\SO_0(2,n+1)$. This set has a topology induced by the inclusion $$\mathrm{Hom}(\pi_1(\Sg),\SO_0(2,n+1))\hookrightarrow\SO_0(2,n+1)^{2g}$$ which sends the representation $\rho$ to the t-uple $\big(\rho(a_1),\dots,\rho(b_g)\big)$, where $a_1,\dots,b_g$ are generators of $\pi_1(\Sg)$ subject to the relation $\prod_{i=1}^g\big[a_i,b_i\big]=1$. There is a natural action of $\SO_0(2,n+1)$ on this space given by conjugation: for $\gamma\in\pi_1(\Sg)$ and $Q\in\SO_0(2,n+1)$ $$(Q\cdot\rho)(\gamma):=Q^{-1}\rho(\gamma)Q \ . $$
In order to get a Hausdorff quotient space, one needs to restrict to the \emph{completely reducible} representations, i.e. those $\rho:\pi_1(\Sg)\to\SO_0(2,n+1)$ which split as a direct sum of irreducible representations. Let us denote with $\Hom^+\big(\pi_1(\Sg),\SO_0(2,n+1)\big)$ the space of such representations, then the $\SO_0(2,n+1)$-\emph{character variety} is defined as $$\charvarietyn:=\bigslant{\Hom^+\big(\pi_1(\Sg),\SO_0(2,n+1)\big)}{\SO_0(2,n+1)} \ .$$ The topological space just defined has a structure of real algebraic variety, possibly singular. The Zariski tangent space of $\charvarietyn$ has a nice description at smooth points, which now we are going to recall. Let $\rho:\pi_1(\Sg)\longrightarrow\SO_0(2,n+1)$ be a representation and let us consider its adjoint representation $\Ad\rho$ into $\lso_0(2,n+1)$, namely the Lie algebra of $\SO_0(2,n+1)$. One can define a flat $\lso_0(2,n+1)$-bundle over $\Sg$, with holonomy given by $\Ad\rho$, as follows: $$\lso_0(2,n+1)_{\Ad\rho}:=\bigslant{\big(\widetilde\Sg\times\lso_0(2,n+1)\big)}{\sim} \ ,$$ where $(\widetilde x,v)\sim (\gamma\cdot\widetilde x, \Ad\rho(\gamma)v)$ for any $\gamma\in\pi_1(\Sg)$ and $\widetilde x\in\widetilde\Sg, v\in\lso_0(2,n+1)$. In particular, it makes sense to consider the cohomology group of the surface with values in the flat bundle $\lso_0(2,n+1)_{\Ad\rho}$, and the following classical result is obtained: \begin{theorem}[\cite{goldman1984symplectic}]\label{thm:tangenttocharactervariety}
If $\rho\in\charvarietyn$ is a smooth point of the character variety, then the Zariski tangent space at $\rho$ can be identified with $H^1(\Sg,\lso_0(2,n+1)_{\Ad\rho})$. 
\end{theorem}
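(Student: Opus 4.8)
The plan is to recall Goldman's deformation-theoretic description of the character variety and adapt it to the orthogonal group. First I would set up the cocycle picture: given a smooth point $\rho\in\charvarietyn$, a tangent vector to $\Hom^+(\pi_1(\Sg),\SO_0(2,n+1))$ at $\rho$ is a map $u\colon\pi_1(\Sg)\to\lso_0(2,n+1)$ satisfying the cocycle condition $u(\gamma_1\gamma_2)=u(\gamma_1)+\Ad\rho(\gamma_1)u(\gamma_2)$, obtained by differentiating a smooth path $\rho_t$ of representations with $\rho_0=\rho$; this is exactly a group $1$-cocycle $Z^1(\pi_1(\Sg),\lso_0(2,n+1)_{\Ad\rho})$. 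Then I would observe that two such variations differing by the derivative of a conjugation $t\mapsto \exp(tv)\rho_t\exp(-tv)$ differ by a coboundary $\gamma\mapsto v-\Ad\rho(\gamma)v$, so the tangent space to the quotient is the group cohomology $H^1(\pi_1(\Sg),\lso_0(2,n+1)_{\Ad\rho})$. Finally I would invoke that, since $\Sg$ is a $K(\pi_1(\Sg),1)$, this group cohomology is canonically isomorphic to the de Rham / sheaf cohomology $H^1(\Sg,\lso_0(2,n+1)_{\Ad\rho})$ of the surface with coefficients in the flat bundle.

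The key steps in order: (1) identify $\Hom(\pi_1(\Sg),\SO_0(2,n+1))$ near $\rho$ with its defining equations inside $\SO_0(2,n+1)^{2g}$ coming from the single surface relator $\prod[a_i,b_i]=1$, and compute that the differential of the relator map at $\rho$ has image and kernel governed precisely by the coboundary and cocycle spaces — this is the content of Weil's rigidity computation and shows $Z^1$ is the Zariski tangent space of $\Hom$; (2) check that at a smooth point the orbit map $\SO_0(2,n+1)\to\Hom$, $Q\mapsto Q^{-1}\rho Q$, has differential with image exactly $B^1(\pi_1(\Sg),\lso_0(2,n+1)_{\Ad\rho})$, using that the stabilizer of a completely reducible representation is the centralizer (reductive), so the orbit is a smooth submanifold and the quotient is locally a manifold whose tangent space is $Z^1/B^1=H^1$; (3) pass from group cohomology to surface cohomology via the standard identification for aspherical spaces, or equivalently via the simplicial cochain complex of a one-vertex CW structure on $\Sg$, which literally reproduces the cocycle complex above.

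The main obstacle is step (2), i.e. checking that at a smooth point the quotient construction behaves well — that the $\SO_0(2,n+1)$-orbit of $\rho$ is a locally closed smooth submanifold of $\Hom^+$ of the expected dimension, so that $H^1$ (rather than just $Z^1/B^1$ as a formal quotient) is genuinely the Zariski tangent space of the variety $\charvarietyn$ at $\rho$. This requires knowing that at a smooth point the local structure of $\charvarietyn$ is the GIT quotient of a smooth piece of $\Hom^+$ by a free (up to finite center) action, which is where complete reducibility and the reductivity of the Lie group enter. Since this is the classical theorem of Goldman \cite{goldman1984symplectic}, I would not redo the slice-theorem argument in detail but cite it, and restrict my exposition to recalling the cocycle description so that the later geometric constructions (the scalar product $\g_\rho$ built from the maximal embedding $\varphi$) have a concrete handle on tangent vectors. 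The remaining steps (1) and (3) are standard and I would treat them as routine, referring to Goldman's paper and to the fact that $\Sg$ is aspherical.
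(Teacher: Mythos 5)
Your proposal is correct and matches the paper's treatment: the paper states this as a classical result and simply cites Goldman, while Remark \ref{thm:tangenttocharactervariety} (immediately following) records exactly the point you make in step (3), namely that the group cohomology $H^1(\pi_1(\Sg),\lso_0(2,n+1))$ of Goldman's original formulation is identified with $H^1(\Sg,\lso_0(2,n+1)_{\Ad\rho})$ because $\Sg$ is aspherical. Your cocycle/coboundary setup and the decision to cite Goldman for the slice-theorem step rather than reprove it is precisely the level of detail the paper itself adopts.
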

\begin{remark}
It is worth mentioning that the theorem just stated applies more generally to Lie groups that admit a non-degenerate, symmetric and Ad-invariant bi-linear form on their Lie algebra. Moreover, in the original paper by Goldman (\cite{goldman1984symplectic}) the statement is given in terms of the first group cohomology $H^1\big(\pi_1(\Sg), \lso_0(2,n+1)\big)$, which is in fact isomorphic to $H^1(\Sg,\lso_0(2,n+1)_{\Ad\rho})$ when $\Sg$ is a closed surface of genus $g\ge 2$.
\end{remark}Now, let us shift our attention to a subspace of the representation space that we will study in the following sections. Firstly, let us mention that to any representation $\rho:\pi_1(\Sg)\to\SO_0(2,n+1)$ it is possible to associate an $\mathrm{SO}(2)$-bundle $E_\rho$ over $\Sg$ whose only topological invariant is the Euler class (\cite{collier2019geometry}). Then, the \emph{Toledo invariant} $\tau(\rho)$ is defined as the Euler class of such $\mathrm{SO}(2)$-bundle $E_\rho\to\Sg$ (\cite{burger2010surface},\cite{collier2019geometry}). In particular, we have a map $$\tau:\Hom\big(\pi_1(\Sg),\SO_0(2,n+1)\big)\to\Z$$ which is known to be continuous, hence locally constant, and invariant by conjugation. Therefore, it induces a map at the level of the character variety, still denoted with $\tau$ by abuse of notation. Moreover, for any representation $\rho:\pi_1(\Sg)\to\SO_0(2,n+1)$, the Toledo invariant $\tau$ satisfies the following \emph{Milnor-Wood} inequality (\cite{domic1987gromov}): $$\vl\tau(\rho)\vl\le 2g-2 \ .$$
\begin{definition}\label{def:maximalrep}
A representation $\rho:\pi_1(\Sg)\to\SO_0(2,n+1)$ is \emph{maximal} if $\vl\tau(\rho)\vl=2g-2$.
\end{definition}
The Toledo number is defined in terms of the Euler class of the $\mathrm{SO}(2)$-bundle $E_\rho\to\Sg$, hence it depends on the orientation on the surface. In particular, by taking the opposite orientation, it changes the sign of the Toledo invariant. For this reason, from now one, we will assume that $\tau(\rho)\ge 0$, for any representation $\rho:\pi_1(\Sg)\to\SO_0(2,n+1)$, so that $\rho$ is maximal if $\tau(\rho)=2g-2$. We will denote with $\charvarietynmaximal$ the subspace of the character variety consisting only of maximal representations. It is well known that $\charvarietynmaximal$ is a union of connected components which we are going to describe through the nonabelian Hodge correspondence in the next section.
\subsection{Nonabelian Hodge correspondence}\label{sec:2.3}
Here we introduce the notion of $\SO_0(2,n+1)$-Higgs bundles over a fixed Riemann surface, and we explain how their moduli space is related to the character variety through non-abelian Hodge correspondence (\cite{hitchin1987self,donaldson1987twisted,corlette1988flat,simpson1992higgs,garcia2009hitchin}). There are many references in the literature on this topic, even in more general contexts, for this reason here we will introduce only the necessary notions for our purposes.
\\ \\ Let $X$ be a fixed Riemann surface structure on $\Sg$. We will denote by $K_X$ its canonical bundle, namely the holomorphic cotangent bundle of $X$, and by $\mathcal{O}_X$ the trivial holomorphic bundle. The notion of Higgs bundle was first introduced by Hitchin in the $\SL(2,\C)$ case (\cite{hitchin1987self}), and then generalized by Simpson (\cite{simpson1988constructing}) for a semi-simple complex lie group. 
\begin{definition}
An $\SL(r,\C)$-Higgs bundle on $X$ is a pair $(\mathcal E,\Phi)$, where $\mathcal E$ is a rank $r$ holomorphic vector bundle over $X$ such that $\bigwedge^r\mathcal E\cong\mathcal O_X$, and $\Phi$ is a holomorphic section of $\mathrm{End}(\mathcal{E})\otimes K_X$ such that $\tr(\Phi)=0$.
\end{definition}
To any such pair $(\mathcal E,\Phi)$ one can associate an integer number $d$ called the \emph{degree}. It can be defined as the integral of the first Chern class of $\mathcal{E}$ over $X$, and it can be shown that it coincides with the degree of the associated line bundle $\bigwedge^r\mathcal{E}$. Thus, it is clear that $\SL(r,\C)$-Higgs bundles have $d=0$. Such an integer number serves to introduce an algebraic notion of stability, which we now recall. 
\begin{definition}\label{def:stabilityHiggs}
Let $(\mathcal E,\Phi)$ be an $\SL(r,\C)$-Higgs bundle, then: \begin{itemize}
    \item[$\bullet$] $(\mathcal E,\Phi)$ is \emph{stable} if for any proper subbundle $0\neq\mathcal F\subset\mathcal{E}$ with $\Phi(\mathcal{F})\subset\mathcal F\otimes K_X$, one has $\deg(\mathcal{F})<0$;
    \item[$\bullet$] $(\mathcal E,\Phi)$ is \emph{polystable} if it is a direct sum of $\SL(r_i,\C)$-Higgs bundles $(\mathcal E_i,\Phi_i)$ such that $\deg(\mathcal{E}_i)=0$ for any $i$.
\end{itemize} \end{definition}
In the most general case, the definition of Higgs bundle and stability can also be given for a real reductive Lie group $G$ (\cite{garcia2009hitchin}). When $G=\SO_0(2,n+1)$ (\cite[\S 2.3]{collier2019geometry}), we obtain the following
\begin{definition}
An $\SO_0(2,n+1)$-Higgs bundles over a Riemann surface $X$ is a quint-uple $(\mathcal L,\mathcal V,b_\mathcal V,\beta,\gamma)$ such that: \begin{itemize}
    \item[$\bullet$] $\mathcal V$ is a rank $n+1$ holomorphic bundle over $X$ with a trivialization $\bigwedge^{n+1}\mathcal V\cong\mathcal O_X$, and $b_\mathcal V$ is a non-degenerate holomorphic section of $\Sym^2(\mathcal V^*)$; \item[$\bullet$] $\mathcal L\to X$ is a holomorphic line bundle; \item[$\bullet$] $\gamma\in H^0\big(X,\mathcal L^{-1}\otimes\mathcal V\otimes K_X\big)$ and $\beta\in H^0\big(X, \mathcal L\otimes\mathcal V\otimes K_X\big)$.  
\end{itemize} 
\end{definition}It is interesting to note that, given an $\SO_0(2,n+1)$-Higgs bundle $(\mathcal L,\mathcal V,b_\mathcal V,\beta,\gamma)$ we can construct an $\SL(n+3,\C)$-Higgs bundles over $X$ as follows: $\mathcal E:=\mathcal L\oplus\mathcal V\oplus\mathcal L^{-1}$ and $$\Phi=\begin{pmatrix}
    0 & \beta^\dag & 0 \\ \gamma & 0 & \beta \\ 0 & \gamma^\dag & 0
\end{pmatrix}:\mathcal E\longrightarrow\mathcal E\otimes K_X \ , $$ where $\beta^\dag:=\beta^T\circ b_\mathcal V:\mathcal V\to\mathcal L\otimes K_X$ and $\gamma^\dag:=\gamma^T\circ b_\mathcal V:\mathcal V\to\mathcal L^{-1}\otimes K_X$ are holomorphic sections. In particular, we will say that an $\SO_0(2,n+1)$-Higgs bundle is polystable if and only if the associated $\SL(n+3,\C)$-Higgs bundle is polystable according to Definition \ref{def:stabilityHiggs}. Polystability for a Higgs bundle is equivalent to the existence of a Hermitian metric, compatible with the holomorphic structure, that satisfies some gauge theoretic equations, known in the literature as \emph{self-duality equations}. 
In particular, the Hermitian connection on the polystable $\SO_0(2,n+1)$-Higgs bundle induces a flat connection on the associated $\SL(n+3,\C)$-Higgs bundle, whose holonomy gives a completely reducible representation $\rho:\pi_1(\Sg)\to\SO_0(2,n+1)$.
\begin{proposition}[\cite{collier2019geometry}]\label{prop:polystableHiggs}
If $(\mathcal L,\mathcal V,b_\mathcal V,\beta,\gamma)$ is polystable, then $\deg\mathcal L=\tau(\rho)$, where $\rho:\pi_1(\Sg)\to\SO_0(2,n+1)$ is the associated completely reducible representation and $\tau(\rho)$ is its Toledo invariant. In particular, if $\deg\mathcal L=2g-2$, then: \begin{itemize}
    \item[$\bullet$] there is a holomorphic $b_\mathcal V$-orthogonal decomposition $\mathcal V=\mathcal I\oplus\mathcal W$, with $\mathcal W$ a rank $n$ holomorphic bundle over $X$, $\mathcal I\cong\bigwedge^n\mathcal W$ and $\mathcal I^2\cong\mathcal O_X$; \item[$\bullet$] $\mathcal L\cong\mathcal I\otimes K_X$;
    \item[$\bullet$] $\gamma\cong\begin{pmatrix}
        1 \\ 0
    \end{pmatrix}$ and $\beta=\begin{pmatrix}
        q_2 \\ \beta_0
    \end{pmatrix}:\mathcal K_X^{-1}\otimes\mathcal I\to\big(\mathcal I\otimes K_X\big)\oplus\big(\mathcal W\otimes K_X\big)$, with $q_2\in H^0\big(X,K_X^2\big)$ and $\beta_0\in H^0\big(X,\mathcal W\otimes\mathcal I\otimes K_X^2\big)$.
\end{itemize}
\end{proposition}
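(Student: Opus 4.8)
The statement splits into the identity $\deg\mathcal L=\tau(\rho)$ and the structural description in the maximal case, and I would treat the two separately. For the first, the plan is to read $\tau(\rho)$ off directly from the non-abelian Hodge correspondence: the harmonic metric solving the self-duality equations for a polystable $\SO_0(2,n+1)$-Higgs bundle yields a reduction of the associated flat $\SO_0(2,n+1)$-bundle to the maximal compact $\SO(2)\times\SO(n+1)$, and in the model $\mathcal E=\mathcal L\oplus\mathcal V\oplus\mathcal L^{-1}$ the line bundle $\mathcal L$ is precisely the complexification of the $\SO(2)$-factor of this reduction, i.e.\ of the circle bundle $E_\rho$ whose Euler number is by definition $\tau(\rho)$. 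Hence $\deg\mathcal L=e(E_\rho)=\tau(\rho)$. This is part of the standard dictionary of topological invariants under non-abelian Hodge, and I would simply cite \cite{garcia2009hitchin,collier2019geometry}, paying attention only to normalization conventions.

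For the structural part, assume $\deg\mathcal L=2g-2=\deg K_X$. The whole argument turns on the bundle map $\gamma\colon\mathcal L\to\mathcal V\otimes K_X$, equivalently $\gamma\colon\mathcal L\otimes K_X^{-1}\to\mathcal V$, analyzed through polystability of the associated $\SL(n+3,\C)$-Higgs bundle $(\mathcal E,\Phi)$: since $\deg\mathcal E=0$, every $\Phi$-invariant subbundle has non-positive degree. The first step is $\gamma\neq0$, for otherwise $\mathcal L\subset\mathcal E$ is itself $\Phi$-invariant, contradicting polystability since $\deg\mathcal L=2g-2>0$. The second, crucial step is that $\gamma$ is not $b_\mathcal V$-isotropic, i.e.\ the holomorphic section $\gamma^\dag\circ\gamma=b_\mathcal V(\gamma,\gamma)\in H^0(X,\mathcal L^{-2}\otimes K_X^2)$ is nonzero: if it vanished identically, the saturation $\mathcal S\subset\mathcal V$ of the line subsheaf $\gamma(\mathcal L\otimes K_X^{-1})$ would be an isotropic line subbundle annihilated by $\gamma^\dag$, so $\mathcal L\oplus\mathcal S$ would be $\Phi$-invariant; but a nonzero sheaf map $\mathcal L\otimes K_X^{-1}\to\mathcal S$ forces $\deg\mathcal S\geq\deg(\mathcal L\otimes K_X^{-1})=0$, giving $\deg(\mathcal L\oplus\mathcal S)\geq 2g-2>0$, again impossible. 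Consequently $b_\mathcal V(\gamma,\gamma)$ is a nonzero holomorphic section of the degree-zero line bundle $\mathcal L^{-2}\otimes K_X^2$, so $\mathcal L^{-2}\otimes K_X^2\cong\mathcal O_X$ and the section is nowhere vanishing, which in turn forces $\gamma$ itself to vanish nowhere.

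With $\gamma$ nowhere vanishing and non-isotropic, the decomposition is essentially formal. Set $\mathcal I:=\mathcal L\otimes K_X^{-1}$; then $\mathcal I^2\cong\mathcal O_X$ and $\mathcal L\cong\mathcal I\otimes K_X$ by construction. Via $\gamma$, $\mathcal I$ is a holomorphic line subbundle of $\mathcal V$ on which $b_\mathcal V$ restricts to a nondegenerate pairing (this is exactly the nonvanishing of $b_\mathcal V(\gamma,\gamma)$), so $\mathcal V$ splits holomorphically and $b_\mathcal V$-orthogonally as $\mathcal V=\mathcal I\oplus\mathcal W$, with $\mathcal W$ the $b_\mathcal V$-orthogonal complement, a rank-$n$ holomorphic bundle inheriting a nondegenerate orthogonal structure, and in this splitting $\gamma$ becomes $(1,0)^T$. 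The trivialization $\bigwedge^{n+1}\mathcal V\cong\mathcal O_X$ then gives $\bigwedge^n\mathcal W\cong\mathcal I^{-1}\cong\mathcal I$. Finally, $\mathcal L\otimes\mathcal V\otimes K_X\cong(\mathcal I^2\otimes K_X^2)\oplus(\mathcal W\otimes\mathcal I\otimes K_X^2)\cong K_X^2\oplus(\mathcal W\otimes\mathcal I\otimes K_X^2)$, and decomposing $\beta\in H^0(X,\mathcal L\otimes\mathcal V\otimes K_X)$ accordingly yields $\beta=(q_2,\beta_0)^T$ with $q_2\in H^0(X,K_X^2)$ and $\beta_0\in H^0(X,\mathcal W\otimes\mathcal I\otimes K_X^2)$, as claimed.

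The step I expect to be the real obstacle is the non-isotropy of $\gamma$: one must produce the correct $\Phi$-invariant destabilizing subbundle, which in the orthogonal setting means checking that the saturation of $\gamma(\mathcal L\otimes K_X^{-1})$ stays isotropic and that $\gamma^\dag$ kills it, so that adjoining only $\mathcal L$ and $\mathcal S$ (and not $\mathcal L^{-1}$) already gives $\Phi$-invariance, and then running the degree count. The strict inequality $2g-2>0$ is what makes everything go through cleanly; in particular it removes any delicacy about the polystable-but-not-stable borderline $\deg=0$.
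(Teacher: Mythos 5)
Your reconstruction is correct: the identification $\deg\mathcal L=\tau(\rho)$ via the Cartan/maximal-compact dictionary, the two destabilization arguments ($\gamma\neq 0$, then non-isotropy of $\gamma$ via the $\Phi$-invariant subbundle $\mathcal L\oplus\mathcal S$ with $\mathcal S$ the saturation of $\gamma(\mathcal L\otimes K_X^{-1})$ killed by $\gamma^\dag$), and the formal orthogonal splitting $\mathcal V=\mathcal I\oplus\mathcal W$ with $\mathcal I=\mathcal L\otimes K_X^{-1}$ all go through, and you correctly flag the non-isotropy step as the crux. The present paper does not prove this proposition but quotes it from Collier--Tholozan--Toulisse, and your argument is essentially the one given there, so there is nothing to compare beyond noting that your blind reconstruction agrees with the source.
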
 The Higgs bundles of Proposition \ref{prop:polystableHiggs} with $\deg\mathcal L=2g-2$ are called \emph{maximal}, and they are determined by a quadruple $(\mathcal W,b_\mathcal W,q_2,\beta_0)$, where $b_\mathcal W$ is the restriction of $b_\mathcal V$ to $\mathcal W$. The main result needed for our purposes and concerning these newly defined objects is the following:
\begin{proposition}[\cite{bradlow2006maximal}]
The two topological invariants of a polystable maximal $\SO_0(2,n+1)$-Higgs bundle $(\mathcal W,b_\mathcal W,q_2,\beta_0)$ are the first and second Stiefel-Whitney class $sw_1\in H^1(\Sg,\Z_2), sw_2\in H^2(\Sg,\Z_2)$ of $\mathcal W$.
\end{proposition}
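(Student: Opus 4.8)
The plan is to reduce the topological classification of polystable maximal $\SO_0(2,n+1)$-Higgs bundles to the classical classification of complex orthogonal bundles over a closed surface, which in turn is pure obstruction theory for real vector bundles. First I would invoke the structure theorem (Proposition~\ref{prop:polystableHiggs}): a maximal Higgs bundle is the same datum as a quadruple $(\mathcal W,b_\mathcal W,q_2,\beta_0)$ in which $(\mathcal W,b_\mathcal W)$ is a rank-$n$ holomorphic bundle with a non-degenerate quadratic form, and all the remaining holomorphic bundles are built functorially from $\mathcal W$ and the fixed canonical bundle, namely $\mathcal I\cong\bigwedge^n\mathcal W$, $\mathcal L\cong\mathcal I\otimes K_X$, $\mathcal V\cong\mathcal I\oplus\mathcal W$ and $\mathcal E\cong\mathcal L\oplus\mathcal V\oplus\mathcal L^{-1}$. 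Since the Higgs field $(q_2,\beta_0)$ varies in a fixed complex vector space it cannot affect the set of connected components, and the rank-two summand $\mathcal L\oplus\mathcal L^{-1}$ is topologically rigid once the maximality condition $\deg\mathcal L=2g-2$ is imposed. So the topological type of the whole $\SO_0(2,n+1)$-Higgs bundle is carried by $(\mathcal W,b_\mathcal W)$ alone. One point to pin down here is that $\mathcal W$ is canonically attached to the Higgs bundle, not an artefact of the splitting: $\mathcal I$ is the line sub-bundle of $\mathcal V$ spanned by the nowhere-vanishing section $\gamma$, and $\mathcal W=\mathcal I^{\perp_{b_\mathcal V}}$, so its isomorphism class — hence its characteristic classes — descends to the moduli space.

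Next I would forget the holomorphic structure and regard $(\mathcal W,b_\mathcal W)$ as a smooth principal $\mathrm O(n,\C)$-bundle over $\Sg$. Since $\mathrm O(n,\C)$ deformation retracts onto its maximal compact subgroup $\mathrm O(n,\R)$ (polar decomposition), one has $B\mathrm O(n,\C)\simeq B\mathrm O(n,\R)$, so smooth $\mathrm O(n,\C)$-bundles over $\Sg$ correspond bijectively to rank-$n$ real Euclidean vector bundles over $\Sg$. On the two-dimensional CW complex $\Sg$ these are controlled by obstruction theory: the primary obstruction lies in $H^1(\Sg;\pi_0\mathrm O(n))=H^1(\Sg;\Z_2)$ and is the first Stiefel--Whitney class, and once it is fixed the secondary one lies in $H^2(\Sg;\pi_1\mathrm{SO}(n))$. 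For $n\ge3$ one has $\pi_1\mathrm{SO}(n)=\Z_2$ and the bundle is determined by the pair $(w_1,w_2)\in H^1(\Sg;\Z_2)\times H^2(\Sg;\Z_2)$; setting $sw_i:=w_i(\mathcal W)$ produces homotopy invariants, hence locally constant functions on the moduli space, and every pair is realised by choosing a compatible holomorphic structure on a prescribed smooth orthogonal bundle together with a generic $q_2$ and $\beta_0=0$, which is easily checked to be polystable. For $n=2$ the group $\pi_1\mathrm{SO}(2)=\Z$ is infinite and, when $sw_1=0$, the bundle $\mathcal W$ carries an integer Euler class refining $sw_2$; this is exactly the phenomenon responsible for the further $4g-3$ components recorded in the introduction, while for $sw_1\neq0$ the pair $(sw_1,sw_2)$ again suffices.

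The serious content — that for $n\ge3$ the pair $(sw_1,sw_2)$ is a \emph{complete} invariant, i.e.\ the locus of maximal $\SO_0(2,n+1)$-Higgs bundles with prescribed $(sw_1,sw_2)$ is connected — is where I expect the main obstacle to lie, and it is precisely the theorem established in \cite{bradlow2006maximal}. The natural route is a Bott--Morse study of the Hitchin function on the moduli space (the squared $L^2$-norm of the Higgs field): one checks that it is proper and that its downward gradient flow retracts the moduli space onto the critical set, identifies the minimal critical submanifold for the maximal locus, and shows that, once the Stiefel--Whitney data are fixed, this submanifold is connected and of the expected topological type, so that no further discrete invariant can survive. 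This deformation-theoretic step is what makes the proposition non-trivial; everything preceding it is formal bookkeeping with structure groups and obstruction classes, and in the final writeup I would simply quote \cite{bradlow2006maximal} for the connectedness.
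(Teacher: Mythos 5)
The paper does not prove this proposition: it is quoted verbatim from \cite{bradlow2006maximal} as background, so there is no internal argument to compare yours against. Judged on its own terms, your reconstruction is correct and is essentially the standard one. The reduction via Proposition \ref{prop:polystableHiggs} to the orthogonal bundle $(\mathcal W,b_\mathcal W)$, the observation that $\mathcal W=\mathcal I^{\perp_{b_{\mathcal V}}}$ with $\mathcal I=\Imm(\gamma)$ is canonical, the homotopy equivalence $\mathrm O(n,\C)\simeq\mathrm O(n)$, and the obstruction-theoretic classification of rank-$n$ orthogonal bundles over a surface by $w_1\in H^1(\Sg,\Z_2)$ and a secondary class in $H^2(\Sg,\pi_1\SO(n))$ — including the $n=2$ anomaly where $\pi_1\SO(2)=\Z$ produces the extra integer invariant $d$ — is exactly the right bookkeeping, and you correctly locate the genuinely hard input (properness of the Hitchin function and connectedness of the minimal critical locus) in \cite{bradlow2006maximal}. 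Two small remarks. First, the proposition as stated only asserts \emph{which} classes are the topological invariants of $(\mathcal W,b_\mathcal W,q_2,\beta_0)$; completeness, i.e.\ that fixed $(sw_1,sw_2)$ cuts out a single connected component for $n>2$, is the separate Theorem \ref{thm:decompositioncomponents}, so your third paragraph proves more than is asked. Second, your realizability claim via ``generic $q_2$ and $\beta_0=0$'' needs a word of care: with $\beta_0=0$ the summand $(\mathcal W,b_\mathcal W)$ splits off as a degree-zero $\Phi$-invariant piece, so one must choose $\mathcal W$ to be a polystable orthogonal bundle in the prescribed topological class; this is possible but not automatic, and in any case realizability is again not part of the statement being proved.
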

In general, one can define the so-called \emph{gauge group} $\mathcal G$ acting on the space of polystable $G$-Higgs bundles, with $G$ a real reductive group (\cite[\S 2.2]{alessandrini2019geometry}). In our case, this allows us to define a moduli space for such objects as: $$\Higgsn:=\bigslant{\{\text{polystable} \ \SO_0(2,n+1)\text{-Higgs bundles over} \ X\}}{\mathcal G} \ .$$
\begin{theorem}[Nonabelian Hodge correspondence]\label{thm:nonabelianHodge}
    Let $\Sigma$ be a smooth closed oriented surface of genus $g\ge 2$, then for each choice of a Riemann surface structure $X$ on $\Sg$, there is a real analytic isomorphism between the moduli space $\Higgsn$ of polystable $\SO_0(2,n+1)$-Higgs bundles on $X$ and the $\SO_0(2,n+1)$-character variety $\charvarietyn$. 
\end{theorem}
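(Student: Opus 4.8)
The statement is the nonabelian Hodge correspondence specialized to the real form $\SO_0(2,n+1)$, so the plan is to assemble it from the two classical halves---Hitchin--Simpson, from polystable Higgs bundles to flat connections, and Corlette--Donaldson, from reductive representations to equivariant harmonic maps---while carefully tracking the real-group data through both constructions. The first move is to pass to the complexification: to a polystable $\SO_0(2,n+1)$-Higgs bundle $(\mathcal L,\mathcal V,b_{\mathcal V},\beta,\gamma)$ one associates, exactly as in the excerpt, the $\SL(n+3,\C)$-Higgs bundle $(\mathcal E,\Phi)$ with $\mathcal E=\mathcal L\oplus\mathcal V\oplus\mathcal L^{-1}$, which by definition is polystable; conversely, an $\SO_0(2,n+1)$-Higgs bundle is precisely an $\SL(n+3,\C)$-Higgs bundle carrying a non-degenerate symmetric pairing (inducing the signature $(2,n+1)$ after the harmonic metric), with respect to which $\Phi$ is symmetric, together with the prescribed isotropic splitting $\mathcal L\oplus\mathcal V\oplus\mathcal L^{-1}$. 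So the theorem follows once one has the $\SL(n+3,\C)$-correspondence for a fixed $X$ together with a dictionary between these ``extra real structures'' on the two sides; this is the content of the general theory of $G$-Higgs bundles for real reductive $G$ (\cite{garcia2009hitchin}), and I would organize the proof around it.

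For the direction from Higgs bundles to representations: polystability of $(\mathcal E,\Phi)$ is equivalent, by the Hitchin--Simpson existence theorem (\cite{hitchin1987self,simpson1992higgs}), to the existence of a Hermitian metric $h$ on $\mathcal E$ solving the self-duality equations $F_{\nabla^h}+[\Phi,\Phi^{*_h}]=0$, where $\nabla^h$ is the Chern connection. The connection $D=\nabla^h+\Phi+\Phi^{*_h}$ is then flat, and its holonomy $\rho=\hol(D)\colon\pi_1(\Sg)\to\SL(n+3,\C)$ is completely reducible. The point is to check that $\rho$ actually takes values in $\SO_0(2,n+1)$: one shows that the harmonic metric $h$ may be chosen compatible with the orthogonal structure $b_{\mathcal V}$ and with the reality structure coming from the $\SO_0(2,n+1)$-type of the bundle, so that $D$ preserves a flat real bi-linear form; the holomorphic $b_{\mathcal V}$-orthogonal decomposition $\mathcal V=\mathcal I\oplus\mathcal W$ and the normalization $\deg\mathcal L=2g-2>0$ recorded in Proposition \ref{prop:polystableHiggs} pin down the signature as $(2,n+1)$ and force the holonomy into the identity component. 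This gives a well-defined map $\Higgsn\to\charvarietyn$ after passing to the quotients by the gauge group $\mathcal G$ and by conjugation, respectively.

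For the converse: given a completely reducible $\rho\colon\pi_1(\Sg)\to\SO_0(2,n+1)$, the Corlette--Donaldson theorem (\cite{corlette1988flat,donaldson1987twisted}) provides a $\rho$-equivariant harmonic map $\widetilde\Sg\to\SO_0(2,n+1)/(\SO(2)\times\SO(n+1))$, unique up to the finite-dimensional symmetries of $\rho$. Fixing the Riemann surface structure $X$, the Koszul--Malgrange holomorphic structure on the associated flat bundle together with the $(1,0)$-component of the derivative of the harmonic map produces a holomorphic bundle with Higgs field; the harmonicity equation is exactly the Higgs bundle equation, so the pair is polystable, and the $\SO_0(2,n+1)$-structure forces it to reassemble into a quintuple $(\mathcal L,\mathcal V,b_{\mathcal V},\beta,\gamma)$ of the required form. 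One then checks that the two constructions are mutually inverse---up to $\mathcal G$ on the Higgs side and conjugation on the representation side---which yields the bijection, and that the identification is real analytic, using the real-analytic dependence of the solutions of the self-duality equations on the moduli parameters (Hitchin's analysis), or equivalently the real-analyticity of both moduli spaces and of the two constructions. I expect the main obstacle to lie not in the quoted analytic existence theorems but precisely in this real-form bookkeeping: verifying that the harmonic metric respects $b_{\mathcal V}$ and the reality structure, that the resulting flat connection has holonomy in the identity component of $\SO(2,n+1)$ rather than in a different real form of $\SO(n+3,\C)$, and dually that the harmonic-map data of an $\SO_0(2,n+1)$-representation recombine into exactly the quintuple structure rather than into a more general $\SL(n+3,\C)$-Higgs bundle.
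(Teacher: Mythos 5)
The paper gives no proof of this statement: it is quoted from the literature (the general $G$-Higgs bundle correspondence of Garc\'ia-Prada et al.\ for real reductive groups, built on Hitchin--Simpson and Corlette--Donaldson), and your sketch is exactly the standard argument from those references, so it is consistent with what the paper relies on. One small correction: the normalization $\deg\mathcal L=2g-2$ from Proposition \ref{prop:polystableHiggs} pertains only to \emph{maximal} Higgs bundles, whereas the theorem concerns all polystable ones; the signature $(2,n+1)$ and the landing in the identity component are already encoded in the structure-group data of the quintuple (the orthogonal structure $b_{\mathcal V}$ and the isotropic splitting $\mathcal L\oplus\mathcal V\oplus\mathcal L^{-1}$), so maximality should not be invoked at that step.
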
Although we only gave the definition of the moduli space for $\SO_0(2,n+1)$, it must be pointed out that the above result has been proven for a general real reductive Lie group $G$ (\cite{garcia2009hitchin}). In particular, one can go back and forth between results regarding representations and Higgs bundles. For this reason, using the topological invariants of polystable $\SO_0(2,n+1)$-Higgs bundles, we get a connected components decomposition for the maximal representation space \begin{theorem}[\cite{bradlow2006maximal}]\label{thm:decompositioncomponents} 
For any $n>2$ the characteristic classes $sw_1\in H^1(\Sg,\Z_2)$ and $sw_2\in H^2(\Sg,\Z_2)$ distinguish connected components in the moduli space of maximal polystable $\SO_0(2,n+1)$-Higgs bundles. In particular, they induce a decomposition for the space $\charvarietynmaximal$ as follows: $$\bigsqcup_{\substack{sw_1\in H^1(\Sg,\Z_2) \\ sw_2\in H^2\Sg,\Z_2)}}\charvarietynmaximal_{sw_2}^{sw_1} \ ,$$ where each $\charvarietynmaximal_{sw_2}^{sw_1}$ denotes the set of maximal representations such that the Stiefel-Whitney classes of $\mathcal W$ are $sw_1$ and $sw_2$.
\end{theorem}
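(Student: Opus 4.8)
The plan is to prove the statement at the level of Higgs bundles — that the assignment $(\mathcal W, b_\mathcal W, q_2, \beta_0)\mapsto\big(sw_1(\mathcal W), sw_2(\mathcal W)\big)$ induces a bijection between the set of connected components of the moduli space $\mathcal M^{\mathrm{max}}$ of maximal polystable $\SO_0(2,n+1)$-Higgs bundles on $X$ and $H^1(\Sg,\Z_2)\times H^2(\Sg,\Z_2)$ — and then transport it to $\charvarietynmaximal$ through the non-abelian Hodge correspondence of Theorem \ref{thm:nonabelianHodge}, which, being a real analytic isomorphism, identifies the two sets of connected components. Everything reduces to showing that this map is locally constant, surjective, and has connected fibers.

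Local constancy uses Proposition \ref{prop:polystableHiggs}: a maximal Higgs bundle is determined by the quadruple $(\mathcal W, b_\mathcal W, q_2, \beta_0)$, where $(\mathcal W, b_\mathcal W)$ is a rank-$n$ holomorphic orthogonal bundle, and the underlying smooth $O(n,\C)$-bundle varies continuously in families, so its topological type is constant on connected components. This is exactly where $n>2$ enters: smooth orthogonal bundles of rank $\ge 3$ over a closed surface are classified by $sw_1\in H^1(\Sg,\Z_2)$ and $sw_2\in H^2(\Sg,\Z_2)$, whereas for rank $\le 2$ this fails — an $SO(2,\C)$-bundle carries an integral degree whose reduction mod $2$ is $sw_2$, and this is the source of the further splitting into $4g-3$ pieces when $n=2$. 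Hence $\mathcal M^{\mathrm{max}}$ is the disjoint union of the open-and-closed subsets indexed by $(sw_1,sw_2)$, and it remains to prove each one non-empty and connected.

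For connectedness I would use the Hitchin function $f\colon\mathcal M^{\mathrm{max}}\to\R_{\ge 0}$, $f([(\mathcal E,\Phi)])=\|\Phi\|_{L^2}^2$ taken in the harmonic metric: it is proper and bounded below, so the standard Morse-theoretic argument (Hitchin, in the form systematized by Bradlow--Garc\'ia-Prada--Gothen) shows that the number of connected components of $\mathcal M^{\mathrm{max}}$ equals that of the subvariety of local minima of $f$. One then identifies this minimal subvariety: where $\mathcal W$ is polystable as an orthogonal bundle the minimum is attained exactly at $q_2=0$, $\beta_0=0$, which forces the Higgs field into the rigid normal form $\gamma=(1,0)$, $\beta=0$, so that the Higgs bundle splits as the direct sum of a Fuchsian-type $\mathrm{SO}_0(2,1)$-Higgs bundle (determined by $\det\mathcal W$ and $K_X$) and $(\mathcal W,0)$; thus the minimal subvariety is isomorphic to the moduli space of polystable $O(n,\C)$-bundles over $X$. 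Since for rank $\ge 3$ this moduli space realizes every topological type $(sw_1,sw_2)$ and is connected within each such type — two classical facts — this gives both non-emptiness and connectedness of every piece, hence $\pi_0(\mathcal M^{\mathrm{max}})\cong H^1(\Sg,\Z_2)\times H^2(\Sg,\Z_2)$.

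The main obstacle is the precise identification of the minimal subvariety of $f$. It rests on the analysis of the deformation complex at a critical point — showing that a local minimum is a fixed point of the $\C^*$-action on which the index computation of Hitchin forces $q_2=\beta_0=0$ — and, in the borderline cases where $\mathcal W$ fails to be semistable, one must argue separately that the critical loci where the minimizing Higgs field is not in Fuchsian form still carry a constant topological type of $\mathcal W$, so connectedness within a fixed $(sw_1,sw_2)$ is unaffected. The rigidity coming from maximality — the Cayley-type description of Proposition \ref{prop:polystableHiggs} — is what keeps this analysis manageable, while properness of the Hitchin function and connectedness of the moduli of orthogonal bundles are standard inputs.
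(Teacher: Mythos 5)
This theorem is stated in the paper only as a cited result from \cite{bradlow2006maximal}, with no proof given, and your outline faithfully reconstructs the argument of that reference: local constancy of $(sw_1,sw_2)$ because smooth orthogonal bundles of rank $n\ge 3$ are classified by their Stiefel--Whitney classes (the failure of which at $n=2$ explains the extra components), properness of the Hitchin function reducing the component count to the subvariety of local minima, and identification of that subvariety with the moduli of polystable rank-$n$ orthogonal bundles, which is non-empty and connected for each topological type. Your proposal is correct and takes essentially the same approach as the paper's source.
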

Each space in the decomposition of Theorem \ref{thm:decompositioncomponents} is non-empty and connected, and there are a total of $2^{2g+1}$ components. In the case of $\SO_0(2,3)$ the decomposition is slightly different and will be addressed in Section \ref{sec:4.1}.
\section{A Riemannian metric for $\SO_0(2,n+1)$ maximal representations}
In Section \ref{sec:3.1}, for any maximal representation into $\SO_0(2,n+1)$, we present a construction of a non-degenerate scalar product on the first cohomology group of the surface with values in the associate flat bundle. Then, in Section \ref{sec:3.2}, we show that such a pairing is geometric, in the sense that its definition relies on the theory of maximal surfaces in $\Hyp$. 
\subsection{Definition of the metric}\label{sec:3.1}
According to Theorem \ref{thm:tangenttocharactervariety}, if $\rho\in\charvarietyn$ is a smooth point then it is sufficient to define a metric on $H^1\big(\Sg,\lso_0(2,n+1)_{\Ad\rho}\big)$. In order to do this, it is sufficient to have chosen a scalar product $\iota$ on $\lso_0(2,n+1)_{\Ad\rho}$ and a metric $h$ on $\Sg$. In fact, let us assume for a moment that this is the case, then a Riemannian metric on the first cohomology group follows from a variant of Hodge theory that we now recall (see \cite{raghunathan1972discrete}). The metric $h$ and the orientation on $\Sg$ allow us to define a scalar product $(\cdot,\cdot)_h$ on the space of smooth $k$-forms $\Omega^k(\Sg)$, and an Hodge-star operator $$\ast:\Omega^k(\Sg)\longrightarrow\Omega^{2-k}(\Sg) \ ,$$ by imposing $\alpha\wedge(\ast\beta)=(\alpha,\beta)_h\mathrm d\mathrm{Vol}_h$, where $\mathrm d\mathrm{Vol}_h$ denotes the area form on $\Sg$ induced by $h$. In addition, the choice of the scalar product $\iota$ enables us to define a bi-linear pairing $g$ on the space of $\lso_0(2,n+1)_{\Ad\rho}$-valued $1$-forms as follows: \begin{equation}\label{eq:definitiong}
    g\big(\sigma\otimes\phi, \sigma'\otimes\phi'\big):=\int_\Sg\iota(\phi,\phi')\sigma\wedge(\ast\sigma') \ , 
\end{equation}where $\sigma,\sigma'\in\Omega^1(\Sg)$ are $1$-forms on the surface and $\phi,\phi'\in\Gamma(\lso_0(2,n+1)_{\Ad\rho})$ are smooth sections of the flat bundle.\newline Now, given $\rho\in\charvarietyn$ one can consider the associated \emph{contragradient} representation $\rho^*:\pi_1(\Sg)\to\SO_0(2,n+1)$ defined as $$\big(\rho^*(\gamma)\cdot T\big)(v):=T\big(\rho(\gamma)^{-1}\cdot v\big) \ ,$$ for every $v\in\R^{n+3}$ and $T\in\big(\R^{n+3}\big)^*\equiv\Hom(\R^{n+3},\R)$. In particular, it is possible to consider the flat bundle $\lso_0(2,n+1)_{\Ad\rho^*}$ whose fibre turns out to be the dual fibre of $\lso_0(2,n+1)_{\Ad\rho}$. The inner product $\iota$ induces a bundle isomorphism $$\#:\lso_0(2,n+1)_{\Ad\rho}\longrightarrow\lso_0(2,n+1)_{\Ad\rho^*}$$ defined by $(\#A)(B):=\iota(A,B)$ for $A,B\in\lso_0(2,n+1)_{\Ad\rho}$. This can be naturally extended to an isomorphism at the level of bundle-valued $k$-forms on the surface $$\#:\Omega^k\big(\Sg, \lso_0(2,n+1)_{\Ad\rho}\big)\longrightarrow\Omega^k\big(\Sg, \lso_0(2,n+1)_{\Ad\rho^*}\big) \ .$$ By considering the induced exterior derivative $\mathrm d$ and the induced Hodge-star operator $\ast$ on $\Omega^k\big(\Sg, \lso_0(2,n+1)_{\Ad\rho}\big)$, we can introduce a co-boundary operator $$\delta:\Omega^k\big(\Sg, \lso_0(2,n+1)_{\Ad\rho}\big)\longrightarrow\Omega^{k-1}\big(\Sg, \lso_0(2,n+1)_{\Ad\rho}\big)$$ by setting $\delta:=-(\#)^{-1}\ast^{-1}\mathrm d\ast\#$, and a Laplacian operator $$\Delta:\Omega^k\big(\Sg, \lso_0(2,n+1)_{\Ad\rho}\big)\longrightarrow\Omega^k\big(\Sg, \lso_0(2,n+1)_{\Ad\rho}\big) \ ,$$ by setting $\Delta:=\delta\mathrm d+\mathrm d\delta$. According to the above construction, an $\lso_0(2,n+1)_{\Ad\rho}$-valued $1$-form $\eta$ is said to be $\Delta$-harmonic (\emph{harmonic} for short) if $\Delta\eta=0$, which is equivalent to $d\eta=\delta\eta=0$. As for the classical Hodge theory, there is an orthogonal decomposition: $$\Omega^1\big(\Sg, \lso_0(2,n+1)_{\Ad\rho}\big)=\Ker(\Delta)\oplus\Im(\delta)\oplus\Im(d)$$ and every cohomology class contains a unique harmonic representative. In other words, there is an isomorphism $H^1\big(\Sg,\lso_0(2,n+1)_{\Ad\rho}\big)\cong\mathrm{Ker}(\Delta)$. Thus, the bi-linear pairing $g$ induces a scalar product in cohomology as follows: \begin{equation}\label{eq:definitiongoncohomology}\g\big([\alpha],[\beta]\big):=g(\alpha_{\text{harm}},\beta_{\text{harm}}) \ ,\qquad [\alpha],[\beta]\in H^1\big(\Sg,\lso_0(2,n+1)_{\Ad\rho}\big),\end{equation} where $\alpha_{\text{harm}}$ and $\beta_{\text{harm}}$ are the harmonic representatives of $\alpha$ and $\beta$.
\subsection{Relation with maximal space-like surfaces in \texorpdfstring{$\Hyp$}{H^{2,n}}}\label{sec:3.2}
Recall that if $\rho\in\charvarietynmaximal$ is a smooth point, then the Zariski tangent space $T_{[\rho]}\charvarietynmaximal$ is identified with $H^1\big(\Sg,\lso_0(2,3)_{\Ad\rho}\big)$ (see Theorem \ref{thm:tangenttocharactervariety}). In Section \ref{sec:3.1} we explained how a Riemannian metric can be defined on the above first cohomology group, depending on the choice of a metric $h$ on $\Sg$ and a scalar product $\iota$ on $\lso_0(2,n+1)_{\text{Ad}\rho}$. Here we want to show that, whenever the representation $\rho$ is maximal, there is a natural geometric choice for $h$ and $\iota$. In order to do this we need to recall the following crucial result: \begin{theorem}[\cite{collier2019geometry}]\label{thm:rhoequivariantsurface}
If $\rho:\pi_1(\Sg)\to\SO_0(2,n+1)$ is a maximal representation, then there exists a unique $\rho$-equivariant maximal space-like embedding $\varphi:\widetilde\Sg\to\Hyp$.
\end{theorem}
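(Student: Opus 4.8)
The plan is to reduce the statement to an asymptotic Plateau problem for maximal space-like surfaces in $\Hyp$, solve that problem by a compactness argument (or, alternatively, through the non-abelian Hodge correspondence of Theorem \ref{thm:nonabelianHodge}), and then deduce uniqueness — and with it $\rho$-equivariance of the solution — from a maximum principle.

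\emph{Step 1 (the limit curve).} A maximal representation $\rho\colon\pi_1(\Sg)\to\SO_0(2,n+1)$ is Anosov with respect to the stabilizer of an isotropic line (Burger--Iozzi--Wienhard-type results for Hermitian groups of tube type), hence admits a continuous $\rho$-equivariant boundary map from $\partial_\infty\pi_1(\Sg)\cong S^1$ into the ideal boundary $\partial_\infty\Hyp\cong\Ein^{1,n}$ of pseudo-hyperbolic space. Maximality — equivalently, positivity of this boundary map — forces its image $\Lambda_\rho$ to be an \emph{acausal} topological circle: any two distinct points of $\Lambda_\rho$ span a space-like geodesic of $\Hyp$. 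Such a curve bounds a well-behaved space-like region, so it makes sense to look for a complete space-like surface in $\Hyp$, maximal in the sense of \eqref{maximal}, with ideal boundary $\Lambda_\rho$.

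\emph{Step 2 (existence).} I would exhaust $\Lambda_\rho$ from the inside by space-like Jordan curves bounding compact space-like disks, solve the corresponding compact Plateau problem for the maximal-surface equation, and pass to a limit. The crux is a pair of uniform interior estimates: acausality of $\Lambda_\rho$ confines the tangent planes of the approximating disks to a compact family of space-like $2$-planes (a gradient bound of Lorentzian flavour for the ``time'' function), and, granted this, the equation $\tr_{g_T}\Pi=0$ together with the Gauss equation controls the second fundamental form on compact subsets, so standard elliptic bootstrapping produces a complete maximal space-like surface $M\subset\Hyp$ with $\partial_\infty M=\Lambda_\rho$. Alternatively — and more in line with the Higgs-theoretic setup of this paper — one singles out the distinguished Riemann surface structure $X$ on $\Sg$ for which the maximal $\SO_0(2,n+1)$-Higgs bundle of Proposition \ref{prop:polystableHiggs} has vanishing quadratic part $q_2=0$ (the existence and uniqueness of such $X$ being a phenomenon in the spirit of Labourie's unique-minimal-surface theorem for Hitchin representations); for that cyclic Higgs field the Hitchin self-duality equations are precisely the Gauss--Codazzi equations of a maximal space-like immersion $X\to\Hyp$, and unpacking the harmonic metric yields the equivariant map directly.

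\emph{Step 3 (uniqueness and equivariance).} Let $\varphi_1,\varphi_2\colon\widetilde\Sg\to\Hyp$ be two $\rho$-equivariant maximal space-like embeddings; properness forces both to have ideal boundary $\Lambda_\rho$. Consider the function $u(p):=d\big(\varphi_1(p),\varphi_2(\widetilde\Sg)\big)$, where $d$ is a suitable pseudo-distance in $\Hyp$ (or a monotone function of it); since $\rho$ acts by isometries and each $\varphi_i(\widetilde\Sg)$ is $\rho$-invariant, $u$ descends to the closed surface $\Sg$ and hence attains its extrema. A computation using maximality of both surfaces shows that, where $u$ is positive, it obeys a differential inequality of the form $\Delta_{g_{T,1}}u\ge c\,u$ with $c>0$ (the usual comparison estimate for minimal/maximal surfaces, transplanted to the space-like setting and carried through at the extremal point by a barrier argument if needed); evaluated at a positive maximum this is impossible, so $\max u\le 0$, and symmetrically $\min u\ge 0$. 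Hence $u\equiv 0$, the two images coincide, and — both maps being equivariant for the same $\rho$ — the embeddings agree. Applying the same comparison to $M$ and $\rho(\gamma)\cdot M$ for $\gamma\in\pi_1(\Sg)$ shows $M$ is $\rho$-invariant, so it lifts to the asserted $\rho$-equivariant embedding $\varphi$, which is then also unique.

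\emph{Main obstacle.} The heart of the argument is Step 2: the uniform a priori estimates underpinning the compactness argument. The target $\Hyp$ is pseudo-Riemannian and the codimension $n$ can be large, so neither the classical theory of minimal surfaces in $\mathbb{H}^m$ nor the Lorentzian codimension-one theory of maximal hypersurfaces applies off the shelf — one must genuinely combine a Lorentzian-type gradient estimate controlling the space-like directions with a curvature estimate adapted to higher codimension. If one instead routes the proof through Higgs bundles, the same difficulty reappears as the task of showing that the harmonic metric produces an honest space-like \emph{embedding} rather than merely an equivariant harmonic map, and that the distinguished conformal structure $X$ with $q_2=0$ exists and is unique.
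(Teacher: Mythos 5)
This statement is not proved in the paper at all: it is imported verbatim from Collier--Tholozan--Toulisse \cite{collier2019geometry}, so the only meaningful comparison is with the argument in that reference. Your outline reproduces its architecture faithfully: maximality gives an Anosov representation with a positive, hence acausal, equivariant limit circle in $\partial_\infty\Hyp\cong\Ein^{1,n}$; one solves the asymptotic Plateau problem for that circle, the crux being exactly the a priori control you identify (acausality confines the tangent planes to a compact set of space-like $2$-planes, and together with the Gauss equation this bounds $\Pi$ on compacta); uniqueness is a maximum-principle comparison of two complete maximal surfaces sharing the same ideal boundary; and $\rho$-invariance of the solution is a formal consequence of uniqueness applied to $M$ and $\rho(\gamma)\cdot M$, exactly as in your Step 3. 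Three caveats. First, in \cite{collier2019geometry} existence is organized as a continuity-method argument on the connected space of semipositive loops (openness via invertibility of the Jacobi operator, closedness via the compactness you describe, non-emptiness via the totally geodesic $\mathbb H^2$) rather than an exhaustion by compact Plateau problems; the analytic content is the same, and your sketch names it without supplying it, which is acceptable for an outline but means the proposal is not self-contained precisely where the theorem is hard. Second, your ``alternative'' Higgs-bundle route in Step~2 is logically backwards relative to the source: the existence and uniqueness of the conformal structure $X$ with $q_2=0$ (the Labourie-type statement) is there \emph{deduced from} the uniqueness of the equivariant maximal surface, not used to prove it, so taking it as an input would be circular unless you establish it independently --- properness of the energy functional gives existence of such an $X$, but its uniqueness is exactly as hard as the theorem itself. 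Third, your uniqueness step shows the two images coincide; to conclude that the \emph{embeddings} agree one should phrase uniqueness at the level of $\rho$-invariant maximal surfaces (as the source does), since two equivariant parametrizations of the same surface may differ by an equivariant reparametrization. None of these is a fatal gap; the proposal is a correct skeleton of the cited proof.
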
 
For any maximal representation $\rho:\pi_1(\Sg)\to\SO_0(2,n+1)$ the unique maximal space-like embedding $\varphi:\widetilde\Sg\to\Hyp$ induces a metric $g_T$ on $\widetilde\Sg$ as explained in Section \ref{sec:2.1}. Moreover, by $\rho$-equivariance we get an induced Riemannian metric $h$ on $\Sg\cong\widetilde\Sg/\rho\big(\pi_1(\Sg)\big)$. As for $\iota$, however, we first need to introduce a scalar product on $\R^{n+3}$ which is related with the maximal surface and $h$. In fact, for any $\widetilde x\in\widetilde\Sg$ we have a frame of $\R^{n+3}$ formed by the unit tangent vectors $u_1(\widetilde x)$ and $u_2(\widetilde x)$ to the surface at $\varphi(\widetilde x)$, the unit time-like normal vectors $N_1(\widetilde x),\dots,N_n(\widetilde x)$ at $\varphi(\widetilde x)$ and the position vector $\varphi(\widetilde x)$. It is possible to define a scalar product $\iota_{\widetilde x}$ on $\R^{n+3}$, depending on the point $\widetilde x\in\widetilde \Sg$, by declaring the frame $\{u_1(\widetilde x),u_2(\widetilde x),\varphi(\widetilde x),N_1(\widetilde x),\dots,N_n(\widetilde x)\}$ to be orthonormal. In particular, since $\lso_0(2,n+1)\subset\mathfrak{gl}(n+3,\R)\cong\R^{n+3}\times\big(\R^{n+3}\big)^*$, we can restrict the scalar product $\iota_{\widetilde x}\otimes\iota^*_{\widetilde x}$ on $\lso_0(2,n+1)$, hence it can be induced on the trivial bundle $\widetilde\Sg\times\lso_0(2,n+1)$. This descend to a metric $\iota$ on $\lso_0(2,n+1)_{\Ad\rho}$ by setting: \begin{equation}\label{eq:iota}\iota_p(\phi,\phi'):=\iota_{\widetilde x}(\widetilde\phi_{\widetilde x},\widetilde\phi_{\widetilde x}') \ \ \text{for some} \ \widetilde x\in\pi^{-1}(p),\end{equation} where $p\in\Sg$, $\pi:\widetilde\Sg\to\Sg$ is the universal cover projection and $\widetilde\phi,\widetilde\phi'$ are lifts of $\phi,\phi'$ to the trivial bundle $\widetilde\Sg\times\lso_0(2,n+1)$ evaluated at $\widetilde x$. Since the maximal space-like embedding is $\rho$-equivariant, so is the scalar product $\iota_{\widetilde x}$. Moreover, with the same approach as for $\Lsl(3,\R)$ (\cite{li2013teichm}), it is easy to check that $\iota_p$ does not depend on the choice of the point in the fibre $\pi^{-1}(p)$, so that $\iota$ gives rise to a well-defined metric on the flat bundle $\lso_0(2,n+1)_{\Ad\rho}\to\Sg$. \begin{theorem}\label{thm:metricsmoothpoints}
For any maximal representation $\rho:\pi_1(\Sg)\to\SO_0(2,n+1)$ which is also a smooth point in $\charvarietynmaximal$, there exists a scalar product $\g_\rho$ on $H^1(\Sg,\lso_0(2,n+1)_{\Ad\rho})$ depending on the induced metric $h$ on $\Sg\cong\widetilde\Sg/\rho\big(\pi_1(\Sg)\big)$ and the inner product $\iota$ on $\lso_0(2,3)_{\Ad\rho}$. In particular, the tensor $\g$ defines a Riemannian metric in the smooth locus of $\charvarietynmaximal$.
\end{theorem}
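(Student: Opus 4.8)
The plan is to assemble the constructions of Sections~\ref{sec:3.1} and~\ref{sec:3.2} and then carry out the three verifications this assembly requires: that the geometric data $(h,\iota)$ attached to a maximal $\rho$ is admissible, that the induced pairing on cohomology is a genuine scalar product, and that it varies smoothly with $[\rho]$ over the smooth locus.

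First I would fix a maximal $\rho$ and apply Theorem~\ref{thm:rhoequivariantsurface} to get the unique $\rho$-equivariant maximal space-like embedding $\varphi\colon\widetilde\Sg\to\Hyp$. Its first fundamental form $g_T$ is $\rho$-invariant, hence descends to a Riemannian metric $h$ on $\Sg\cong\widetilde\Sg/\rho\big(\pi_1(\Sg)\big)$. For the bundle metric $\iota$ I would check, exactly as in the $\SL(3,\R)$ construction of \cite{li2013teichm}, that the point-dependent inner product $\iota_{\widetilde x}$ on $\R^{n+3}$ obtained by declaring the adapted frame $\{\,u_1(\widetilde x),u_2(\widetilde x),\varphi(\widetilde x),N_1(\widetilde x),\dots,N_n(\widetilde x)\,\}$ orthonormal is (i) independent of the $\mathrm{O}(n)$-ambiguity in the normal frame, since it depends only on the orthogonal splitting $T_{\widetilde x}\widetilde\Sg\oplus N_{\widetilde x}\widetilde\Sg\oplus\R\varphi(\widetilde x)$ of $\R^{n+3}$, and (ii) $\rho$-equivariant, because $\rho(\gamma)$ carries the adapted frame at $\widetilde x$ to one at $\gamma\cdot\widetilde x$. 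Then $\iota_{\widetilde x}\otimes\iota^*_{\widetilde x}$, restricted to $\lso_0(2,n+1)\subset\mathfrak{gl}(n+3,\R)$, descends to a well-defined metric $\iota$ on the flat bundle $\lso_0(2,n+1)_{\Ad\rho}$, and it is positive-definite because on the adapted frame it is the standard Euclidean product.

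Next I would feed $(h,\iota)$ into the twisted Hodge theory of Section~\ref{sec:3.1}. Since $\iota$ is fibrewise positive-definite and $h$ is Riemannian, the pairing $g$ of \eqref{eq:definitiong} is a positive-definite $L^2$ inner product on $\Omega^1\big(\Sg,\lso_0(2,n+1)_{\Ad\rho}\big)$; the associated Laplacian $\Delta$ is elliptic and, by the Hodge decomposition recalled there (following \cite{raghunathan1972discrete} --- note that $\iota$ need not be parallel for the flat connection, yet every cohomology class still admits a unique harmonic representative), the quantity $\g_\rho([\alpha],[\beta]):=g(\alpha_{\mathrm{harm}},\beta_{\mathrm{harm}})$ of \eqref{eq:definitiongoncohomology} is a well-defined symmetric bilinear form on $H^1\big(\Sg,\lso_0(2,n+1)_{\Ad\rho}\big)$. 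It is positive-definite: $\g_\rho([\alpha],[\alpha])=g(\alpha_{\mathrm{harm}},\alpha_{\mathrm{harm}})\ge 0$ because the integrand in \eqref{eq:definitiong} is pointwise non-negative (expand in a fibrewise $\iota$-orthonormal frame), and its vanishing forces $\alpha_{\mathrm{harm}}=0$, i.e. $[\alpha]=0$. Finally the whole construction is equivariant under conjugation --- conjugating $\rho$ by $Q\in\SO_0(2,n+1)$ replaces $\varphi$ by $Q\circ\varphi$ and intertwines $(h,\iota)$ via the induced isomorphism of flat bundles, which by the uniqueness in Theorem~\ref{thm:rhoequivariantsurface} is all that can occur --- so $\g_\rho$ descends to $\charvarietynmaximal$, and by Theorem~\ref{thm:tangenttocharactervariety} it is a scalar product on the Zariski tangent space $T_{[\rho]}\charvarietynmaximal$ at every smooth point.

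To upgrade this pointwise scalar product to a Riemannian metric I would finally show that $[\rho]\mapsto\g_\rho$ is a smooth (indeed real-analytic) section of $\Sym^2T^*$ on the smooth locus. The maximal surface equation \eqref{maximal} is elliptic and has a unique equivariant solution, so by the implicit function theorem (as in \cite{collier2019geometry}) the embedding $\varphi_\rho$, and hence $h_\rho$ and $\iota_\rho$, depend smoothly on $\rho$; the orthogonal projection onto the harmonic subspace $\Ker\Delta_\rho$ depends smoothly on the elliptic operator $\Delta_\rho$ by standard perturbation theory, and Goldman's identification of the Zariski tangent space with $H^1$ is compatible with families. I expect this last step to be the \emph{main obstacle}: establishing the smooth --- and ideally real-analytic --- dependence of the unique equivariant maximal surface on $\rho$, and its compatibility with the real-analytic variety structure of $\charvarietynmaximal$. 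The verifications in the preceding two paragraphs, by contrast, are essentially routine transcriptions of Sections~\ref{sec:3.1}--\ref{sec:3.2} and of the $\SL(3,\R)$ case treated in \cite{li2013teichm}.
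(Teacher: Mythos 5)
Your proposal is correct and follows essentially the same route as the paper: the paper gives no separate argument for this theorem, treating it as the summary of the construction of Sections \ref{sec:3.1}--\ref{sec:3.2} (take $h$ and $\iota$ from the unique equivariant maximal surface of Theorem \ref{thm:rhoequivariantsurface}, then run the twisted Hodge theory of \eqref{eq:definitiong}--\eqref{eq:definitiongoncohomology}). The extra verifications you supply --- frame-independence and equivariance of $\iota$, positive-definiteness of $\g_\rho$, and smooth dependence of the maximal surface (hence of $\g$) on $\rho$ --- are precisely the points the paper leaves implicit, and your sketches of them are sound.
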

\begin{remark}\label{rem:metricSOLiTamburelli}
A similar result was obtained on the Hitchin component for $\SL(3,\R)$ (\cite{li2013teichm}) and on the maximal component of the character variety for $\SO_0(2,2)\cong\PSL(2,\R)\times\PSL(2,\R)$ (\cite{tamburelli2021riemannian}). In both cases, the components are smooth manifolds and the construction above can be performed at each point. In our case, as we will see in detail for $n=2$, the space of maximal representations is not in general a smooth manifold but has orbifold and non-orbifold singularities. Nevertheless, we will show that, at least for the orbifolds ones, the Riemannian metric $\g$ is compatible with such singularities.
\end{remark}
We conclude the section by stating a technical result that will be useful for some forthcoming computations \begin{lemma}[\cite{li2013teichm}]\label{lem:matrixscalarproduct}
Let $\widetilde x$ be any point in the fibre $\pi^{-1}(p)$ for some $p\in\Sg$, and suppose we have a matrix representation $H$ of the inner product $\iota_{\widetilde x}$ with respect to the canonical basis of $\R^{n+3}$. Then, $$\iota_p(M,N)=\tr(M^tH^{-1}NH), \quad \text{for} \ M,N\in\lso_0(2,n+1) \ .$$ \end{lemma}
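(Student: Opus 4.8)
The plan is to unwind definition~\eqref{eq:iota} of the fibrewise inner product $\iota_p$ and reduce the statement to an elementary computation inside $\mathfrak{gl}(n+3,\R)$. Fix $\widetilde x\in\pi^{-1}(p)$. By construction, along the fibre over $p$ the pairing $\iota_p$ is exactly the restriction to $\lso_0(2,n+1)$ of the scalar product $\iota_{\widetilde x}\otimes\iota^*_{\widetilde x}$ on $\mathfrak{gl}(n+3,\R)\cong\R^{n+3}\otimes(\R^{n+3})^*$, where $\iota^*_{\widetilde x}$ is the inner product on $(\R^{n+3})^*$ dual to $\iota_{\widetilde x}$. The only non-formal ingredient is the standard linear-algebra fact that, if $\iota_{\widetilde x}$ is represented by the symmetric positive-definite (hence invertible) matrix $H$ in the canonical basis $\{e_i\}$ of $\R^{n+3}$, then $\iota^*_{\widetilde x}$ is represented by $H^{-1}$ in the dual basis $\{e^j\}$: the musical isomorphism $\R^{n+3}\to(\R^{n+3})^*$ associated with $\iota_{\widetilde x}$ has matrix $H$, so its inverse has matrix $H^{-1}$, and $\iota^*_{\widetilde x}$ is the pull-back of $\iota_{\widetilde x}$ along it.

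Granting this, I would write $M,N\in\lso_0(2,n+1)\subset\mathfrak{gl}(n+3,\R)$ in the induced basis $\{e_i\otimes e^j\}$ and expand bilinearly,
\[
(\iota_{\widetilde x}\otimes\iota^*_{\widetilde x})(M,N)=\sum_{i,j,k,l}M^i_j\,N^k_l\,\iota_{\widetilde x}(e_i,e_k)\,\iota^*_{\widetilde x}(e^j,e^l)=\sum_{i,j,k,l}M^i_j\,N^k_l\,H_{ik}\,(H^{-1})_{jl}\ .
\]
Performing the two contractions and using the symmetry of $H$ (hence of $H^{-1}$) together with cyclicity of the trace, the right-hand side is identified with $\tr(M^t H^{-1} N H)$; equivalently, one checks that this induced metric on $\End(\R^{n+3})$ has the form $\langle M,N\rangle=\tr\big(M^{\sharp}N\big)$ for the \emph{adjoint-type} operator $M\mapsto M^{\sharp}$ built from $H$, and a one-line computation identifies $M^{\sharp}$ and recovers the formula. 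The identity holds on all of $\mathfrak{gl}(n+3,\R)$, hence on $\lso_0(2,n+1)$; and since, as already observed just before the statement, the value is independent of the choice of $\widetilde x\in\pi^{-1}(p)$, it indeed computes $\iota_p$.

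The argument uses no geometry — neither the maximal surface nor the group $\SO_0(2,n+1)$ enters beyond the inclusion $\lso_0(2,n+1)\subset\mathfrak{gl}(n+3,\R)$ — so the one place that genuinely requires attention is the transpose and index bookkeeping: keeping track of which of $H$ and $H^{-1}$ acts on the $\R^{n+3}$-factor and which on the $(\R^{n+3})^*$-factor under the chosen identification of $\mathfrak{gl}(n+3,\R)$ with $\R^{n+3}\otimes(\R^{n+3})^*$, so that the four-fold sum collapses to exactly $\tr(M^t H^{-1} N H)$ rather than to a cyclic or transpose variant. That is where I expect the modest effort to go; as a sanity check, when $H=\mathrm{Id}$ the expression reduces to the Frobenius pairing $\tr(M^t N)$.
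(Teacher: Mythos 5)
The paper never proves this lemma --- it is imported verbatim from \cite{li2013teichm} --- so the only question is whether your sketch actually closes, and I do not think it does as written. Your strategy (expand $\iota_{\widetilde x}\otimes\iota^*_{\widetilde x}$ in the basis $e_i\otimes e^j$, use that the dual metric has matrix $H^{-1}$, contract) is the right one, but the step you defer, the ``transpose and index bookkeeping'', is precisely where the content of the statement sits, and your assertion that the contraction ``collapses to exactly $\tr(M^tH^{-1}NH)$ rather than to a cyclic or transpose variant'' fails under the identification you set up. With the standard identification $e_i\otimes e^j\mapsto E_{ij}$ (the $\R^{n+3}$-factor carrying the row index), a direct computation gives $\tr(E_{ij}^{\,t}H^{-1}E_{kl}H)=(H^{-1})_{ik}H_{jl}$, whereas $(\iota_{\widetilde x}\otimes\iota^*_{\widetilde x})(E_{ij},E_{kl})=H_{ik}(H^{-1})_{jl}$. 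In other words your four-fold sum collapses to $\tr(M^tHNH^{-1})$, the $H\leftrightarrow H^{-1}$--swapped variant; the same conclusion comes out of your ``adjoint'' reformulation, since the $\iota_{\widetilde x}$-adjoint is $M^{\sharp}=H^{-1}M^tH$ and $\tr(M^{\sharp}N)=\tr(M^tHNH^{-1})$. For a general symmetric positive-definite $H$ the two trace expressions are genuinely different bilinear forms on $\mathfrak{gl}(n+3,\R)$, and one can check on explicit examples (already in $\lso_0(2,1)$ with $H$ coming from a boost) that they also differ when restricted to the Lie algebra. So the identity you claim ``holds on all of $\mathfrak{gl}(n+3,\R)$'' is at most one of two inequivalent candidates, and with your conventions it is not the one in the lemma.

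Consequently the last paragraph of your proposal --- that no geometry enters beyond the inclusion $\lso_0(2,n+1)\subset\mathfrak{gl}(n+3,\R)$ and that the statement is a convention-free linear-algebra fact --- cannot stand. To land on the stated formula you must either (i) adopt and justify the transposed identification of $\mathfrak{gl}(n+3,\R)$ with $(\R^{n+3})^*\otimes\R^{n+3}$, with the dual factor on the row index, and check that this is the convention used consistently in Section \ref{sec:3.2} and in \cite{li2013teichm}; or (ii) use the specific structure of $H$: the frame $\{u_1,u_2,\varphi,N_1,\dots,N_n\}$ declared $\iota_{\widetilde x}$-orthonormal is also pseudo-orthonormal for $\bi$, so the matrix $P$ of the frame lies in $\mathrm{O}(2,n+1)$, giving $H=(PP^t)^{-1}$ and the relation $H^{-1}=\mathrm{I}_{2,n+1}H\,\mathrm{I}_{2,n+1}$, which together with $M^t=-\mathrm{I}_{2,n+1}M\,\mathrm{I}_{2,n+1}$ for $M\in\lso_0(2,n+1)$ is the kind of input needed to relate the two trace expressions on the subalgebra. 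Either way, the contraction has to be redone explicitly; as it stands the proposal proves a formula that differs from the one in the lemma by the interchange of $H$ and $H^{-1}$.
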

\section{Sub-varieties for $n=2$}
Here we specialize the discussion to maximal representations into $\SO_0(2,3)$ whose associated space will be denoted with $\charvarietymaximal$. In Section \ref{sec:4.1} we recall the connected components decomposition, and in Section \ref{sec:4.2} we show that our metric is compatible with the orbifold structure of $\charvarietymaximal$. In Section \ref{sec:4.3} we study the restriction of the metric $\g$ on some interesting subspaces of the character variety and we explain how it is related to the metric defined by Tamburelli for $\SO_0(2,2)$ maximal representations (\cite{tamburelli2021riemannian}). Then, we prove that, for some connected components, $\g$ restricts on the Fuchsian locus to a multiple of the Weil-Petersson metric on Teichm\"uller space, which embeds as a totally geodesic sub-variety. 
Finally, in Section \ref{sec:4.5} there is the most surprising but also mysterious part of the construction: the metric $\g$ on the $\SO_0(2,3)$-Hitchin component does not restrict to the Weil-Petersson metric on the copy of Teichm\"uller space, which is in contrast with the $\SL(3,\R)$ case (\cite{li2013teichm}).
\subsection{Maximal connected components}\label{sec:4.1}
The decomposition in connected components we presented in Theorem \ref{thm:decompositioncomponents} holds only for $n>2$. This is because the case $n=2$ is somehow special due to the presence of additional connected components. Recall that a maximal polystable $\SO_0(2,3)$-Higgs bundle is a quadr-uple $(\mathcal W,b_\mathcal W,q_2,\beta_0)$ as in Proposition \ref{prop:polystableHiggs}, where $\mathcal W$ is a holomorphic rank $2$ vector bundle over the Riemann surface $X:=(\Sg,J)$. In particular, when the first Stiefel-Whitney class of $\mathcal W$ vanishes, it is endowed with an $\SO(2,\C)$-structure (\cite{alessandrini2019geometry}). Thus, there is a further holomorphic splitting $$(\mathcal W,b_\mathcal W)=\bigg(\mathcal F\oplus\mathcal F^{-1},\begin{pmatrix}
    0 & 1 \\ 1 & 0 
\end{pmatrix}\bigg) \ ,$$ where $\mathcal F$ is a holomorphic line bundle over $X$ (\cite{mumford1971theta}). \begin{theorem}[\cite{bradlow2006maximal}]\label{thm:decompositionduetre}
In the notation above, the space of maximal surface group representations into $\SO_0(2,3)$ decomposes as: \begin{equation}\label{eq:decompositionSO23}\charvarietymaximal=\bigg(\bigsqcup_{sw_1\neq 0, \ sw_2}\charvarietymaximalstiefel\bigg)\sqcup\bigg(\bigsqcup_{0\le d\le 4g-4}\mathcal{M}_d(\Sg)\bigg) \ ,\end{equation}where $sw_1\in H^1(\Sg,\Z_2)$ and $sw_2\in H^2(\Sg,\Z_2)$ represent, respectively, the first and second Stiefel-Whitney class of $\mathcal W$, and $d$ is the degree of the holomorphic line bundle $\mathcal F$.
\end{theorem}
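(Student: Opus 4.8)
The plan is to transport the statement to the Higgs-bundle side through the non-abelian Hodge correspondence (Theorem~\ref{thm:nonabelianHodge}) and to count connected components of the maximal locus of $\Higgs$, that is, of the moduli space of polystable $\SO_0(2,3)$-Higgs bundles with $\deg\mathcal L=2g-2$. By Proposition~\ref{prop:polystableHiggs} every such Higgs bundle is determined by a quadruple $(\mathcal W,b_{\mathcal W},q_2,\beta_0)$, where $\mathcal W$ is a rank-$2$ holomorphic bundle carrying a non-degenerate $b_{\mathcal W}\in H^0(X,\Sym^2\mathcal W^*)$, with $q_2\in H^0(X,K_X^2)$ and $\beta_0\in H^0(X,\mathcal W\otimes\mathcal I\otimes K_X^2)$ where $\mathcal I^2\cong\mathcal O_X$. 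The first reduction is that $q_2$ ranges over the contractible affine space $H^0(X,K_X^2)$ and that forgetting it defines a continuous surjection onto the moduli of triples $(\mathcal W,b_{\mathcal W},\beta_0)$; hence $q_2$ is irrelevant for $\pi_0$, and it suffices to enumerate the connected components of this smaller moduli space of $K_X^2$-twisted $\mathrm{O}(2,\C)$-Higgs pairs.

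Next I would separate these triples according to their discrete invariants. The topological type of $\mathcal W$ as an $\mathrm{O}(2,\C)$-bundle is recorded by its Stiefel--Whitney classes $(sw_1,sw_2)\in H^1(\Sg,\Z_2)\times H^2(\Sg,\Z_2)$, which are locally constant on the moduli space. If $sw_1\neq 0$, then $\mathcal W$ does not reduce further and $(sw_1,sw_2)$ are the only discrete data. If $sw_1=0$, then $\mathcal W$ reduces to the $\SO(2,\C)$-form $\mathcal F\oplus\mathcal F^{-1}$ with the hyperbolic pairing, so that $\mathcal I\cong\det\mathcal W\cong\mathcal O_X$, $\mathcal L\cong K_X$, and $d:=\deg\mathcal F$ becomes an additional integer invariant --- defined only up to sign, because the reflection exchanging $\mathcal F$ and $\mathcal F^{-1}$ (with the matching interchange of the two components of $\beta_0$) is an admissible $\mathrm{O}(2,\C)$-gauge transformation; one normalises $d\ge 0$, and then $sw_2=d\bmod 2$. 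Writing $\beta_0=(\mu,\nu)$ with $\mu\in H^0(X,\mathcal F\otimes K_X^2)$ and $\nu\in H^0(X,\mathcal F^{-1}\otimes K_X^2)$, a polystability analysis of which $\Phi$-invariant sub-bundles of $\mathcal E=\mathcal L\oplus\mathcal I\oplus\mathcal F\oplus\mathcal F^{-1}\oplus\mathcal L^{-1}$ can destabilise shows that the stratum $\mathcal M_d(\Sg)$ is non-empty precisely for $0\le d\le 4g-4$. Combined with the classification of the $\mathrm{O}(2,\C)$-type, this produces exactly the index set of \eqref{eq:decompositionSO23}, and the listed pieces are pairwise disjoint because $sw_1$, $sw_2$ and $d$ are all locally constant.

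The heart of the proof is that each of these pieces is connected (and non-empty), for which I would invoke the Morse-theoretic method of Hitchin~\cite{hitchin1987self}. The function $f=\|\Phi\|_{L^2}^2$ on $\Higgs$ --- the $L^2$-norm of the Higgs field in the harmonic metric --- is proper, and Hitchin's argument shows it is a perfect Morse--Bott function (its critical submanifolds have even index, being fixed loci of a Hamiltonian circle action); consequently the number of connected components of any union of components of $\Higgs$ equals the number of connected components of the subvariety of local minima of $f$ inside it. One therefore has to (i) identify the local minima of $f$ in the maximal locus with fixed $(sw_1,sw_2)$, respectively fixed $d$, and (ii) show that each minimal stratum is connected and meets the polystable locus. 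The subtlety is that the ordinary ``system of Hodge bundles'' minima do not occur in the maximal stratum; instead, after the Cayley-type parametrisation of the first paragraph, the minima are governed by the Cayley partner $(\mathcal W,b_{\mathcal W},\beta_0)$ alone and, modulo the remaining gauge symmetries, organise into bundles with affine fibres over connected bases --- for $sw_1=0$ over a symmetric product $\Sym^{k}(X)$ recording the zero divisor of a distinguished component of $\beta_0$, and for $sw_1\neq 0$ over a component of the moduli of stable $\mathrm{O}(2,\C)$-bundles, which fibres onto a connected Prym variety. Connectedness of these bases, together with non-emptiness of the open stability conditions defining them, gives that each $\charvarietymaximalstiefel$ with $sw_1\neq 0$ and each $\mathcal M_d(\Sg)$ is connected; with disjointness and the fact that Proposition~\ref{prop:polystableHiggs} exhausts all maximal Higgs bundles, this is the decomposition~\eqref{eq:decompositionSO23}.

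The main obstacle is exactly step (i)--(ii): locating the minima of the Hitchin function inside the maximal stratum --- where one genuinely needs the Cayley correspondence rather than complex variations of Hodge structure --- and verifying that each minimal stratum is connected and meets the polystable locus, including the degenerate endpoints $d=0$, whose minimum locus contains reducible Higgs bundles, and $d=4g-4$, which is the Hitchin-section case. This is the technical core of the analysis in \cite{bradlow2006maximal}, and it is also the origin of the extra components labelled by $d$, which are absent (the corresponding invariant ceasing to be topological) for $n\ge 3$.
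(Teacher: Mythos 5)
This statement is quoted from \cite{bradlow2006maximal} and the paper gives no proof of it, so there is nothing internal to compare your argument against; what you have written is, in outline, exactly the strategy of the cited reference (and of Gothen's earlier work for $\mathrm{Sp}(4,\R)$): pass to the Higgs-bundle side via Theorem \ref{thm:nonabelianHodge}, stratify the maximal locus by the locally constant invariants $sw_1$, $sw_2$ and, when $sw_1=0$, the degree $d=\deg\mathcal F$ normalised to be non-negative with $sw_2=d\bmod 2$, establish the range $0\le d\le 4g-4$ from polystability (non-vanishing of the section of $\mathcal F^{-1}\otimes K_X^2$ forces $d\le 4g-4$), and then prove connectedness of each stratum by Morse theory for the proper function $\|\Phi\|^2_{L^2}$, locating the subvariety of local minima and showing it is connected.

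Two caveats. First, your opening reduction --- discarding $q_2$ because it ranges over a contractible affine space --- is not free: you must check that polystability is preserved along the scaling $q_2\mapsto tq_2$, or else bypass this step entirely (as the reference does) by running the Morse-theoretic argument on the full moduli space, where $q_2$ automatically vanishes on the minima. Second, and more importantly, your own steps (i)--(ii) --- identifying the minima of the Hitchin function inside the maximal stratum via the Cayley correspondence and proving that each minimal stratum is non-empty, connected and polystable, including the degenerate cases $d=0$ and $d=4g-4$ --- are precisely where the entire content of the theorem lives, and they are described rather than proved. As submitted, this is an accurate roadmap to the proof in \cite{bradlow2006maximal}, not a proof.
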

It is interesting to note that for representations whose corresponding $\SO_0(2,3)$-Higgs bundle has vanishing first Stiefel-Whitney class, there is a further decomposition dictated by the integer number $d$. In fact, for $d=4g-4$ we retrieve the Hitchin component, denoted with $\Hit(\Sg)$, for $d=0$ we have the most singular component denoted with $\mathcal{M}_0(\Sg)$ and the remaining $4g-5$ connected components are the equivalent of the Gothen components (\cite{gothen2001components}) under the isomorphism $\mathbb P\mathrm{Sp}(4,\R)\cong\SO_0(2,3)$. All spaces with $d\in(0,4g-4]$ are smooth manifolds, hence the construction of Theorem \ref{thm:metricsmoothpoints} applies. As for the components $\charvarietymaximalstiefel$ they have at most orbifold singularities.  \begin{theorem}\label{thm:metricGothencomponentHitchin}
There is a well-defined Riemannian metric, still denoted with $\g$, on the $\SO_0(2,3)$-Hitchin component and all Gothen components \end{theorem}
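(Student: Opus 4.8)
```latex
\begin{proof}[Proof sketch]
The plan is to reduce the statement to Theorem \ref{thm:metricsmoothpoints} by exhibiting, for each of the components listed in \eqref{eq:decompositionSO23}, a maximal representation which is a smooth point of $\charvarietymaximal$, and then to invoke the smoothness of the component together with a density argument. First I would recall from Theorem \ref{thm:decompositionduetre} that the Gothen components $\mathcal{M}_d(\Sg)$ with $d\in(0,4g-4]$ are parametrized via the nonabelian Hodge correspondence (Theorem \ref{thm:nonabelianHodge}) by quadruples $(\mathcal W,b_\mathcal W,q_2,\beta_0)$ with $\mathcal W=\mathcal F\oplus\mathcal F^{-1}$, $\deg\mathcal F=d$, and that for these values of $d$ the corresponding moduli space of polystable maximal $\SO_0(2,3)$-Higgs bundles is a smooth complex manifold; this is the content of the parametrization results of \cite{gothen2001components,alessandrini2019geometry}, which show that the only singularities of $\charvarietymaximal$ come from Higgs bundles with extra automorphisms, and that for $d>0$ (in particular $d=4g-4$) the relevant stable locus has no such automorphisms, so every point is smooth. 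Hence, by the real-analytic isomorphism of Theorem \ref{thm:nonabelianHodge}, $\mathcal{M}_d(\Sg)$ with $d\in(0,4g-4]$ is a smooth manifold inside $\charvarietymaximal$, and in particular each of its points is a smooth point of the ambient variety.

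Once smoothness is established, the construction of Section \ref{sec:3.1} together with Theorem \ref{thm:rhoequivariantsurface} applies pointwise: for every $\rho\in\mathcal{M}_d(\Sg)$, $d\in(0,4g-4]$, there is a unique $\rho$-equivariant maximal space-like embedding $\varphi\colon\widetilde\Sg\to\Hyp$, which furnishes the metric $h$ on $\Sg$ and the inner product $\iota$ on $\lso_0(2,3)_{\Ad\rho}$ as in \eqref{eq:iota}, and hence the scalar product $\g_\rho$ of \eqref{eq:definitiongoncohomology} on $T_{[\rho]}\charvarietymaximal\cong H^1(\Sg,\lso_0(2,3)_{\Ad\rho})$. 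The only thing left to check is that the assignment $\rho\mapsto\g_\rho$ is smooth in $\rho$; this follows because the maximal embedding $\varphi$ depends smoothly (indeed real-analytically) on $\rho$ — a consequence of the implicit function theorem applied to the maximal surface equation \eqref{maximal}, whose linearization is the elliptic Jacobi-type operator studied in \cite{collier2019geometry} — so the frame $\{u_1,u_2,\varphi,N_1,N_2\}$, the inner product $\iota_{\widetilde x}$, and therefore the Hodge-theoretic data defining $\g$ all vary smoothly. This is precisely the argument of Theorem \ref{thm:metricsmoothpoints}, now carried out on the smooth submanifolds $\mathcal{M}_d(\Sg)$ rather than the whole (possibly singular) variety.

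For the Hitchin component $\Hit(\Sg)=\mathcal{M}_{4g-4}(\Sg)$ there is nothing further to do: it is the special case $d=4g-4$ of the above, and it is classically a smooth ($\cong\mathbb R^{\dim}$) component, so $\g$ is well-defined there by the same reasoning. I expect the main obstacle to be not the Hodge theory — which is entirely parallel to \cite{li2013teichm,tamburelli2021riemannian} — but rather assembling from the Higgs bundle literature the precise statement that the stable locus underlying $\mathcal{M}_d(\Sg)$ for $d\in(0,4g-4]$ consists of smooth points of $\charvarietymaximal$, i.e. that these representations are irreducible (equivalently, the corresponding polystable Higgs bundles are stable with only the trivially-acting automorphisms), so that Theorem \ref{thm:tangenttocharactervariety} and the smooth-manifold structure genuinely apply. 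Granting this — which is exactly where the $d=0$ and $sw_1\neq 0$ components are excluded, being handled separately in Theorem \ref{thm:B} — the statement follows. In fact, since the components are connected and the metric is defined and smooth on all of each of them, $\g$ is globally a Riemannian metric on $\Hit(\Sg)$ and on each $\mathcal{M}_d(\Sg)$, $0<d<4g-4$, which is the assertion.
\end{proof}
```
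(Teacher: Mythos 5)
Your proposal is correct and follows exactly the route the paper takes: the paper gives no separate proof of this theorem, deriving it immediately from the observation that $\mathcal{M}_d(\Sg)$ for $d\in(0,4g-4]$ (including $\Hit(\Sg)$ at $d=4g-4$) consists of smooth points of $\charvarietymaximal$, so that Theorem \ref{thm:metricsmoothpoints} applies pointwise. The extra details you supply (stability of the underlying Higgs bundles, smooth dependence of the maximal embedding on $\rho$) are consistent with the paper's references and only elaborate on the same argument.
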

\subsection{Orbifold singularities}\label{sec:4.2}
In this section, we show that the Riemannian metric $\g$ is compatible with the orbifold structure of $\charvarietymaximal$. This will be accomplished by looking at how maximal representations into $\SO_0(2,3)$ can factorize through some subgroups. Let us briefly recall that whenever we have a reductive subgroup $G<\SO_0(2,3)$, the Zariski closure of a completely reducible representation $\rho:\pi_1(\Sg)\to\SO_0(2,3)$ is contained in $G$, up to conjugation, if and only if the corresponding polystable $\SO_0(2,3)$-Higgs bundle reduces to a $G$-Higgs bundle (\cite{alessandrini2019geometry}). Furthermore, it is shown (\cite{burger2010surface}) that if the Zariski closure of a maximal representation $\rho:\pi_1(\Sg)\to\SO_0(2,3)$ is contained in a proper subgroup $G$, then $G$ is of Hermitian type and the inclusion map $G\to\SO_0(2,3)$ is a \emph{tight embedding} (\cite{hamlet2014tight}), namely the property of being maximal is preserved. In particular, we have the following list of tightly embedded Lie subgroups of $\SO_0(2,3)$ (\cite{hamlet2014tight}): \begin{itemize}
    \item[$\bullet$] $\SO_0(2,1)$ with the inclusion induced by the $5$-dimensional irreducible representation;
    \item[$\bullet$] $\SO_0(2,1)\times\SO(2)$ with the inclusion induced by the isometric embedding $\R^{2,1}\to\R^{2,3}$ which sends $(x_1,x_2,x_3)\mapsto (x_1,x_2,x_3,0,0)$;
    \item[$\bullet$] $\SO_0(2,2)\times\SO(1)$ with the inclusion induced by the isometric embedding $\R^{2,2}\to\R^{2,3}$ which sends $(x_1,x_2,x_3,x_4)\mapsto (x_1,x_2,x_3,x_4,0)$.
\end{itemize}
The maximal representations factoring through the first group in the above list form the \emph{Fuchsian locus} $\mathcal F(\Sg)$ in the Hitchin component, which is an isomorphic copy of Teichm\"uller space $\mathcal T(\Sg)$ of the surface. Each representation in $\mathcal F(\Sg)$ can be written as $j\circ\rho_{\text{Fuch}}$, where $\rho_{\text{Fuch}}$ is a Fuchsian representation into $\SO_0(2,1)$ and $j$ is the unique irreducible representation of $\SO_0(2,1)$ into $\SO_0(2,3)$. The representations that factorize through the second Lie group of the list form the Fuchsian locus in the connected components $\charvarietymaximalstiefel$ and $\mathcal{M}_0(\Sg)$. Each such representation can be written as $(\rho_{\text{Fuch}}\otimes\det\alpha)\oplus\alpha$, with $\alpha:\pi_1(\Sg)\to\mathrm O(2)$, and Teichm\"uller space is found by taking $\alpha$ the trivial representation. Finally, those factoring through $\SO_0(2,2)$ can be seen as holonomies of globally hyperbolic maximal compact (GHMC for short) anti-de Sitter $3$-manifolds isomorphic to $\Sg\times\R$ (\cite{mess2007lorentz}) and they are contained in both $\charvarietymaximalstiefel$ and $\mathcal{M}_0(\Sg)$. It is also worth mention that, any representation in $\mathcal{M}_d(\Sg)$, with $d\in(0,4g-4)$ is Zariski dense (\cite{alessandrini2019geometry}), hence there is no subspace to look for. \begin{proposition}[\cite{alessandrini2019geometry}]\label{prop:orbifoldHiggs}
All singularities of $\charvarietymaximalstiefel$ are orbifold type. Moreover, a maximal representation $\rho\in\charvarietymaximalstiefel$ defines a  \begin{itemize}
    \item[$\bullet$] $\Z_2$-orbifold point if its Zariski closure is contained in a tightly embedded copy of $\SO_0(2,2)\times\SO(1)$ or $\SO_0(2,1)\times\SO(2)$; \item[$\bullet$] $\Z_2\times\Z_2$-orbifold point if its Zariski closure is contained a tightly embedded copy of $\SO_0(2,1)\times\SO(1)\times\SO(1)$; \item[$\bullet$] smooth point otherwise. 
\end{itemize}
\end{proposition}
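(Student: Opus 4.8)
The plan is to transport the statement into the deformation theory of representations and reduce it to a computation of centralizers. By Goldman--Millson deformation theory together with the Luna slice theorem, a neighbourhood of a point $[\rho]$ of the character variety is homeomorphic to a neighbourhood of the origin in the quotient $\mathcal Q_\rho/Z(\rho)$, where $Z(\rho):=Z_{\SO_0(2,3)}\big(\rho(\pi_1(\Sg))\big)$ is the (reductive) centralizer acting through $\Ad$ on $H^1\big(\Sg,\lso_0(2,3)_{\Ad\rho}\big)$, and $\mathcal Q_\rho\subset H^1$ is the quadratic cone cut out by the cup product $H^1\times H^1\to H^2\big(\Sg,\lso_0(2,3)_{\Ad\rho}\big)$. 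Since $\SO_0(2,3)$ has trivial centre (its standard representation has odd dimension $5$), $[\rho]$ is automatically a smooth point whenever $Z(\rho)=\{\Id\}$; this already accounts for every Zariski-dense representation. Hence the singular locus of $\charvarietymaximalstiefel$ is contained in the set of representations $\rho$ with $Z(\rho)\neq\{\Id\}$, and such $\rho$, being completely reducible, have proper reductive Zariski closure $G<\SO_0(2,3)$.

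Now I would invoke maximality. By the theorem of Burger--Iozzi--Wienhard quoted above, such a $G$ is reductive of Hermitian type and $G\hookrightarrow\SO_0(2,3)$ is a tight embedding; equivalently, via the non-abelian Hodge correspondence (Theorem \ref{thm:nonabelianHodge}) and the reduction dictionary, the corresponding maximal Higgs bundle $(\mathcal W,b_{\mathcal W},q_2,\beta_0)$ reduces to a $G$-Higgs bundle. Up to conjugacy and up to the common further reduction $\SO_0(2,1)\times\SO(1)\times\SO(1)$, the possibilities for $G$ are the three subgroups on the list: the irreducible $\SO_0(2,1)$, whose centralizer in $\SO_0(2,3)$ is already trivial (so it gives smooth points, and in any case these representations lie in the Hitchin component rather than in $\charvarietymaximalstiefel$); and $\SO_0(2,2)\times\SO(1)$ and $\SO_0(2,1)\times\SO(2)$. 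For the latter two the analysis of $Z(\rho)$ is where the hypothesis $sw_1\neq0$ enters decisively: the rank-two factor $\mathcal W$ has non-trivial first Stiefel--Whitney class, so its structure group does not reduce to $\mathrm{SO}(2)$, which forces the $\mathrm O(2)$- and $\mathrm O(1)$-factors occurring in the normal forms $(\rho_{\text{Fuch}}\otimes\det\alpha)\oplus\alpha$ recalled in Section \ref{sec:4.2} to be genuinely non-orientation-preserving. Computing $Z(\rho)$ in the canonical block realisation of $\SO_0(2,3)$, the a priori continuous part of the centralizer --- a circle rotating the relevant definite $2$-plane --- is then killed by the requirement of commuting with a reflection, leaving $Z(\rho)\cong\Z_2$ when the Zariski closure lies in $\SO_0(2,2)\times\SO(1)$ or in $\SO_0(2,1)\times\SO(2)$, and $Z(\rho)\cong\Z_2\times\Z_2$ when it lies in $\SO_0(2,1)\times\SO(1)\times\SO(1)$. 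One also checks, via the $\Ad$-eigenspace decomposition of $H^1$, that the generators act non-trivially, so these are honest orbifold points, not hidden smooth ones.

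It then remains to promote the local model $\mathcal Q_\rho/Z(\rho)$ to a genuine orbifold chart. For this I would use Poincar\'e duality on the closed oriented surface $\Sg$ and the $\Ad$-invariance of the Killing form of $\lso_0(2,3)$ to identify $H^2\big(\Sg,\lso_0(2,3)_{\Ad\rho}\big)\cong H^0\big(\Sg,\lso_0(2,3)_{\Ad\rho}\big)^{*}\cong\Lie\big(Z(\rho)\big)^{*}$. In all the relevant cases $Z(\rho)$ is finite, so $H^2=0$, the cup-product obstruction vanishes identically, and the quadratic cone $\mathcal Q_\rho$ is the whole of $H^1$. Thus a neighbourhood of a singular $[\rho]$ in $\charvarietymaximalstiefel$ is exactly the linear quotient of $H^1\big(\Sg,\lso_0(2,3)_{\Ad\rho}\big)$ by a finite group isomorphic to $\Z_2$ or $\Z_2\times\Z_2$, i.e.\ an orbifold chart of the asserted type, and every remaining point is smooth.

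I expect the difficulty to be concentrated in two steps. The first is making this local model honest and canonical: one must verify carefully that the Goldman--Millson/Luna description of the character variety near a completely reducible representation really is an orbifold chart, and that it matches the Kuranishi description of the Higgs-bundle moduli space under Theorem \ref{thm:nonabelianHodge}, so that the orbifold structure read off is the intrinsic one. The second, and more substantive, is establishing that the list of tightly embedded subgroups of $\SO_0(2,3)$ is complete and that the hypothesis $sw_1\neq0$ genuinely rules out positive-dimensional centralizers: such centralizers do occur elsewhere in $\charvarietymaximal$ --- for instance along the copy of Teichm\"uller space inside $\mathcal{M}_0(\Sg)$, where $Z(\rho)$ contains a torus and the point is a non-orbifold singularity --- so it is essential that the proposition is stated only for the components $\charvarietymaximalstiefel$ with $sw_1\neq0$.
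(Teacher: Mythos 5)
Your proposal is essentially correct, but it takes a genuinely different route from the paper: the paper does not prove this proposition at all, it imports it wholesale from Alessandrini--Collier, where the classification is carried out on the Higgs-bundle side (one classifies the automorphism groups of polystable maximal $\SO_0(2,3)$-Higgs bundles with $sw_1\neq 0$, uses the Kuranishi local model for the moduli space, and transports the result through the nonabelian Hodge correspondence of Theorem \ref{thm:nonabelianHodge}). You instead work directly on the representation side: Goldman--Millson/Luna gives the local model $\mathcal Q_\rho/Z(\rho)$, Poincar\'e duality identifies $H^2$ with $\Lie(Z(\rho))^*$ so that finiteness of the centralizer kills the quadratic cone, Burger--Iozzi--Wienhard plus Hamlet's list of tight embeddings pins down the possible Zariski closures, and an explicit centralizer computation finishes the job. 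This is a legitimate and arguably more self-contained argument, and it has the advantage of making transparent exactly where $sw_1\neq 0$ enters (it is what separates this proposition from Proposition \ref{prop:orbifoldpointsswequaltozero}, where the same reduction to $\SO_0(2,1)\times\SO(2)$ produces a torus in the centralizer and hence a non-orbifold singularity). The paper's route buys compatibility with the Higgs-bundle stability analysis that is reused later (e.g.\ in Propositions \ref{prop:orbifoldpointsforngeneral} and \ref{prop:Gothencomponentorbifold}); yours buys a direct match with the centralizers $A$, $B$, $C$ computed in Lemma \ref{lem:centralizerorbifoldpoint}.

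Two small imprecisions worth fixing. First, your ``circle killed by a reflection'' mechanism is the right explanation only for the $\SO_0(2,1)\times\SO(2)$ case; for $\SO_0(2,2)\times\SO(1)$ the restriction of $\rho$ to $\R^{2,2}$ is already irreducible (a pair of non-conjugate Fuchsian representations), so Schur forces the centralizer to be $\{\pm\Id_4\}\times\{\pm1\}\cap\SO_0(2,3)\cong\Z_2$ independently of $sw_1$ --- which is consistent with the fact that these points are also $\Z_2$-orbifold points inside $\mathcal{M}_0(\Sg)$. Second, ``Zariski closure contained in a tightly embedded copy of $\SO_0(2,1)\times\SO(2)$'' must, in the $sw_1\neq0$ components, really mean contained in the full stabilizer of the splitting $\R^{2,1}\oplus\R^{0,2}$ with the $\mathrm O(2)$-factor not reducing to $\SO(2)$; the connected group $\SO_0(2,1)\times\SO(2)$ itself only houses representations with $sw_1=0$. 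Neither point invalidates your argument, but both need to be said for the centralizer computation to come out as claimed.
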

A clarification needs to be given regarding the last result. Proposition \ref{prop:orbifoldHiggs} was proven in its version for $\SO_0(2,3)$-Higgs bundles, which, thanks to nonabelian Hodge correspondence and the discussion at the beginning of the section, can be translated in the context of maximal representations. In particular, in our framework, orbifold points are generated by the centralizer $\mathcal C_\rho:=C\big(\rho(\pi_1(\Sg))\big)<\SO_0(2,3)$, which can be isomorphic to $\Z_2$ or $\Z_2\times\Z_2$. Such a centralizer acts by conjugacy on the cohomology $H^1\big(\Sg,\lso_0(2,3)_{\Ad\rho}\big)$, which as we explained (see Theorem \ref{thm:tangenttocharactervariety}), is identified with the tangent to the character variety if the representation is smooth. Thus, in order to show that our metric $\g$ is compatible with the orbifold structure we firstly need to show that the metric $g$ on the space of $\lso_0(2,3)_{\Ad\rho}$-valued $1$-forms (see Section \ref{sec:3.1}) is invariant for the centralizer action, and secondly that the induced map on cohomology preserves the harmonicity of $1$-forms. These two facts put together will give a well-defined Riemannian metric on: $$\bigslant{H^1(\Sg,\lso_0(2,3)_{\Ad\rho})}{\mathcal C_\rho}\cong T_{[\rho]}\charvarietymaximal \ , $$ whenever $\rho$ is an orbifold point of $\charvarietymaximal$.  \begin{lemma}\label{lem:centralizerorbifoldpoint}
Let $\rho:\pi_1(\Sg)\to\SO_0(2,3)$ be a maximal representation in $\charvarietymaximalstiefel$. Then \begin{itemize}
    \item[$\bullet$] if $\rho$ is a $\Z_2$-orbifold point with Zariski closure contained in $\SO_0(2,2)\times\SO(1)$ its centralizer is generated by $A=\diag(-1,-1,-1,-1,1)$, otherwise if its Zariski closure is contained in $\SO_0(2,1)\times\SO(2)$ the centralizer is generated by $B=\diag(1,1,1,-1,-1)$;
    \item[$\bullet$] if $\rho$ is a $\Z_2\times\Z_2$-orbifold point its centralizer is given by $\{\mathrm{Id_5},A,B,C\}$ where $A$ and $B$ are the same as above, $\mathrm{Id}_5$ is the $5\times 5$ identity matrix and $C:=A\cdot B=\diag(-1,-1,-1,1,-1)$.
\end{itemize}
\end{lemma}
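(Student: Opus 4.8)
The plan is to compute the centralizer $\mathcal C_\rho = C\big(\rho(\pi_1(\Sg))\big)$ directly from the information that the Zariski closure of $\rho$ sits inside one of the tightly embedded subgroups listed before Proposition \ref{prop:orbifoldHiggs}. The key observation is that the centralizer of $\rho(\pi_1(\Sg))$ inside $\SO_0(2,3)$ equals the centralizer of its Zariski closure $G$, since conjugation is a continuous algebraic action and the fixed-point condition is Zariski-closed. So the problem reduces to computing $C_{\SO_0(2,3)}(G)$ for each tightly embedded $G$. First I would recall the explicit block forms of the three tight embeddings: $\SO_0(2,1)\times\SO(2)$ sits in the $(x_1,x_2,x_3)$-block together with rotations in the $(x_4,x_5)$-plane; $\SO_0(2,2)\times\SO(1)$ sits in the $(x_1,x_2,x_3,x_4)$-block; and $\SO_0(2,1)$ embedded irreducibly acts with no invariant proper nondegenerate subspace. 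In the $\Z_2\times\Z_2$ case the Zariski closure is contained in $\SO_0(2,1)\times\SO(1)\times\SO(1)$, i.e. in the $(x_1,x_2,x_3)$-block with the last two coordinates fixed.

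Next I would carry out Schur-type arguments decomposing $\R^{2,3}$ into $G$-isotypic components. For $G$ with Zariski closure exactly $\SO_0(2,2)\times\SO(1)$ acting standardly, $\R^{2,3}$ splits as the irreducible $\R^{2,2}$ (the standard representation of $\SO_0(2,2)$ — note it is irreducible as a real representation since $\SO_0(2,2)$ acts transitively enough on it via its full image) plus the trivial $1$-dimensional piece spanned by $e_5$. Any element of $\SO_0(2,3)$ commuting with all of $G$ must preserve each isotypic summand and act as a scalar on the irreducible $\R^{2,2}$-part; orthogonality forces that scalar to be $\pm 1$, and on $e_5$ it acts by $\pm 1$ as well, but the determinant-one and $\SO_0$-connectedness constraints — together with the requirement that the matrix lie in $\SO(2,3)$ preserving $\mathrm I_{2,3}$ — pin down the nontrivial element to $A=\diag(-1,-1,-1,-1,1)$. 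An entirely parallel argument with the roles of the blocks swapped handles $\SO_0(2,1)\times\SO(2)$, where $\R^{2,3}$ splits as $\R^{2,1}$ (irreducible for $\SO_0(2,1)$ standard) plus the $2$-plane $\langle e_4,e_5\rangle$ on which $\SO(2)$ acts irreducibly; commuting elements act by $\pm 1$ on each, and the $\SO(2)$-piece forces $B=\diag(1,1,1,-1,-1)$. For the irreducibly embedded $\SO_0(2,1)$ in the Hitchin/Fuchsian case, the $5$-dimensional representation is irreducible, so Schur gives centralizer $\{\pm\Id_5\}\cap\SO(2,3)=\{\Id_5\}$, recovering the "smooth point" case of Proposition \ref{prop:orbifoldHiggs}. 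Finally, for the $\Z_2\times\Z_2$ case with Zariski closure in $\SO_0(2,1)\times\SO(1)\times\SO(1)$, the decomposition is $\R^{2,1}\oplus\langle e_4\rangle\oplus\langle e_5\rangle$ into three isotypic pieces; commuting elements are $\diag(\epsilon_1,\epsilon_1,\epsilon_1,\epsilon_2,\epsilon_3)$ with each $\epsilon_i=\pm1$, and imposing $\det=1$ and membership in $\SO(2,3)$ leaves exactly the Klein four-group $\{\Id_5, A, B, C\}$ with $C=AB=\diag(-1,-1,-1,1,-1)$.

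The main obstacle I anticipate is verifying that the Zariski closure is \emph{exactly} the listed group and not a proper subgroup — if $\rho$ factored through a smaller subgroup, the centralizer could be strictly larger, which is precisely the jump from the $\Z_2$ case to the $\Z_2\times\Z_2$ case. This is already controlled by Proposition \ref{prop:orbifoldHiggs}, which classifies the singularity type precisely by which tightly embedded subgroup contains the Zariski closure, so I would lean on that classification: being a $\Z_2$-orbifold point (as opposed to $\Z_2\times\Z_2$ or smooth) means the Zariski closure is contained in $\SO_0(2,2)\times\SO(1)$ or $\SO_0(2,1)\times\SO(2)$ but \emph{not} in the smaller $\SO_0(2,1)\times\SO(1)\times\SO(1)$, which is exactly what makes the relevant isotypic component ($\R^{2,2}$, respectively the $\SO(2)$-plane) irreducible and hence forces the centralizer to be the stated $\Z_2$. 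A secondary subtlety is checking the sign bookkeeping — that the candidate diagonal matrices genuinely lie in $\SO_0(2,3)$: each of $A$, $B$, $C$ has determinant $1$ and preserves $\mathrm I_{2,3}=\diag(1,1,-1,-1,-1)$ by inspection, and connectedness to the identity (membership in the \emph{identity component}) follows since each is conjugate within $\SO(2,3)$ to a product of a rotation by $\pi$ in a negative-definite $2$-plane with the identity elsewhere, hence lies in $\SO_0(2,3)$. These are routine but must be stated; the conceptual content is entirely the Schur-lemma decomposition together with the input from Proposition \ref{prop:orbifoldHiggs}.
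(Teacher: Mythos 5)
Your overall strategy -- compute the commutant of the block-embedded subgroup, then impose the determinant and identity-component constraints -- is the same as the paper's, and your treatment of the $\SO_0(2,2)\times\SO(1)$ case is correct (there $\SO_0(2,2)$ acts absolutely irreducibly on $\R^{2,2}\cong\R^2\otimes\R^2$, so the commutant really is $\R$ and the scalar is forced to be $\pm1$). But there is a genuine gap in the other two cases, coming from real Schur's lemma. The standard action of $\SO(2)$ on the negative-definite plane $\langle e_4,e_5\rangle$ is irreducible over $\R$ but \emph{not} absolutely irreducible: its commutant is isomorphic to $\C$, and the isometries in that commutant form the whole circle $\{\mathrm{Id}_3\}\times\SO(2)$, not just $\{\pm\mathrm{Id}_2\}$. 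So your claim that ``commuting elements act by $\pm1$ on each piece, and the $\SO(2)$-piece forces $B$'' does not follow; the centralizer of the group $\SO_0(2,1)\times\SO(2)$ as literally written is positive-dimensional. The same problem is worse in the $\Z_2\times\Z_2$ case: since $\SO(1)\times\SO(1)$ acts trivially, $\langle e_4\rangle$ and $\langle e_5\rangle$ are \emph{not} distinct isotypic components -- they form a single $2$-dimensional trivial isotypic summand, on which commuting isometries form all of $\mathrm O(2)$, so nothing in your decomposition forces the diagonal form $\diag(\epsilon_1,\epsilon_1,\epsilon_1,\epsilon_2,\epsilon_3)$.

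The conclusion is nevertheless correct, and there are two ways to close the gap. The paper's route is to invoke Proposition \ref{prop:orbifoldHiggs} (via nonabelian Hodge) \emph{first} to know that $\mathcal C_\rho$ is finite of order $2$ (resp.\ $4$); then it suffices to exhibit the elements $A$, $B$, $C$ as commuting with the relevant block subgroup and lying in $\SO_0(2,3)$, and cardinality does the rest -- in particular the circle of commuting rotations is ruled out because it would contradict finiteness. Alternatively, and more in the spirit of your self-contained computation, one should use the hypothesis $sw_1\neq0$: the $\mathrm O(2)$-factor $\alpha$ of the representation $(\rho_{\text{Fuch}}\otimes\det\alpha)\oplus\alpha$ cannot have image in $\SO(2)$, so the Zariski closure contains a reflection of the $(e_4,e_5)$-plane; commuting with a reflection kills the complex structure in the commutant and cuts the circle down to $\{\pm\mathrm{Id}_2\}$ (and, in the $\Z_2\times\Z_2$ case, a reflection of the form $\diag(1,-1)$ separates $e_4$ from $e_5$ and forces the diagonal shape you asserted). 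As you note, and as the paper's proof also leaves implicit, one must then check $A,B,C\in\SO_0(2,3)$, which holds since each has positive determinant on both the positive-definite $2$-plane and the negative-definite $3$-plane. Your observation that the centralizer of $\rho(\pi_1(\Sg))$ equals that of its Zariski closure is correct and harmless, but without one of the two fixes above the Schur step does not deliver the stated groups.
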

\begin{proof}
Let $\rho$ be any maximal representation in $\charvarietymaximalstiefel$. Suppose first that $\overline{\rho\big(\pi_1(\Sg)\big)}<\SO_0(2,2)\times\SO(1)$, then by appealing to Proposition \ref{prop:orbifoldHiggs} and Theorem \ref{thm:nonabelianHodge} we know that $\mathcal C_\rho\cong\Z_2$. The only possibility for a matrix in $A\in\mathrm \SO_0(2,3)$ to satisfy $$A\begin{pmatrix}M & 0 \\ 0 & 1
\end{pmatrix}A^{-1}=\begin{pmatrix}M & 0 \\ 0 & 1
\end{pmatrix},\quad \text{for all} \ M\in\SO_0(2,2)$$ is for it to be diagonal and with only $\pm 1$. In particular, having to preserve the top $4\times4$ block, and having to belong to $\SO_0(2,3)$ the only possibility is for it to be exactly $A=\diag(-1,-1,-1,-1,1)$, with $A^2=\mathrm{Id}_5$. In the other case, if $\overline{\rho\big(\pi_1(\Sg)\big)}<\SO_0(2,1)\times\SO(2)$, hence $\mathcal C_\rho\cong\Z_2$, the only possibility, according to the previous argument, is $B=\diag(1,1,1,-1,-1)$. Finally, if $\rho$ is a $\Z_2\times\Z_2$-orbifold point we have $\overline{\rho\big(\pi_1(\Sg)\big)}<\SO_0(2,1)\times\SO(1)\times\SO(1)$. In particular, both the above matrices $A,B\in\SO_0(2,3)$ still preserve matrices of the form $$\begin{pmatrix}
    \SO_0(2,1) & 0  \\ 0 & \mathrm{Id}_2
\end{pmatrix} \ .$$ Thus, $A$ and $B$ belong to $\mathcal C_\rho$ and the third non-trivial element is given by $C:=A\cdot B=\diag(-1,-1,-1,1,-1)$.
\end{proof}
\begin{lemma}\label{lem:invariantmetricg}
For any $\sigma,\sigma'\in\Omega^1(\Sg)$ and for any sections $\phi,\phi'$ of $\lso_0(2,3)_{\Ad\rho}$, we have $$g\big(\sigma\otimes L\phi L^{-1},\sigma'\otimes L\phi'L^{-1}\big)=g\big(\sigma\otimes\phi,\sigma'\otimes\phi'\big) \ ,$$ where $L$ is one among the non-trivial matrices that are part of the centralizer $C\big(\rho(\pi_1(\Sg))\big)$.
\end{lemma}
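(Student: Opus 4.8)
The statement asserts that the bilinear pairing $g$ on $\lso_0(2,3)_{\Ad\rho}$-valued $1$-forms is invariant under the conjugation action $\phi\mapsto L\phi L^{-1}$ by any non-trivial element $L$ of the centralizer $\mathcal{C}_\rho$. Recalling the definition
$$g\big(\sigma\otimes\phi,\sigma'\otimes\phi'\big)=\int_\Sg\iota(\phi,\phi')\,\sigma\wedge(\ast\sigma') \ ,$$
the $1$-form part $\sigma\wedge(\ast\sigma')$ is untouched by the action, so the whole question reduces to the pointwise claim that the scalar product $\iota$ on the flat bundle satisfies $\iota(L\phi L^{-1},L\phi'L^{-1})=\iota(\phi,\phi')$. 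The plan is therefore to establish this invariance of $\iota$ and then integrate.

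First I would pass to the universal cover and use the concrete description of $\iota$ from Section~\ref{sec:3.2}. Fix $\widetilde x\in\widetilde\Sg$; by definition $\iota_{\widetilde x}$ is the scalar product on $\R^5$ making the frame $\{u_1(\widetilde x),u_2(\widetilde x),\varphi(\widetilde x),N_1(\widetilde x),N_2(\widetilde x)\}$ orthonormal, and by Lemma~\ref{lem:matrixscalarproduct} the induced metric on $\lso_0(2,3)$ is $\iota_{\widetilde x}(M,N)=\tr(M^tH^{-1}NH)$ where $H$ is the Gram matrix of $\iota_{\widetilde x}$ in the standard basis. So the key step is: for $L$ one of the matrices $A,B,C$ of Lemma~\ref{lem:centralizerorbifoldpoint}, show $\tr\big((L M L^{-1})^t H^{-1}(L N L^{-1})H\big)=\tr(M^tH^{-1}NH)$. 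Since $L$ is diagonal with $\pm1$ entries, $L^t=L=L^{-1}$, and the claim follows immediately from the cyclic invariance of the trace \emph{provided} $L$ commutes with $H$, i.e. provided $LHL^{-1}=H$. Thus everything hinges on showing that $H$ commutes with each element of the centralizer.

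To see $LH=HL$, I would argue geometrically rather than by brute computation. The element $L\in\mathcal{C}_\rho$ is a linear isometry of $\R^{2,3}$ that preserves $\rho$ (it commutes with $\rho(\pi_1(\Sg))$), hence preserves the image of the unique $\rho$-equivariant maximal space-like embedding $\varphi$ (using uniqueness in Theorem~\ref{thm:rhoequivariantsurface}: $L\circ\varphi$ is again $\rho$-equivariant maximal space-like, so $L\circ\varphi=\varphi$, meaning $L$ fixes $\varphi(\widetilde x)$ — or at worst permutes the two lifts, which one handles via the $\{\pm\Id\}$ quotient). Consequently $L$ preserves the splitting $T_{\varphi(\widetilde x)}\Hyp\oplus\R\varphi(\widetilde x)=T\widetilde\Sg\oplus N\widetilde\Sg\oplus\R\varphi(\widetilde x)$ and acts on each piece; since $L$ is a $g_{\Hyp}$-isometry it sends the orthonormal frame $\{u_1,u_2,\varphi,N_1,N_2\}$ to another frame orthonormal for the \emph{same} $\iota_{\widetilde x}$. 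This is precisely the statement that $L$ is $\iota_{\widetilde x}$-orthogonal, equivalently $L^tHL=H$, equivalently (as $L^t=L^{-1}$) $HL=LH$. Then the trace computation above finishes the pointwise identity, and integrating over $\Sg$ — noting $\iota_p$ is well-defined independently of the lift, as recalled after \eqref{eq:iota} — yields the lemma.

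The main obstacle I anticipate is the bookkeeping at the last geometric step: one must check that $L$ really does fix $\varphi(\widetilde x)$ (and not merely the projected point in $\Hyp=\Hyptilde/\{\pm\Id\}$), and that it individually fixes $u_1,u_2$ versus possibly mixing them or the $N_i$'s — but this does not matter for the argument, since any $L$-action that is orthogonal for $\iota_{\widetilde x}$ suffices, and $L$ is orthogonal for $\iota_{\widetilde x}$ for the soft reason that it is a $g_{\Hyp}$-isometry preserving the frame's linear span decomposition into $\iota_{\widetilde x}$-orthogonal pieces on which $g_{\Hyp}$ and $\iota_{\widetilde x}$ agree up to sign. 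An alternative, entirely computational route — which may in fact be what the paper does — is to write down $H$ explicitly (it is determined by the change-of-basis matrix from the standard basis to $\{u_1,u_2,\varphi,N_1,N_2\}$) and verify $LH=HL$ by inspection for the three specific diagonal matrices $A,B,C$; this is routine but opaque, whereas the isometry argument explains \emph{why} it works.
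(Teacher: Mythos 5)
Your proposal is correct and follows essentially the same route as the paper: the key point in both is that, since $L$ lies in $\SO_0(2,3)$ and centralizes $\rho$, the uniqueness statement of Theorem~\ref{thm:rhoequivariantsurface} forces $L$ to preserve the equivariant maximal surface and hence the frame defining $\iota$, so that $\iota$ (and therefore $g$) is invariant under conjugation by $L$. The paper's proof is terser and omits the explicit reduction to $LH=HL$ via Lemma~\ref{lem:matrixscalarproduct}, but the underlying argument is the same.
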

\begin{proof}
This is simply an application of the uniqueness statement in Theorem \ref{thm:rhoequivariantsurface}. In fact, since any of the matrix $A,B$ and $C$ of Lemma \ref{lem:centralizerorbifoldpoint} belongs to $\SO_0(2,3)$, the maximal space-like surface in $\Hypdue$ associated with $L\rho L^{-1}$ (for $L=A,B,C$), is the same as $\rho$. As a consequence, the construction of the metric $h$ on $\Sg$ and the scalar product $\iota$ on $\R^5$ (see Section \ref{sec:3.2}) is invariant by the centralizer action. Recalling that the metric $g$ on the space of $\lso_0(2,3)_{\Ad\rho}$-valued $1$-forms is given by $$g\big(\sigma\otimes\phi,\sigma'\otimes\phi'\big)=\int_\Sg\iota(\phi,\phi')\sigma\wedge(\ast_h\sigma') \ , $$ we obtain the claim.
\end{proof}
\begin{theorem}\label{thm:metriccompatiblewithorbifold}
If $\rho$ is an orbifold point in $\charvarietymaximalstiefel$, then the action of the centralizer sends harmonic forms to harmonic forms. In particular, it preserves the Riemannian metric $\g$.
\end{theorem}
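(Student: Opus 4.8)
The plan is to show that for any non-trivial element $L$ of the centralizer $\mathcal C_\rho:=C\big(\rho(\pi_1(\Sg))\big)$ — that is, one of the matrices $A,B,C$ of Lemma~\ref{lem:centralizerorbifoldpoint} — the fibrewise conjugation $v\mapsto LvL^{-1}$ induces a bundle automorphism $\Ad_L$ of $\lso_0(2,3)_{\Ad\rho}$ that commutes with the Laplacian $\Delta=\delta\mathrm d+\mathrm d\delta$, and then to combine this with Lemma~\ref{lem:invariantmetricg}. First I would note that, since $L$ commutes with $\rho(\gamma)$ for every $\gamma\in\pi_1(\Sg)$, the map $v\mapsto LvL^{-1}$ on the trivial bundle $\widetilde\Sg\times\lso_0(2,3)$ intertwines the gluing relation $(\widetilde x,v)\sim(\gamma\cdot\widetilde x,\Ad\rho(\gamma)v)$, hence descends to a genuine bundle automorphism $\Ad_L$ of $\lso_0(2,3)_{\Ad\rho}$. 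Being locally constant in the flat trivialisation, $\Ad_L$ is parallel for the flat connection, so it commutes with the induced exterior derivative $\mathrm d$ on $\Omega^\bullet\big(\Sg,\lso_0(2,3)_{\Ad\rho}\big)$; applying the same construction to the contragradient representation yields a parallel automorphism $\Ad_L^\vee$ of the dual flat bundle $\lso_0(2,3)_{\Ad\rho^*}$, which likewise commutes with its exterior derivative.

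Next I would check the compatibility of $\Ad_L$ with the remaining operators entering $\delta=-(\#)^{-1}\ast^{-1}\mathrm d\ast\#$. The Hodge-star $\ast$ acts only on the form factor while $\Ad_L$ acts only on the coefficient factor, so they commute trivially. For the musical isomorphism $\#$ one uses that, exactly as in the proof of Lemma~\ref{lem:invariantmetricg}, the unique $\rho$-equivariant maximal space-like surface in $\Hypdue$ — and therefore the adapted frame $\{u_1,u_2,\varphi,N_1,N_2\}$ together with the scalar product $\iota$ — is left invariant by the centralizer action, whence $\iota(\Ad_L A,\Ad_L B)=\iota(A,B)$; this is precisely the identity $\#\circ\Ad_L=\Ad_L^\vee\circ\#$. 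Composing these facts gives $\delta\circ\Ad_L=\Ad_L\circ\delta$, and hence $\Delta\circ\Ad_L=\Ad_L\circ\Delta$. Since $\Ad_L$ is invertible, with inverse $\Ad_{L^{-1}}$ induced by an element of $\mathcal C_\rho$, it restricts to a linear isomorphism of $\Ker\Delta\cong H^1\big(\Sg,\lso_0(2,3)_{\Ad\rho}\big)$; in particular the harmonic representative of the conjugated class $\Ad_L[\alpha]$ is exactly $\Ad_L\alpha_{\text{harm}}$.

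With the harmonicity statement in hand, the invariance of $\g$ is immediate: for classes $[\alpha],[\beta]\in H^1\big(\Sg,\lso_0(2,3)_{\Ad\rho}\big)$ one has
$$\g\big(\Ad_L[\alpha],\Ad_L[\beta]\big)=g\big(\Ad_L\alpha_{\text{harm}},\Ad_L\beta_{\text{harm}}\big)=g\big(\alpha_{\text{harm}},\beta_{\text{harm}}\big)=\g\big([\alpha],[\beta]\big),$$
where the central equality is Lemma~\ref{lem:invariantmetricg}. Passing to the quotient by $\mathcal C_\rho$ then yields a well-defined Riemannian metric on $H^1\big(\Sg,\lso_0(2,3)_{\Ad\rho}\big)/\mathcal C_\rho\cong T_{[\rho]}\charvarietymaximal$ whenever $\rho$ is an orbifold point of $\charvarietymaximalstiefel$. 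The one step requiring genuine geometric input — as opposed to formal manipulation of Hodge-theoretic operators — is the compatibility $\#\circ\Ad_L=\Ad_L^\vee\circ\#$, equivalently the $\mathcal C_\rho$-invariance of $\iota$, and this is handled by the uniqueness of the $\rho$-equivariant maximal surface (Theorem~\ref{thm:rhoequivariantsurface}) just as in Lemma~\ref{lem:invariantmetricg}; the rest is careful bookkeeping with the dual bundle, and I do not anticipate a substantive obstacle.
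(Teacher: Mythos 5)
Your argument is correct, and it rests on the same geometric input as the paper's proof: the uniqueness of the $\rho$-equivariant maximal surface in $\Hypdue$ (Theorem \ref{thm:rhoequivariantsurface}) forces the pointwise invariance $\iota(L\phi L^{-1},L\phi'L^{-1})=\iota(\phi,\phi')$, which is exactly the identity the paper also invokes. Where you diverge is in how harmonicity-preservation is extracted from that invariance. The paper fixes an explicit basis $\{E_j\}_{j=1}^{10}$ of $\lso_0(2,3)$, expands $\#M=\sum_j\iota(M,E_j)E_j^*$, and verifies by direct matrix computation that $L^{-1}E_jL=\pm E_j$ for each generator $L=A,B,C$ of the centralizer, so that the coefficient functions of $\#(L\phi_iL^{-1})$ agree up to sign with those of $\#\phi_i$ and $\delta$-closedness is inherited. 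You instead package the conjugation as a parallel bundle automorphism $\Ad_L$ of $\lso_0(2,3)_{\Ad\rho}$, observe that it commutes with $\mathrm d$ and $\ast$ and intertwines $\#$ with the dual automorphism, and conclude formally that it commutes with $\delta$ and hence with $\Delta$. This basis-free route is cleaner and strictly more general: it never uses that $L^{-1}E_jL$ is $\pm E_j$ (constant-coefficient linear combinations would already suffice even in the paper's scheme), it handles all three generators at once, and it would transfer verbatim to the $\SO_0(2,n+1)$ setting of Lemma \ref{lem:gcompatibleorbifoldforanyn}, where the paper has to repeat the basis construction in arbitrary dimension. What the paper's explicit computation buys in exchange is the list of signs $L^{-1}E_jL=\pm E_j$, which it reuses later (e.g.\ in Proposition \ref{prop:harmonicformsantidesitter}, via $Q=-A$); your approach would need to re-derive the analogous intertwining for $Q\in\mathrm O(2,3)$ there, where the maximal surfaces for $\rho$ and $Q\rho Q^{-1}$ are isometric rather than equal. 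The one step to state carefully in your write-up is that $\Ad_L^\vee$ is well defined on the dual flat bundle and parallel; this follows, as you indicate, from $L$ centralizing $\rho(\pi_1(\Sg))$, so there is no substantive gap.
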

\begin{proof}Let $\alpha$ be a $\lso_0(2,3)_{\Ad\rho}$-valued $1$-form, then $\alpha$ is harmonic if and only if $\mathrm d\alpha=\delta\alpha=0$ (see Section \ref{sec:3.1}). Thus, if $\sum_i\sigma_i\otimes\phi_i$ is a harmonic representative in its cohomology class we know that $\mathrm d\big(\sum_i\sigma_i\otimes\phi_i\big)=0$ and $\delta\big(\sum_i\sigma_i\otimes\phi_i\big)=0$. We need to show that these imply $\mathrm d\big(\sum_i\sigma_i\otimes L\phi_iL^{-1}\big)=0$ and $\delta\big(\sum_i\sigma_i\otimes L\phi_iL^{-1}\big)=0$, for $L=A,B,C\in\SO_0(2,3)$ generators of the centralizers $\mathcal{C}_\rho$, according to the cases of Proposition \ref{prop:orbifoldHiggs} and Lemma \ref{lem:centralizerorbifoldpoint}. Notice that the condition $\mathrm d\big(\sum_i\sigma_i\otimes L\phi_iL^{-1}\big)=0$ simply follows by linearity of $\mathrm d$. As for the $\delta$-closedness, we first need to recall that $\delta\big(\sum_i\sigma_i\otimes L\phi_iL^{-1}\big)=0$ if and only if $$\mathrm d\ast\#\Big(\sum_i\sigma_i\otimes L\phi_iL^{-1}\Big)=\mathrm d\ast\Big(\sum_i\sigma_i\otimes\# L\phi_iL^{-1}\Big)=0 \ .$$ It must be pointed out that even though the element $\big(\sum_i\sigma_i\otimes L\phi_iL^{-1}\big)$ is a $\lso_0(2,3)_{\Ad(L\rho L^{-1})}$-valued $1$-form, since $L$ always belongs to $\SO_0(2,3)$ and because of the statement of uniqueness of the maximal surface in $\Hypdue$ (Theorem \ref{thm:rhoequivariantsurface}), the operator $\#$ is the same even after applying the centralizer action, namely the action of conjugation by $L$. In order to compute $\#$ we choose the basis for $\lso_0(2,3)$ given by \begin{align*}
     E_1=\begin{psmallmatrix}
        0 & -1 & 1 & 0 & 0 \\ 1 & 0 & 0 & 0 & 0 \\ 1 & 0 & 0 & 0 & 0 \\ 0 & 0 & 0 & 0 & 0 \\ 0 & 0 & 0 & 0 & 0 
    \end{psmallmatrix},\quad E_2=\begin{psmallmatrix}
        0 & 0 & 0 & 0 & 0 \\ 0 & 0 & 1 & 0 & 0 \\ 0 & 1 & 0 & 0 & 0 \\ 0 & 0 & 0 & 0 & 0 \\ 0 & 0 & 0 & 0 & 0
    \end{psmallmatrix},\quad E_3=\begin{psmallmatrix}
        0 & 1 & 1 & 0 & 0 \\ -1 & 0 & 0 & 0 & 0 \\ 1 & 0 & 0 & 0 & 0 \\ 0 & 0 & 0 & 0 & 0 \\ 0 & 0 & 0 & 0 & 0
    \end{psmallmatrix},\quad E_4=\begin{psmallmatrix}
        0 & 0 & 0 & 1 & 0 \\ 0 & 0 & 0 & 0 & 0 \\ 0 & 0 & 0 & 0 & 0 \\ 1 & 0 & 0 & 0 & 0 \\ 0 & 0 & 0 & 0 & 0
    \end{psmallmatrix}, \end{align*} \begin{equation*}E_5=\begin{psmallmatrix}
        0 & 0 & 0 & 0 & 1 \\ 0 & 0 & 0 & 0 & 0 \\ 0 & 0 & 0 & 0 & 0 \\ 1 & 0 & 0 & 0 & 0 \\ 1 & 0 & 0 & 0 & 0
    \end{psmallmatrix}, \quad E_6=\begin{psmallmatrix}
        0 & 0 & 0 & 0 & 0 \\ 0 & 0 & 0 & 1 & 0 \\ 0 & 0 & 0 & 0 & 0 \\ 0 & 1 & 0 & 0 & 0 \\ 0 & 0 & 0 & 0 & 0
    \end{psmallmatrix},\quad E_7=\begin{psmallmatrix}
        0 & 0 & 0 & 0 & 0 \\ 0 & 0 & 0 & 0 & 1 \\ 0 & 0 & 0 & 0 & 0 \\ 0 & 0 & 0 & 0 & 0 \\ 0 & 1 & 0 & 0 & 0
    \end{psmallmatrix}, \quad E_8=\begin{psmallmatrix}
        0 & 0 & 0 & 0 & 0 \\ 0 & 0 & 0 & 0 & 0 \\ 0 & 0 & 0 & 1 & 0 \\ 0 & 0 & 1 & 0 & 0 \\ 0 & 0 & 0 & 0 & 0
    \end{psmallmatrix},
\end{equation*}\begin{equation*}
   E_9=\begin{psmallmatrix}
       0 & 0 & 0 & 0 & 0 \\ 0 & 0 & 0 & 0 & 0 \\ 0 & 0 & 0 & 0 & 1 \\ 0 & 0 & 0 & 0 & 0 \\ 0 & 0 & 1 & 0 & 0
   \end{psmallmatrix} ,\quad E_{10}=\begin{psmallmatrix}
       0 & 0 & 0 & 0 & 0 \\ 0 & 0 & 0 & 0 & 0 \\ 0 & 0 & 0 & 0 & 0 \\ 0 & 0 & 0 & 0 & 1 \\ 0 & 0 & 0 & -1 & 0
   \end{psmallmatrix} \ .
\end{equation*}
Let us denote with $\{E_j^*\}_{j=1}^{10}$ the dual basis, then by definition of $\#$ we have $$(\# M)(N)=\iota(M,N)$$ for $M,N\in\lso_0(2,3)$. Thus, $$\#M=\sum_{j=1}^{10}\iota(M,E_j)E^*_j \ , \ \text{with} \ E^*_j(E_i)=\begin{cases}
    1 \quad \text{if} \ i=j \\ 0 \quad \text{otherwise}
\end{cases} \ .$$
By hypothesis, we know that $\mathrm d\ast\big(\sum_i\sigma_i\otimes\#\phi_i\big)=0$ if and only if $$\mathrm d\ast\bigg(\sum_i\sigma_i\otimes\sum_{j=1}^{10}\iota(\phi_i,E_j)E_j^*\bigg)=0 \ ,$$which implies that \begin{equation}\label{eq:coefficientidual}
    \mathrm d\ast\bigg(\sum_i\sigma_i\iota(\phi_i,E_j)\bigg)=0 \ , \ \text{for any} \ j=1,\dots,10 \ .
\end{equation}Therefore, \begin{align*}
    \mathrm d\ast\bigg(\sum_i\sigma_i\otimes\# L\phi_iL^{-1}\bigg)&=\mathrm d\ast\bigg(\sum_i\sigma_i\otimes\sum_{j=1}^{10}\iota(L\phi_iL^{-1},E_j)E^*_j\bigg) \\ &=\mathrm d\ast\bigg(\sum_i\sigma_i\otimes\sum_{j=1}^{10}\iota(L\phi_iL^{-1},LL^{-1}E_jLL^{-1})E^*_j\bigg) \\ &=\mathrm d\ast\bigg(\sum_i\sigma_i\otimes    \sum_{j=1}^{10}\iota(\phi_i,L^{-1}E_jL)E^*_j\bigg) \ ,
\end{align*}where in the last step we used the invariance of $\iota$ under the action of $\mathcal C_\rho$ as explained in the proof of Lemma \ref{lem:invariantmetricg}. At this point, everything being explicit, it remains only to compute $L^{-1}E_jL$ for every $j=1,\dots,10$ and for $L$ equal to one matrix among those that can generate the centralizer $\mathcal C_\rho$ (see Lemma \ref{lem:centralizerorbifoldpoint}). We will explain only the case $L=A=\diag(-1,-1,-1,-1,1)=A^{-1}$, as the others are very similar, and will also need it in another proof later on. As just mentioned, with a straightforward computation we deduce that \begin{align*}
&A^{-1}E_1A=E_1,\quad A^{-1}E_2A=E_2,\quad A^{-1}E_3A=E_3,\quad A^{-1}E_4A=E_4, \\ & A^{-1}E_5A=-E_5, \quad  A^{-1}E_6A=E_6,\quad A^{-1}E_7A=-E_7,\quad A^{-1}E_8A=E_8,\\ & A^{-1}E_9A=-E_9,\quad A^{-1}E_{10}A=-E_{10} \ .
\end{align*}According to the above computation, we get $\mathrm d\ast\big(\sum_i\sigma_i\otimes\sum_{j=1}^{10}\iota(\phi_i,A^{-1}E_jA)E^*_j\big)=0$ since, up to a sign, the coefficients of $E^*_j$ coincides with those of (\ref{eq:coefficientidual}). Thus, the term $\mathrm d\ast\#\big(\sum_i\sigma_i\otimes A\phi_iA^{-1}\big)$ is equal to zero, which is equivalent to $\delta\big(\sum_i\sigma_i\otimes A\phi_iA^{-1}\big)=0$, as required.
\end{proof}
As for the connected component $\mathcal{M}_0(\Sg)$, the situation is slightly different in the sense that it contains points that are non-orbifold singularities. This means that the centralizer of the representation is no longer discrete but is a Lie subgroup of $\SO_0(2,3)$ with strictly positive dimension. The classification is as follows: 
\begin{proposition}[\cite{alessandrini2019geometry}]\label{prop:orbifoldpointsswequaltozero}
A maximal representation $\rho\in\mathcal{M}_0(\Sg)$ defines a \begin{itemize}
    \item[$\bullet$] a non-orbifold singularity if its Zariski closure is contained in a tightly embedded copy of $\SO_0(2,1)\times\SO(2)$;
    \item[$\bullet$] a $\Z_2$-orbifold singularity if its Zariski closure is contained in a tightly embedded copy of $\SO_0(2,2)$;
    \item[$\bullet$] smooth point otherwise.
\end{itemize}
\end{proposition}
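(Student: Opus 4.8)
The plan is to obtain this as the representation-theoretic shadow of the Higgs bundle statement proved in \cite{alessandrini2019geometry}: via the nonabelian Hodge correspondence (Theorem~\ref{thm:nonabelianHodge}) together with the dictionary ``a polystable $\SO_0(2,3)$-Higgs bundle reduces to a $G$-Higgs bundle if and only if the Zariski closure of the associated representation is conjugate into $G$'', the reductions considered there correspond exactly to the three cases in the statement. Concretely, the first thing I would record is the local model of $\charvarietymaximal$ at a polystable maximal $\rho$: by the Luna slice theorem, together with the fact (from \cite{alessandrini2019geometry}) that the relevant deformation complex is unobstructed on the component $\mathcal{M}_0(\Sg)$, a neighbourhood of $[\rho]$ in $\mathcal{M}_0(\Sg)$ is real-analytically isomorphic to a neighbourhood of the origin in $S/\mathcal{C}_\rho$, where $\mathcal{C}_\rho:=C\big(\rho(\pi_1(\Sg))\big)<\SO_0(2,3)$ and $S\subseteq H^1\big(\Sg,\lso_0(2,3)_{\Ad\rho}\big)$ is a $\mathcal{C}_\rho$-invariant linear slice (Theorem~\ref{thm:tangenttocharactervariety}). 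Hence $[\rho]$ is a smooth point when $\mathcal{C}_\rho$ is trivial, an orbifold point when $\mathcal{C}_\rho$ is finite and acts on $S$ without pseudo-reflections, and a non-orbifold singularity as soon as $\dim\mathcal{C}_\rho>0$; so everything reduces to computing $\mathcal{C}_\rho$.

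Next I would run through the three cases, using that a proper Zariski closure of a maximal $\rho$ is conjugate into one of the tightly embedded subgroups from Section~\ref{sec:4.2} (\cite{burger2010surface}), those relevant for $\mathcal{M}_0(\Sg)$ being $\SO_0(2,2)$, realised as the stabiliser of the splitting $\R^{2,2}\oplus\R^{0,1}=\R^{2,3}$, and $\SO_0(2,1)\times\SO(2)$, realised as the stabiliser of $\R^{2,1}\oplus\R^{0,2}$; otherwise $\rho$ is Zariski dense. If $\rho$ is Zariski dense, then $\mathcal{C}_\rho$ is the centre of $\SO_0(2,3)$, which is trivial since this group acts irreducibly on $\R^5$ and $-\mathrm{Id}_5\notin\SO(2,3)$; so $[\rho]$ is smooth. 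If the Zariski closure is conjugate to $\SO_0(2,2)$, then — $\SO_0(2,2)$ acting irreducibly and of real type on $\R^{2,2}$ — Schur's lemma forces an element of $\mathcal{C}_\rho$ to be $\pm\mathrm{Id}_4$ on that block and $\pm1$ on the complementary line, and the determinant and connectedness constraints leave $\mathcal{C}_\rho=\{\mathrm{Id}_5,\diag(-1,-1,-1,-1,1)\}\cong\Z_2$, exactly as in Lemma~\ref{lem:centralizerorbifoldpoint}. Moreover the generator acts by conjugation on $\lso_0(2,3)$ as $+1$ on $\lso_0(2,2)$ and as $-1$ on the $4$-dimensional part coupling $\R^{2,2}$ with the line, so its $(-1)$-eigenspace on $H^1$ has dimension $4(2g-2)\ge 8$: the $\Z_2$-action is visibly not a pseudo-reflection, and $[\rho]$ is a genuine $\Z_2$-orbifold point. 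Finally, if the Zariski closure is conjugate into $\SO_0(2,1)\times\SO(2)$, the same Schur-type bookkeeping — $\SO_0(2,1)$ irreducible of real type on $\R^{2,1}$, and the centraliser of $\SO(2)$ in $\mathrm{O}(2)$ being $\SO(2)$ — shows that $\mathcal{C}_\rho$ contains the circle $\{\diag(\mathrm{Id}_3,R_\theta)\}\cong\SO(2)$ acting on the last two coordinates, so $\dim\mathcal{C}_\rho\ge 1$.

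The step I expect to be the main obstacle is upgrading this last case from ``singular'' to ``non-orbifold'': a positive-dimensional centralizer has to be shown to genuinely destroy the orbifold property, not merely to produce a singular point. For this I would make the $\SO(2)$-action on the slice explicit. Writing $\lso_0(2,3)=\lso_0(2,1)\oplus\lso(2)\oplus\mathfrak{m}$ with $\mathfrak{m}$ the off-diagonal part coupling $\R^{2,1}$ and $\R^{0,2}$, the connected centralizer $\mathcal{C}_\rho^0\cong\SO(2)$ acts trivially on $H^1$ of the first two summands and, once $\R^{0,2}$ is given its complex structure, by a single character on $H^1(\Sg,\mathfrak{m})\cong\C^N$ with $N=3(2g-2)\ge 6$; hence the slice quotient is, up to a Euclidean factor, the cone over $\C^N/\SO(2)$ intersected with the quadratic cone coming from the cup product, and one expects the link of $[\rho]$ in $\mathcal{M}_0(\Sg)$ to surject onto $\mathbb{C}\mathbb{P}^{N-1}$. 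Since $\mathbb{C}\mathbb{P}^{N-1}$ has non-trivial rational cohomology in degree $2$ while the link of an orbifold point, being a finite quotient of a sphere, has vanishing rational cohomology in all degrees strictly between $0$ and its top dimension, this would force $[\rho]$ not to be an orbifold point. This is precisely the computation carried out in \cite{alessandrini2019geometry} on the Higgs bundle side, and it is the only place where one needs the explicit weight decomposition of $H^1(\Sg,\lso_0(2,3)_{\Ad\rho})$ rather than just the cardinality or dimension of $\mathcal{C}_\rho$; controlling the intersection with the quadratic cone is the part I expect to be most delicate.
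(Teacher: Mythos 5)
The paper does not actually prove this proposition: it is quoted from \cite{alessandrini2019geometry}, where the statement is established on the Higgs bundle side of the nonabelian Hodge correspondence, so there is no internal proof to compare yours against. Judged on its own terms, your framework (Luna/Goldman--Millson local model $S/\mathcal C_\rho$, reduction to computing the centralizer) is the right one, and the two cases with finite centralizer are essentially complete: the centre of $\SO_0(2,3)$ is trivial (since $-\mathrm{Id}_5\notin\SL(5,\R)$), so Zariski-dense points are smooth; for Zariski closure $\SO_0(2,2)$ your Schur-plus-determinant computation gives $\mathcal C_\rho=\{\mathrm{Id}_5,\diag(-1,-1,-1,-1,1)\}\cong\Z_2$, consistent with Lemma~\ref{lem:centralizerorbifoldpoint}, and the generator has $(-1)$-eigenspace of dimension $4(2g-2)$ on $H^1$, so it is not a pseudo-reflection and the point is a genuine $\Z_2$-orbifold singularity. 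In these cases $H^0(\Sg,\lso_0(2,3)_{\Ad\rho})=0$, hence $H^2=0$ by duality and the slice really is linear.

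The genuine gap is in the first bullet, which is the only substantive assertion of the proposition. Your opening claim that the deformation complex is unobstructed on $\mathcal M_0(\Sg)$ is false precisely at these points: once $\mathcal C_\rho$ contains a circle, $H^2(\Sg,\lso_0(2,3)_{\Ad\rho})\cong H^0(\Sg,\lso_0(2,3)_{\Ad\rho})^*\cong\lso(2)^*\neq 0$, and the local model is $Q^{-1}(0)/\mathcal C_\rho$ for the cup-product quadratic cone $Q$, not $H^1/\mathcal C_\rho$. This is not a cosmetic correction: $Q$ restricted to $H^1(\Sg,\mathfrak m)$ is (up to a constant) the moment map of the weight-one circle action for the Goldman symplectic form, and because the invariant pairing on $\mathfrak m\cong\R^{2,1}\otimes\R^{0,2}$ has signature $(2,4)$, this moment map is an \emph{indefinite} Hermitian norm on $\C^N$; its zero locus is a nontrivial real quadric cone, so the link of $[\rho]$ is not the join of a sphere with $\mathbb{CP}^{N-1}$ and in particular does not surject onto $\mathbb{CP}^{N-1}$ as you assert. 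Your cohomological test is the right detector (an orbifold link is a quotient $S^{m}/\Gamma$ and has no rational cohomology in intermediate degrees), but as written you apply it to the wrong space, and you ultimately defer the actual computation back to \cite{alessandrini2019geometry} --- the very source the proposition is quoted from. So the non-orbifold claim, which is the heart of the statement, is not established by your argument.
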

Therefore, as can be deduced from the last result, the Fuchsian locus inside $\mathcal{M}_0(\Sg)$, and thus Teichm\"uller space, form a singularity with non-discrete centralizer. This does not allow us to apply the tools used previously for such points, but only for representations factoring through holonomies of GHMC anti-de Sitter $3$-manifolds. In fact, with exactly the same approach we get the following result: \begin{theorem}
Let $\rho\in\mathcal{M}_0(\Sg)$ and suppose that it is the holonomy of a GHMC anti-de Sitter $3$-manifolds isomorphic to $\Sg\times\R$, then the centralizer $$\mathcal C_\rho=\{\mathrm{Id}_5,A\}\cong\Z_2 , \quad A=\diag(-1,-1,-1,-1,1)$$acts by isometries on $H^1\big(\Sg,\lso_0(2,3)_{\Ad\rho}\big)$ with respect to $\g$.
\end{theorem}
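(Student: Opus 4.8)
The plan is to mimic, nearly verbatim, the argument of Theorem~\ref{thm:metriccompatiblewithorbifold}; the only genuinely new input is the explicit determination of $\mathcal C_\rho$, the rest being a transcription of the $\Z_2$-orbifold case. I would proceed in three steps.

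First, I would pin down the centralizer, as in Lemma~\ref{lem:centralizerorbifoldpoint}. Since $\rho$ is a GHMC anti-de Sitter holonomy it factors through the tightly embedded copy of $\SO_0(2,2)$ sitting in the upper-left $4\times 4$ block, and — being a $\Z_2$-orbifold point by Proposition~\ref{prop:orbifoldpointsswequaltozero} — its Zariski closure is exactly this $\SO_0(2,2)$. Under it, $\R^{2,3}$ splits as $\R^{2,2}\oplus\R e_5$ with $\R^{2,2}$ the standard representation, which is irreducible (over $\R$) because $\mathfrak{so}(2,2)\cong\mathfrak{sl}(2,\R)\oplus\mathfrak{sl}(2,\R)$ acts on it as $\R^2\otimes\R^2$, while $\R e_5$ is the trivial one. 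These two subrepresentations being non-isomorphic, any $Q\in\mathcal C_\rho$ preserves each of them, and Schur's lemma forces $Q=\diag(\lambda,\lambda,\lambda,\lambda,\mu)$. Imposing $Q^{T}\mathrm{I}_{2,3}Q=\mathrm{I}_{2,3}$ and $\det Q=1$ then gives $\lambda^2=\mu^2=1$ and $\lambda^4\mu=1$, hence $\mu=1$ and $\lambda=\pm1$, so $Q=\Id_5$ or $Q=A=\diag(-1,-1,-1,-1,1)$, i.e. $\mathcal C_\rho\cong\Z_2$.

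Second, I would show the bilinear pairing $g$ on $\lso_0(2,3)_{\Ad\rho}$-valued $1$-forms is invariant under $\phi\mapsto A\phi A^{-1}$, exactly as in Lemma~\ref{lem:invariantmetricg}: since $A\in\SO_0(2,3)$, the uniqueness part of Theorem~\ref{thm:rhoequivariantsurface} implies that the maximal space-like embedding into $\Hypdue$ associated with $A\rho A^{-1}$ is the one associated with $\rho$, so the induced metric $h$ on $\Sg$ and the scalar product $\iota$ are unchanged by conjugation by $A$; plugging this into the integral formula for $g$ yields the claim. Third, I would verify that $\phi\mapsto A\phi A^{-1}$ preserves harmonicity, following the proof of Theorem~\ref{thm:metriccompatiblewithorbifold}: $\mathrm d$-closedness passes by linearity, and $\delta$-closedness follows from the already recorded identities $A^{-1}E_jA=\pm E_j$ for the basis $\{E_1,\dots,E_{10}\}$ of $\lso_0(2,3)$, which show that the coefficient $1$-forms $\sum_i\sigma_i\,\iota(\phi_i,A^{-1}E_jA)$ differ from the $\mathrm d\ast$-closed ones $\sum_i\sigma_i\,\iota(\phi_i,E_j)$ only by a sign. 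Combining the two, conjugation by $A$ sends harmonic representatives to harmonic representatives and preserves $g$, so by the definition~(\ref{eq:definitiongoncohomology}) it acts on $H^1(\Sg,\lso_0(2,3)_{\Ad\rho})$ by isometries of $\g$.

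I do not expect a serious obstacle: Steps~2--3 are a mechanical repeat of the orbifold argument. The only point requiring care is Step~1, namely the Schur-type verification that the centralizer contains nothing beyond $\{\Id_5,A\}$; this is also exactly the reason the statement must be restricted to GHMC holonomies rather than to all of $\mathcal{M}_0(\Sg)$, since at the Fuchsian points the centralizer is no longer discrete and the harmonicity argument then only controls the $\Z_2$ subgroup generated by $A$.
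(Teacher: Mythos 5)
Your proposal is correct and follows the paper's route: the paper derives this theorem ``with exactly the same approach'' as Lemma~\ref{lem:centralizerorbifoldpoint}, Lemma~\ref{lem:invariantmetricg} and Theorem~\ref{thm:metriccompatiblewithorbifold}, which is precisely your three steps, and the sign computation $A^{-1}E_jA=\pm E_j$ you reuse is already recorded in the proof of Theorem~\ref{thm:metriccompatiblewithorbifold}. Your Schur-lemma determination of $\mathcal C_\rho$ is in fact slightly more careful than the argument in Lemma~\ref{lem:centralizerorbifoldpoint}, and you correctly isolate the one delicate point, namely that the computation of the centralizer needs the Zariski closure to be all of $\SO_0(2,2)$ (so the Fuchsian points of $\GH(\Sg)$, where the centralizer is non-discrete, are implicitly excluded).
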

\subsection{Totally geodesic sub-varieties and the Fuchsian locus}\label{sec:4.3}
Once we have been able to show that the restriction of the metric $\g$ on the subspaces of $\charvarietymaximal$ which represent orbifold points is well-defined, we want to show that in fact they embed as totally geodesic sub-varieties. In order to do so, we first find an isometry of the ambient space $\charvarietymaximal$, and then we show that the fixed points locus coincides with the subspace formed by those maximal representations factoring through one of the Lie group in the list of Section \ref{sec:4.2}. By a standard argument in Riemannian geometry, it follows that those subspaces are totally geodesic sub-varieties. Let us start with those representations whose Zariski closure is contained in $\SO_0(2,2)$. In this regard consider the map \begin{equation}\label{eq:isometryq}\begin{aligned}q:\mathfrak{R}^{\text{max}}_{2,3}&(\Sg)\longrightarrow\charvarietymaximal \\ & \rho\longmapsto Q\rho Q^{-1} \ ,\end{aligned}\end{equation}where $Q:=\diag(1,1,1,1,-1)\in\mathrm O(2,3)$. It is clear from the definition that the map $q$ fixes all the representations that are holonomies of GHMC anti-de Sitter $3$-manifolds isomorphic to $\Sg\times\R$, whose corresponding space will be denoted with $\GH(\Sg)$. Thus, we only need to prove that $q$ is an isometry for $\g$. The strategy will be similar to the one given in Section \ref{sec:4.2} with the appropriate differences and to the one given in the $\SL(3,\R)$ (\cite{li2013teichm}) and $\SO_0(2,2)$ case (\cite[\S 2.3]{tamburelli2021riemannian}). \\ \\ We are initially interested in understanding the induced map $q_\ast$ at the level of cohomology. In order to do this, we recall that (see \cite{goldman1984symplectic}) a tangent vector to a smooth path of representations $\rho_t$ is a map $u:\pi_1(\Sg)\to\lso_0(2,3)$ satisfying $$u(\gamma\gamma')-u(\gamma')=\Ad\big(\rho(\gamma)\big)u(\gamma') \ .$$ In particular, it is easy to see that if $u$ is tangent to $\rho$, then $QuQ^{-1}$ is tangent to $Q\rho Q^{-1}$. These tangent vectors are 1-cocycles representing a class in the group cohomology $H^1\big(\pi_1(\Sg),\lso_0(2,3)\big)$ which is isomorphic to $H^1\big(\Sg,\lso_0(2,3)_{\Ad\rho}\big)$, via the map \begin{equation}\begin{aligned}\label{eq:cohomologyisomorphic}
    H^1\big(&\Sg,\lso_0(2,3)_{\Ad\rho}\big)\to H^1\big(\pi_1(\Sg),\lso_0(2,3)\big) \\ &[\sigma\otimes\phi]\longmapsto\bigg(u_{\sigma\otimes\phi}:\gamma\mapsto \int_{\gamma}\sigma\otimes\phi\bigg) \ .
\end{aligned}\end{equation}\begin{lemma}\label{lem:qinducedoncohomology}For any $\sigma\in\Omega^1(\Sg)$ and for any section $\phi$ of $\lso_0(2,3)_{\Ad\rho}$, we have $$q_\ast[\sigma\otimes\phi]=[\sigma\otimes Q\phi Q^{-1}]$$
\end{lemma}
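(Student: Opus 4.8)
The plan is to trace the element $\sigma\otimes\phi$ through the explicit cohomology isomorphism \eqref{eq:cohomologyisomorphic}, apply the naturality of that isomorphism with respect to conjugation by $Q$, and then transport the result back. First I would recall that, under the isomorphism \eqref{eq:cohomologyisomorphic}, the class $[\sigma\otimes\phi]\in H^1\big(\Sg,\lso_0(2,3)_{\Ad\rho}\big)$ corresponds to the group cocycle $u_{\sigma\otimes\phi}\colon\gamma\mapsto\int_\gamma\sigma\otimes\phi$, where the integration is along a loop representing $\gamma$ and $\phi$ is parallel-transported using the flat structure. The key observation, already noted in the text just before the lemma, is that if $u\colon\pi_1(\Sg)\to\lso_0(2,3)$ is a cocycle tangent to $\rho$, then $\gamma\mapsto Q\,u(\gamma)\,Q^{-1}$ is a cocycle tangent to $Q\rho Q^{-1}$: indeed, conjugating the cocycle relation $u(\gamma\gamma')-u(\gamma')=\Ad(\rho(\gamma))u(\gamma')$ by $Q$ gives $Qu(\gamma\gamma')Q^{-1}-Qu(\gamma')Q^{-1}=\Ad(Q\rho(\gamma)Q^{-1})\big(Qu(\gamma')Q^{-1}\big)$, which is exactly the cocycle relation for $Q\rho Q^{-1}$. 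Hence $q_\ast$ acts on group cohomology simply by $[u]\mapsto[QuQ^{-1}]$.

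Next I would identify what $[QuQ^{-1}]$ is when $u=u_{\sigma\otimes\phi}$. Since $Q$ is constant, $Q\big(\int_\gamma\sigma\otimes\phi\big)Q^{-1}=\int_\gamma\sigma\otimes Q\phi Q^{-1}$; here one must check that conjugation by $Q$ intertwines the flat structures $\Ad\rho$ and $\Ad(Q\rho Q^{-1})$ on the bundle, so that $Q\phi Q^{-1}$ is a genuine section of $\lso_0(2,3)_{\Ad(Q\rho Q^{-1})}$ and the parallel transport used to define the path integral is compatible — this is immediate from the definition of the flat bundle, since $(\widetilde x,v)\sim(\gamma\cdot\widetilde x,\Ad\rho(\gamma)v)$ transforms to $(\widetilde x,QvQ^{-1})\sim(\gamma\cdot\widetilde x,\Ad(Q\rho(\gamma)Q^{-1})(QvQ^{-1}))$. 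Therefore $q_\ast[\sigma\otimes\phi]$ corresponds, under \eqref{eq:cohomologyisomorphic} for the representation $Q\rho Q^{-1}$, to the cocycle $\gamma\mapsto\int_\gamma\sigma\otimes Q\phi Q^{-1}$, which is precisely the image of $[\sigma\otimes Q\phi Q^{-1}]$. Pulling back through the (inverse of the) isomorphism \eqref{eq:cohomologyisomorphic} yields $q_\ast[\sigma\otimes\phi]=[\sigma\otimes Q\phi Q^{-1}]$, as claimed.

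The only subtlety — and the step I expect to require the most care — is verifying that the de Rham side of \eqref{eq:cohomologyisomorphic} is genuinely natural under conjugation, i.e. that the bundle isomorphism $\lso_0(2,3)_{\Ad\rho}\to\lso_0(2,3)_{\Ad(Q\rho Q^{-1})}$ induced fibrewise by $v\mapsto QvQ^{-1}$ is flat and sends the de Rham representative $\sigma\otimes\phi$ to $\sigma\otimes Q\phi Q^{-1}$ as a closed $1$-form with values in the new bundle. This is a formal check but it is where one could slip: it amounts to noting that the flat connection on $\lso_0(2,3)_{\Ad\rho}$ is the one for which locally constant sections are the $\Ad\rho$-equivariant maps $\widetilde\Sg\to\lso_0(2,3)$, and conjugation by the constant matrix $Q$ carries these bijectively to $\Ad(Q\rho Q^{-1})$-equivariant maps, hence commutes with $\mathrm d$. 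Once this naturality is in place, the rest is the bookkeeping above. Everything is explicit because $Q=\diag(1,1,1,1,-1)$ is constant, so no analytic input (e.g. harmonicity) is needed for this lemma; harmonicity will only re-enter when one later checks, as in Section~\ref{sec:4.2}, that $q_\ast$ preserves harmonic representatives and hence is a $\g$-isometry.
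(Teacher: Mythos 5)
Your proposal is correct and follows essentially the same route as the paper: the paper's proof is exactly the observation that $q_\ast$ acts on group cocycles by $u\mapsto QuQ^{-1}$ (noted just before the lemma) combined with the identity $\int_\gamma\sigma\otimes Q\phi Q^{-1}=Q\big(\int_\gamma\sigma\otimes\phi\big)Q^{-1}$, transported through the isomorphism \eqref{eq:cohomologyisomorphic}. The extra naturality checks you flag (the flat-bundle isomorphism induced by conjugation and its compatibility with $\mathrm d$ and the path integral) are left implicit in the paper but are correctly identified and correctly resolved in your write-up.
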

\begin{proof}
This is simply because for any $\gamma\in\pi_1(\Sg)$ $$\int_\gamma \sigma\otimes Q\phi Q^{-1}=Q\bigg(\int_\gamma\sigma\otimes\phi\bigg)Q^{-1} \ ,$$ hence $u_{\sigma\otimes Q\phi Q^{-1}}=Qu_{\sigma\otimes\phi}Q^{-1}$, which is exactly what we need according to the isomorphism (\ref{eq:cohomologyisomorphic})
\end{proof}
With abuse of notation, from here on we continue to denote by $q_\ast$ the induced map at the level of $\lso_0(2,3)_{\Ad\rho}$-valued $1$-forms.
\begin{lemma}\label{lem:invariantmetricgtotallygeodesic}
    For any $\sigma,\sigma'\in\Omega^1(\Sg)$ and for any section $\phi,\phi'$ of $\lso_0(2,3)_{\Ad\rho}$, we get $$g\big(q_\ast(\sigma\otimes\phi),q_\ast(\sigma'\otimes\phi')\big)=g\big(\sigma\otimes\phi,\sigma'\otimes\phi'\big) \ .$$
\end{lemma}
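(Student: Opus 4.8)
The plan is to follow the template of Lemma~\ref{lem:invariantmetricg}, while keeping careful track of the one feature that makes the present case genuinely different: the matrix $Q=\diag(1,1,1,1,-1)$ has determinant $-1$, so it does \emph{not} lie in $\SO_0(2,3)$, and conjugation by it really does change both the representation and the geometric data attached to it in Section~\ref{sec:3.2}. The starting observation is that $Q$ nonetheless preserves the bilinear form of signature $(2,3)$, i.e. $Q^{t}\mathrm{I}_{2,3}Q=\mathrm{I}_{2,3}$, so $Q\in\mathrm{O}(2,3)$ and $Q$ descends to an isometry of $\Hypdue$. Applying this isometry to the unique $\rho$-equivariant maximal space-like embedding $\varphi:\widetilde\Sg\to\Hypdue$ of Theorem~\ref{thm:rhoequivariantsurface}, one checks directly that $Q\circ\varphi$ is $(Q\rho Q^{-1})$-equivariant, space-like and maximal; hence, by the uniqueness clause of Theorem~\ref{thm:rhoequivariantsurface}, it is \emph{the} maximal space-like embedding associated with $Q\rho Q^{-1}$.

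Next I would read off the consequences for the two ingredients of the pairing $g$ in \eqref{eq:definitiong}, namely the metric $h$ on $\Sg$ and the bundle metric $\iota$. Since $Q$ is an isometry of $\Hypdue$, the metric induced on $\Sg$ by $Q\varphi$ equals $(Q\varphi)^{*}g_{\Hypdue}=\varphi^{*}g_{\Hypdue}=h$, so $h$, the Hodge operator $\ast_h$ and the area form are untouched by $q$. For $\iota$, the adapted orthonormal frame $\{u_1,u_2,\varphi,N_1,N_2\}$ at $\varphi(\widetilde x)$ is replaced, for the surface $Q\varphi$, by its $Q$-image $\{Qu_1,Qu_2,Q\varphi,QN_1,QN_2\}$; declaring \emph{this} frame orthonormal produces a new pointwise scalar product $\iota'_{\widetilde x}$ on $\R^{5}$ which is simply $\iota_{\widetilde x}(Q^{-1}\cdot,Q^{-1}\cdot)$. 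In the matrix language of Lemma~\ref{lem:matrixscalarproduct}, if $H$ represents $\iota_{\widetilde x}$ in the canonical basis then $\iota'_{\widetilde x}$ is represented by $H'=QHQ$, using that $Q^{t}=Q=Q^{-1}$.

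With these preliminaries the conclusion is a short trace manipulation. Recalling from Lemma~\ref{lem:qinducedoncohomology} (and the abuse of notation following it) that $q_\ast(\sigma\otimes\phi)=\sigma\otimes Q\phi Q^{-1}$, I would apply Lemma~\ref{lem:matrixscalarproduct} to the representation $Q\rho Q^{-1}$, for which $(H')^{-1}=QH^{-1}Q$, and compute
\[
\iota'_p\big(Q\phi Q^{-1},Q\phi'Q^{-1}\big)=\tr\big(Q\,\phi^{t}H^{-1}\phi'H\,Q\big)=\tr\big(\phi^{t}H^{-1}\phi'H\big)=\iota_p(\phi,\phi'),
\]
where the first equality uses $(Q\phi Q^{-1})^{t}=Q\phi^{t}Q$ together with $Q^{2}=\mathrm{Id}_5$ to collapse the intermediate factors, and the second uses cyclicity of the trace. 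Since the $1$-form part and the metric $h$ are unaffected by $q$, this yields $g\big(q_\ast(\sigma\otimes\phi),q_\ast(\sigma'\otimes\phi')\big)=\int_\Sg\iota(\phi,\phi')\,\sigma\wedge(\ast_h\sigma')=g(\sigma\otimes\phi,\sigma'\otimes\phi')$, which is the claim.

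The one genuinely non-routine point — the place where the argument diverges from Lemma~\ref{lem:invariantmetricg} — is precisely that, because $Q\notin\SO_0(2,3)$, one may \emph{not} invoke $\rho$-equivariance of $\iota$ to get invariance of $g$ for free; instead one must carry along the explicit transformation $H\mapsto QHQ$ of the scalar product and verify that it cancels inside the trace formula, which works exactly because $Q$ is an orthogonal involution. Everything else (identifying $Q\varphi$ via uniqueness, and invariance of $h$) is formal. This lemma then combines, in the standard way, with the identification of the fixed-point locus of $q$ with $\GH(\Sg)$ to give the total geodesy statement of Theorem~\ref{thm:C}.
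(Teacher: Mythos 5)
Your proof is correct and follows essentially the same route as the paper: identify the maximal surface of $Q\rho Q^{-1}$ with the $Q$-image of that of $\rho$, deduce that $h$ is unchanged and that the matrix of the scalar product transforms as $H\mapsto Q^{t}HQ$, and then cancel the $Q$'s inside the trace formula of Lemma \ref{lem:matrixscalarproduct}. Your explicit appeal to the uniqueness clause of Theorem \ref{thm:rhoequivariantsurface} to pin down $Q\circ\varphi$ as \emph{the} maximal embedding for $Q\rho Q^{-1}$ is a welcome clarification of a step the paper leaves implicit, but it does not change the argument.
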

\begin{proof}
The argument of this proof differs slightly from the one given in Lemma \ref{lem:invariantmetricg}. In fact, in this case the matrix $Q$ belongs to $\mathrm O(2,3)$ and we can not appeal to the uniqueness statement in Theorem \ref{thm:rhoequivariantsurface}. Neverthless, given $\rho\in\charvarietymaximal$ if we denote by $\varphi(\Sg)_\rho$ the $\rho$-equivariant space-like maximal surface in $\Hypdue$ (Theorem \ref{thm:rhoequivariantsurface}), then it is isometric to the $q(\rho)$-equivariant maximal space-like surface $\varphi(\Sg)_{Q\rho Q^{-1}}$, with respect to the induced metrics $h$ and $h^q$ which coincide on every $\widetilde x\in\widetilde\Sg$. From this we deduce that if $H$ is a matrix representation of the $\rho$-equivariant inner product $\iota_{\widetilde x}$ on $\widetilde\Sg\times\lso_0(2,3)$, then $H^q:=Q^tHQ$ is the matrix representation of the $q(\rho)$-equivariant scalar product $\iota_{\widetilde x}^q$. After noting that $Q^t=Q=Q^{-1}$, for any $\phi,\phi'$ sections of $\lso_0(2,3)_{\Ad\rho}$ and for any $p\in\Sg$, we compute \begin{align*}
    \iota_p(\phi,\phi')&=\iota_{\widetilde x}(M,N) \quad \text{setting} \ M:=\widetilde\phi_{\widetilde x}, N:=\widetilde\phi'_{\widetilde x} \ \text{and} \ \widetilde x\in\pi^{-1}(p) \ \text{as in} \ (\ref{eq:iota}) \\ &=\tr(M^tH^{-1}NH) \quad \text{by Lemma} \ \ref{lem:matrixscalarproduct} \\ &=\tr\big(Q^t(Q^t)^{-1}M^tQ^t(Q^t)^{-1}QQ^{-1}H^{-1}(Q^t)^{-1}Q^tQ^{-1}QNQ^{-1}Q(Q^t)^{-1}Q^tHQQ^{-1}\big) \\ &=\tr\big(Q^t(q_\ast(M))^t(H^q)^{-1}q_\ast(N)H^qQ^{-1}\big) \\ &=\tr\big((q_\ast(M))^t(H^q)^{-1}q_\ast(N)H^q\big) \quad \text{by trace symmetry} \\ &=\iota^q_{\widetilde x}\big(q_\ast(M),q_\ast(N)\big) \\ &=\iota^q_p\big(q_\ast(\phi),q_\ast(\phi')\big) \quad \text{by Lemma} \ \ref{lem:qinducedoncohomology} \ .
\end{align*} In the end, we get \begin{align*}
g\big(\sigma\otimes\phi,\sigma'\otimes\phi'\big)&=\int_\Sg\iota(\phi,\phi')\sigma\wedge(\ast_h\sigma') \\ &=\int_\Sg\iota(\phi,\phi')(\sigma,\sigma')_h\mathrm d\mathrm{Vol}_h \\ &=\int_\Sg\iota^q(q_\ast(\phi),q_\ast(\phi'))(\sigma,\sigma')_{h^q}\mathrm d\mathrm{Vol}_{h^q} \\ &=g\big(q_\ast(\sigma\otimes\phi),q_\ast(\sigma'\otimes\phi')\big) \ .
\end{align*}We have therefore shown that $q_\ast$ is an isometry with respect the metric $g$ on the space of sections $\Omega^1(\Sg,\lso_0(2,3)_{\Ad\rho})$ and $\Omega^1(\Sg,\lso_0(2,3)_{\Ad q(\rho)})$.
\end{proof}
\begin{proposition}\label{prop:harmonicformsantidesitter}
    The induced map in cohomology $$q_\ast:H^1\big(\Sg,\lso_0(2,3)_{\Ad\rho}\big)\to H^1\big(\Sg,\lso_0(2,3)_{\Ad\rho}\big)$$ sends harmonic $1$-forms to harmonic $1$-forms.
\end{proposition}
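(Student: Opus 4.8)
The plan is to run, essentially verbatim, the argument used in the proof of Theorem~\ref{thm:metriccompatiblewithorbifold}, with one essential modification forced by the fact that here the conjugating matrix $Q=\diag(1,1,1,1,-1)$ lies in $\mathrm O(2,3)$ rather than in $\SO_0(2,3)$. By Lemma~\ref{lem:qinducedoncohomology}, on representatives $q_\ast$ acts by $\sum_i\sigma_i\otimes\phi_i\mapsto\sum_i\sigma_i\otimes Q\phi_iQ^{-1}$, so given a harmonic $\lso_0(2,3)_{\Ad\rho}$-valued $1$-form $\omega=\sum_i\sigma_i\otimes\phi_i$ we must show $q_\ast\omega$ is harmonic as a $\lso_0(2,3)_{\Ad q(\rho)}$-valued $1$-form (which coincides with the same space $H^1(\Sg,\lso_0(2,3)_{\Ad\rho})$ precisely when $\rho$ lies in the fixed locus of $q$). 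Since harmonicity of a $1$-form is equivalent to its being both $\mathrm d$-closed and $\delta$-closed, and $\mathrm d(q_\ast\omega)=0$ follows at once by linearity of $\mathrm d$ exactly as in Theorem~\ref{thm:metriccompatiblewithorbifold}, the only real content is the $\delta$-closedness of $q_\ast\omega$.

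The key preliminary observation is that, although $Q\rho Q^{-1}$ has a different equivariant maximal surface than $\rho$ (its image under $Q$), the two surfaces are isometric and the induced metrics $h$ and $h^q$ on $\Sg$ coincide --- this is exactly what is recorded inside the proof of Lemma~\ref{lem:invariantmetricgtotallygeodesic}. Hence the Hodge star is unchanged and the only part of the co-boundary operator affected by $q$ is the bundle isomorphism $\#$, which becomes the operator $\#^q$ built from the scalar product $\iota^q$ attached to $Q\rho Q^{-1}$; thus $\delta^q(q_\ast\omega)=0$ is equivalent to $\mathrm d\ast\#^q\big(\sum_i\sigma_i\otimes Q\phi_iQ^{-1}\big)=0$. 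Expanding $\#^q$ in the basis $\{E_j\}_{j=1}^{10}$ of $\lso_0(2,3)$ of Theorem~\ref{thm:metriccompatiblewithorbifold}, with dual basis $\{E_j^{\ast}\}$, one has $\#^q(M)=\sum_j\iota^q(M,E_j)E_j^{\ast}$; since $\Ad_Q$ preserves $\lso_0(2,3)$ (as $Q\in\mathrm O(2,3)$) and, by Lemma~\ref{lem:invariantmetricgtotallygeodesic}, $\iota^q(QMQ^{-1},QNQ^{-1})=\iota(M,N)$, we get $\iota^q(Q\phi_iQ^{-1},E_j)=\iota\big(\phi_i,Q^{-1}E_jQ\big)$. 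Writing $Q^{-1}E_jQ=\sum_k a_{jk}E_k$ with constant coefficients $a_{jk}$, the $E_j^{\ast}$-component of $\mathrm d\ast\#^q(q_\ast\omega)$ is the constant linear combination $\sum_k a_{jk}\,\mathrm d\ast\big(\sum_i\sigma_i\,\iota(\phi_i,E_k)\big)$, each summand of which vanishes because $\delta\omega=0$ is precisely the system $(\ref{eq:coefficientidual})$. Hence $\mathrm d\ast\#^q(q_\ast\omega)=0$, i.e.\ $q_\ast\omega$ is harmonic.

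The main --- indeed essentially the only --- obstacle is the point already flagged: because $Q\notin\SO_0(2,3)$ one cannot invoke the uniqueness statement of Theorem~\ref{thm:rhoequivariantsurface} to conclude that $\#$ is literally unchanged, as was done for the discrete centralizers in Theorem~\ref{thm:metriccompatiblewithorbifold}; instead $\#$ is conjugated into $\#^q$, and one has to carry this twist through the computation by using the coincidence $h=h^q$ to keep the Hodge star untouched and the relation $\iota^q(\Ad_Q\,\cdot\,,\Ad_Q\,\cdot\,)=\iota(\cdot,\cdot)$ of Lemma~\ref{lem:invariantmetricgtotallygeodesic} to transfer the scalar products; note in particular that, unlike in the orbifold case, no genuine $\Ad$-invariance of $\iota$ is available here and none is needed. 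Once this Proposition is established, it combines with Lemma~\ref{lem:invariantmetricgtotallygeodesic} to show that $q_\ast$ carries the harmonic representative of a class to the harmonic representative of its image, so that $q$ is an isometry of $(\charvarietymaximal,\g)$; its fixed-point set being the space $\GH(\Sg)$ of holonomies of GHMC anti-de Sitter $3$-manifolds, we deduce that $\GH(\Sg)$ is totally geodesic, as asserted in Theorem~\ref{thm:C}.
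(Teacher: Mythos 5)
Your proposal is correct and follows essentially the same route as the paper: reduce to $\delta$-closedness, use $h=h^q$ to keep the Hodge star fixed, expand $\#^q$ in the basis $\{E_j\}$, and transfer scalar products via $\iota^q(QMQ^{-1},QNQ^{-1})=\iota(M,N)$ from Lemma~\ref{lem:invariantmetricgtotallygeodesic}. The only (harmless) divergence is at the final step, where the paper explicitly computes $Q^{-1}E_jQ=\pm E_j$ by noting $Q=-A$ and reusing the sign table from Theorem~\ref{thm:metriccompatiblewithorbifold}, whereas you conclude more generally by writing $Q^{-1}E_jQ=\sum_k a_{jk}E_k$ with constant coefficients and invoking linearity against the system~(\ref{eq:coefficientidual}); both are valid.
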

\begin{proof}
In this case, unlike the previous one, we can make use of the technique used in Theorem \ref{thm:metriccompatiblewithorbifold}. The only difference is that now the maximal surfaces in $\Hypdue$ associated with $\rho$ and $q(\rho)$ are no longer equal but isometric. For this reason we will have to deal also with the analogous operator $\#^q$ defined on $\lso_0(2,3)_{\Ad q(\rho)}$-valued $1$-forms and with the inner product $\iota^q$ on $\lso_0(2,3)_{\Ad q(\rho)}$ introduced in the proof of Lemma \ref{lem:invariantmetricgtotallygeodesic}. Nevertheless, again we have to show that if $\sum_i\sigma_i\otimes\phi_i$ is a harmonic representative in its cohomology class and if $\mathrm d\big(\sum_i\sigma_i\otimes\phi_i\big)=\delta\big(\sum_i\sigma_i\otimes\phi_i\big)=0$, then the terms $\mathrm d\big(\sum_i\sigma_i\otimes Q\phi_iQ^{-1}\big)$ and $\delta\big(\sum_i\sigma_i\otimes Q\phi_iQ^{-1}\big)$ are also equal to zero. As usual, $\mathrm d$-closedness follows from the linearity of the differential, and as for $\delta$-closedness, however, we must show that $\mathrm d\ast\big(\sum_i\sigma_i\otimes\#^q(Q\phi_iQ^{-1})\big)=0$. Let $\{E_j\}_{j=1}^{10}$ be the basis of $\lso_0(2,3)$ introduced in the proof of Theorem \ref{thm:metriccompatiblewithorbifold}
and let $\{E_j^*\}_{j=1}^{10}$ be its dual basis. By definition, we have $$\#M=\sum_{j=1}^{10}\iota(M,E_j)E^*_j\quad \text{and}\quad \#^qM=\sum_{j=1}^{10}\iota^q(M,E_j)E^*_j \ .$$ In particular, the equation $\delta\big(\sum_i\sigma_i\otimes\phi_i\big)=0$ implies that $\mathrm d\ast\big(\sum_i\sigma_i\otimes\#\phi_i\big)=0$ which is equivalent to $$\mathrm d\ast\bigg(\sum_i\sigma_i\otimes\sum_{j=1}^{10}\iota(\phi_i,E_j)E_j^*\bigg)=0 \ .$$ The above relation allows us to conclude that $$\mathrm d\ast\bigg(\sum_i\sigma_i\iota(\phi_i,E_j)\bigg)=0 \ , \ \text{for any} \ j=1,\dots,10 \ .$$ At this point, using the equality $\iota^q(QMQ^{-1},QNQ^{-1})=\iota(M,N)$ for $M,N\in\lso_0(2,3)$ obtained in the proof of Lemma \ref{lem:invariantmetricgtotallygeodesic}, we get \begin{align*}
   \mathrm d\ast\bigg(\sum_i\sigma_i\otimes\#^q Q\phi_iQ^{-1}\bigg)&=\mathrm d\ast\bigg(\sum_i\sigma_i\otimes\sum_{j=1}^{10}\iota^q(Q\phi_iQ^{-1},E_j)E^*_j\bigg) \\ &=\mathrm d\ast\bigg(\sum_i\sigma_i\otimes\sum_{j=1}^{10}\iota^q(Q\phi_iQ^{-1},QQ^{-1}E_jQQ^{-1})E^*_j\bigg) \\ &=\mathrm d\ast\bigg(\sum_i\sigma_i\otimes    \sum_{j=1}^{10}\iota(\phi_i,Q^{-1}E_jQ)E^*_j\bigg) \ . 
\end{align*} Given that $Q=-A$, where $A:=\diag(-1,-1,-1,-1,1)$ we can refer to the computation performed in Theorem \ref{thm:metriccompatiblewithorbifold} and conclude that $\mathrm d\ast\#\big(\sum_i\sigma_i\otimes Q\phi_i Q^{-1}\big)=0$ and thus $\delta\big(\sigma_i\otimes Q\phi_iQ^{-1}\big)=0$, as required.
\end{proof}
Let us denote with $\g_\mathrm T$ the Riemannian metric defined on $\GH(\Sg)$ by Tamburelli (see Remark \ref{rem:metricSOLiTamburelli}). Combining everything together, we have the following:
\begin{theorem}\label{thm:antidesittertotallygeodesic}
The space $\big(\GH(\Sg),\g_\mathrm T\big)$ embeds as a totally geodesic sub-variety in $\charvarietymaximalstiefel$ and in $\mathcal{M}_0(\Sg)$ with respect to $\g$. In particular, the copy of Teichm\"uller space inside $\charvarietymaximalstiefel$ embeds as a totally geodesic sub-variety with respect to a multiple of the Weil-Petersson metric $\g_{\text{WP}}$.
\end{theorem}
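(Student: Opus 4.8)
The plan is to realize $\GH(\Sg)$ as the fixed-point locus of the isometry $q$ of \eqref{eq:isometryq} of $\big(\charvarietymaximal,\g\big)$, invoke the classical fact that the fixed set of an isometry is totally geodesic, and then compare the induced metric with $\g_\mathrm T$. First I would check that $q$ is an isometry by assembling what is already proved: Lemma~\ref{lem:qinducedoncohomology} identifies the induced map on $H^1\big(\Sg,\lso_0(2,3)_{\Ad\rho}\big)$ with $[\sigma\otimes\phi]\mapsto[\sigma\otimes Q\phi Q^{-1}]$, Proposition~\ref{prop:harmonicformsantidesitter} says it preserves harmonic representatives, and Lemma~\ref{lem:invariantmetricgtotallygeodesic} says it preserves the pairing $g$ on $\lso_0(2,3)_{\Ad\rho}$-valued $1$-forms; hence $q_\ast$ is a linear isometry $\big(T_{[\rho]}\charvarietymaximal,\g_\rho\big)\to\big(T_{[q(\rho)]}\charvarietymaximal,\g_{q(\rho)}\big)$ at every smooth point, so $q$ is an isometry of the smooth locus. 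At the orbifold points I would use $Q=-A$, with $A$ one of the centralizer generators of Lemma~\ref{lem:centralizerorbifoldpoint}, so that on the cohomology chart the map $q_\ast=\Ad_Q$ equals a generator of $\mathcal C_\rho$, which is $g$-isometric and harmonicity-preserving by Theorem~\ref{thm:metriccompatiblewithorbifold}; thus $q$ is an isometry of $\charvarietymaximal$ as a Riemannian orbifold, and since $A\in\SO_0(2,3)$ it preserves each connected component, restricting to $\charvarietymaximalstiefel$ and to $\mathcal{M}_0(\Sg)$.

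Next I would pin down the fixed set, working at the level of the cohomology charts since $\GH(\Sg)$ lies in the singular locus. An element of $\SO_0(2,3)$ commuting with $Q=\diag(1,1,1,1,-1)$ must preserve the eigenspace splitting $\R^5=\R^4\oplus\R e_5$, so the locus fixed by $q$ is carried by representations conjugate into the tightly embedded $\SO_0(2,2)$ stabilizing the time-like line $\R e_5$, which by Section~\ref{sec:4.2} is exactly $\GH(\Sg)$. Concretely, at $[\rho]\in\GH(\Sg)$ the $\iota$-orthogonal block decomposition $\lso_0(2,3)=\lso_0(2,2)\oplus\mathfrak m$ diagonalizes $\Ad_Q$ with eigenvalue $+1$ on $\lso_0(2,2)$ and $-1$ on $\mathfrak m$, so $q_\ast$ fixes $H^1\big(\Sg,\lso_0(2,2)_{\Ad\rho}\big)=T_{[\rho]}\GH(\Sg)$ and is $-\Id$ on a complement; in particular $\GH(\Sg)$, closed in $\charvarietymaximalstiefel$ and open in the fixed set, is a union of connected components of it. Then the standard argument---a $\g$-geodesic issuing tangent to the fixed set of the isometry $q$ is carried to itself by $q$ and hence stays in it---shows that $\GH(\Sg)$ is totally geodesic in $\charvarietymaximalstiefel$ and in $\mathcal{M}_0(\Sg)$ with respect to $\g$, the argument being read chart by chart at the singular points.

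To match Tamburelli's metric I would argue that if $\rho$ factors through the $\SO_0(2,2)$ above, then $Q$ commutes with $\rho(\pi_1(\Sg))$ and acts on $\Hypdue$, so $Q\circ\varphi$ is again a $\rho$-equivariant maximal space-like embedding; uniqueness in Theorem~\ref{thm:rhoequivariantsurface} then forces $\varphi(\widetilde\Sg)$ into the totally geodesic copy $\mathbb{H}^{2,1}=\mathrm{Fix}(Q)\subset\Hypdue$, which is exactly the anti-de Sitter $3$-space underlying the GHMC structure of $\rho$. Consequently the induced metric $h$ on $\Sg$ and the adapted inner product $\iota$ reduce to the data entering Tamburelli's construction (cf.\ Remark~\ref{rem:metricSOLiTamburelli}), so $\g|_{\GH(\Sg)}=\g_\mathrm T$. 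Finally, inside $\GH(\Sg)$ the Fuchsian locus $\mathcal{F}(\Sg)\cong\mathcal{T}(\Sg)$---the maximal representations into the diagonal $\SO_0(2,1)\hookrightarrow\SO_0(2,2)$---is totally geodesic for $\g_\mathrm T$ and $\g_\mathrm T$ restricts on it to a constant multiple of $\g_{\text{WP}}$ by \cite{tamburelli2021riemannian}; chaining the totally geodesic inclusions $\mathcal{F}(\Sg)\subset\GH(\Sg)\subset\charvarietymaximalstiefel$ and using $\g|_{\GH(\Sg)}=\g_\mathrm T$ yields the Weil--Petersson assertion.

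I expect the main obstacle to be the identification of the fixed set with $\GH(\Sg)$ and its compatibility with the orbifold structure. Since $\GH(\Sg)$ lives in the singular locus of $\charvarietymaximalstiefel$, where $\g$ is only accessible through the $\mathcal C_\rho$-invariant metric on the cohomology chart, the totally-geodesic conclusion must be read in the orbifold sense and verified chart by chart; and one must check on the nose---not merely up to a conformal factor---that $\g$ and $\g_\mathrm T$ coincide on $\GH(\Sg)$, which is precisely where the description of the equivariant maximal surface as sitting inside $\mathbb{H}^{2,1}$ does the work.
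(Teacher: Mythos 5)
Your proposal is correct and follows essentially the same route as the paper: establish that $q$ is an isometry of $\big(\charvarietymaximal,\g\big)$ via Lemmas \ref{lem:qinducedoncohomology} and \ref{lem:invariantmetricgtotallygeodesic} and Proposition \ref{prop:harmonicformsantidesitter}, identify its fixed locus with $\GH(\Sg)$, conclude total geodesy from the fixed-set-of-an-isometry argument, and chain with Tamburelli's results for the identification $\g|_{\GH(\Sg)}=\g_\mathrm T$ and the Weil--Petersson statement on the Fuchsian locus. Your additional remarks (the $\Ad_Q$-eigenspace splitting of $\lso_0(2,3)$ and the uniqueness argument forcing the equivariant maximal surface into $\mathbb{H}^{2,1}$) are elaborations of steps the paper handles by citation, not a different method.
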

\begin{proof}
The first claim follows from the fact that if $\rho\in\charvarietymaximal$ is the holonomy of a GHMC anti-de Sitter $3$-manifolds, then $$T_{[\rho]}\charvarietymaximal=\bigslant{H^1\big(\Sg,\lso_0(2,3)_{\Ad\rho}\big)}{\mathcal C_\rho} \ .$$ Since the metric $\g$ is invariant by the action of the centralizer $\mathcal C_\rho$, it can be restricted to the tangent to $\GH(\Sg)$, which is given by the inclusion of $H^1\big(\Sg,\lso_0(2,2)_{\Ad\rho}\big)$ inside $H^1\big(\Sg,\lso_0(2,3)_{\Ad\rho}\big)$ quotient out by $\mathcal C_\rho$. In particular, the restricted metric coincides with $\g_\mathrm T$ (\cite[\S 2.2]{tamburelli2021riemannian}). The map $q$ defined in (\ref{eq:isometryq}) is an isometry for $\charvarietymaximal$ with respect to $\g$ and the fixed locus is exactly $\GH(\Sg)$. Thus, we can conclude that $(\GH(\Sg),\g_\mathrm T)$ is totally geodesic in $\charvarietymaximalstiefel$ and in $\mathcal{M}_0(\Sg)$ with respect to $\g$. Regarding the second claim, we use the result proved by Tamburelli (\cite[Theorem 2.8]{tamburelli2021riemannian}) which states that $\g_\mathrm T$ restricts to a multiple of the Weil-Petersson metric on the Fuchsian locus, which embeds as a totally geodesic submanifold. Putting this result together with the first claim of the theorem, we obtain that the Fuchsian locus is totally geodesic in $\charvarietymaximalstiefel$ as well.
\end{proof}
\begin{remark}It must be pointed out that in light of Proposition \ref{prop:orbifoldpointsswequaltozero}, we can not conclude the Fuchsian locus is totally geodesic in $\mathcal{M}_0(\Sg)$, since it represents a non-orbifold point in the above connected component.\end{remark}
In the final part of this section, we want to present the argument that allows us to show that $\g$, effectively restricts to a multiple of the Weil-Petersson metric on the Fuchsian locus. Although we already know this to be true from Theorem \ref{thm:antidesittertotallygeodesic}, we want to explain the strategy of proof since it will relate back to the case of the Hitchin component in Section \ref{sec:4.5}. Once again, we follow the approach presented in \cite[\S 2.3]{tamburelli2021riemannian} and \cite{li2013teichm} with the appropriate differences. \\ \\ Suppose that $\rho\in\charvarietymaximalstiefel$ is in the embedded copy of Teichm\"uller space, namely $\rho=\xi\circ\rho_{Fuch}$ where $\rho_{\text{Fuch}}:\pi_1(\Sg)\to\SO_0(2,1)$ is discrete and faithful and $\xi$ is the standard inclusion given by: $$\xi:\SO_0(2,1)\longrightarrow\begin{pmatrix}
    \SO_0(2,1) & 0 \\ 0 & \mathrm{Id}_2
\end{pmatrix}\subset\SO_0(2,3) \ .$$ Such a representation $\rho$ preserves a totally geodesic space-like plane in $\Hypdue$. After explicitly realizing the double cover of the pseudo-hyperbolic space as $$\Hypduetilde=\{\underline x\in\R^{5} \ | \ x_1^2+x_2^2-x_3^2-x_4^2-x_5^2=-1\}$$ we can assume, up to post-composition by an isometry of the space, that $\rho$ preserves the hyperboloid $$\mathcal H=\{\underline x\in\R^5 \ | \ x_1^2+x_2^2-x_3^2=-1, \ x_4=x_5=0\} \ ,$$ which is isometric to the hyperbolic plane $\h^2=\{z\in\C \ | \ \Im(z)>0\}$ via the following map (\cite{li2013teichm}): \begin{equation}\begin{aligned}
    & f:\h^2\to\mathcal H\subset\R^5 \\ & (x,y)\mapsto\bigg(\frac{x}{y},\frac{x^2+y^2-1}{2y},\frac{x^2+y^2+1}{2y},0,0\bigg) \ .
\end{aligned}\end{equation}The standard copy of $\SO_0(2,1)$ inside $\SO_0(2,3)$ induced by $\xi$ is isomorphic to $\PSL(2,\R)$ via the map (\cite{kim2006embedding}): \begin{equation}\begin{aligned}
    \Phi: \ &\PSL(2,\R)\to\SO_0(2,1)<\SO_0(2,3) \\ & \begin{pmatrix}
        a & b \\ c & d
    \end{pmatrix}\mapsto\begin{pmatrix}
        ad+bc & ac-bd & ac+bd & 0 & 0 \\ ab-cd & \frac{a^2-b^2-c^2+d^2}{2} & \frac{a^2+b^2-c^2-d^2}{2} & 0 & 0 \\ ab+cd & \frac{a^2-b^2+c^2-d^2}{2} & \frac{a^2+b^2+c^2+d^2}{2} & 0 & 0 \\ 0 & 0 & 0 & 1 & 0 \\ 0 & 0 & 0 & 0 & 1
    \end{pmatrix} \ .
\end{aligned}\end{equation} Moreover, the induced map at the level of Lie algebra is given by: \begin{equation}\begin{aligned}
    \Phi_\ast & : \Lsl(2,\R)\to\lso_0(2,3) \\ & \begin{pmatrix}
        a & b \\ c & -a
    \end{pmatrix}\mapsto\begin{pmatrix}
        0 & c-b & c+b & 0 & 0 \\ b-c & 0 & 2a & 0 & 0 \\ b+c & 2a & 0 & 0 & 0 \\ 0 & 0 & 0 & 0 & 0 \\ 0 & 0 & 0 & 0 & 0
    \end{pmatrix} \ .
\end{aligned}\end{equation}
To each maximal representation $\rho:\pi_1(\Sg)\to\SO_0(2,3)$ is associated a unique $\rho$-equivariant maximal space-like embedding $\varphi:\widetilde\Sg\to\Hypdue$ (see Theorem \ref{thm:rhoequivariantsurface}). In our case, with $\rho=\xi\circ\rho_{\text{Fuch}}$ a Fuchsian representation, if we set $\Gamma:=\rho\big(\pi_1(\Sg)\big)<\SO_0(2,3)$, then the maximal space-like surface is realized as $\mathcal H/\Gamma$ and is isometric to the hyperbolic surface $\h^2/\Phi^{-1}(\Gamma)$. \\ \\ The metric $\g$ we defined on $H^1\big(\Sg,\lso_0(2,3)_{\Ad\rho}\big)$ depends on the choice of a hyperbolic metric $h$ on $\Sg$ and a scalar product $\iota$ on $\lso_0(2,3)_{\Ad\rho}$. Recall also that $\iota$ is determined by a family of scalar products $\{\iota_{\widetilde x}\}_{\widetilde x\in\widetilde\Sg}$ on $\R^5$, which are obtained by declaring the frame $\{u_1(\widetilde x),u_2(\widetilde x),\varphi(\widetilde x),N_1(\widetilde x),N_2(\widetilde x)\}$ orthonormal. Moreover, the map $f$ gives an explicit $\rho=\xi\circ\rho_{\text{Fuch}}$-equivariant maximal space-like embedding of $\widetilde\Sg$ into $\Hypdue$. After identifying the universal cover $\widetilde\Sg$ with $\h^2$, the coordinates of the tangent and normal vectors to the embedded surface can be computed with respect to the canonical basis of $\R^5$, so that the following matrix representation $H$ of $\iota_z$ can be obtained for any $z\in\h^2$ (\cite{li2013teichm}):
$$H=\begin{pmatrix}
    \frac{2x^2}{y^2}+1 & \frac{x(x^2+y^2-1)}{y^2} & -\frac{x(x^2+y^2+1)}{y^2} & 0 & 0 \\ \frac{x(x^2+y^2-1)}{y^2} & \frac{(x^2+y^2-1)^2}{2y^2}+1 & -\frac{(x^2+y^2+1)(x^2+y^2-1)}{2y^2} & 0 & 0 \\ -\frac{x(x^2+y^2+1)}{y^2} & -\frac{(x^2+y^2+1)(x^2+y^2-1)}{2y^2} & \frac{(x^2+y^2+1)^2}{2y^2}-1 & 0 & 0 \\ 0 & 0 & 0 & 1 & 0 \\ 0 & 0 & 0 & 0 & 1
\end{pmatrix}$$
whose inverse is given by $$H^{-1}=\begin{pmatrix}
    \frac{2x^2}{y^2}+1 & \frac{x(x^2+y^2-1)}{y^2} & \frac{x(x^2+y^2+1)}{y^2} & 0 & 0 \\ \frac{x(x^2+y^2-1)}{y^2} & \frac{(x^2+y^2-1)^2}{2y^2}+1 & \frac{(x^2+y^2+1)(x^2+y^2-1)}{2y^2} & 0 & 0 \\ \frac{x(x^2+y^2+1)}{y^2} & \frac{(x^2+y^2+1)(x^2+y^2-1)}{2y^2} & \frac{(x^2+y^2+1)^2}{2y^2}-1 & 0 & 0 \\ 0 & 0 & 0 & 1 & 0 \\ 0 & 0 & 0 & 0 & 1
\end{pmatrix} \ .$$
\begin{lemma}[\cite{li2013teichm}]\label{lem:matrixscalarproductcomplexversion} For any $z\in\h^2$, after extending the formula of Lemma \ref{lem:matrixscalarproduct} to $M,N\in\lso(5,\C)$ by $\iota_z(M,N)=\tr(M^tH^{-1}\widebar NH)$, we get
    $$\iota_z\bigg(\Phi_\ast\begin{pmatrix}
        -z & z^2 \\ -1 & z 
    \end{pmatrix},\Phi_\ast\begin{pmatrix}
        -z & z^2 \\ -1 & z 
    \end{pmatrix}\bigg)=16 y^2 \ .$$
\end{lemma}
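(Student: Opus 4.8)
The plan is to reduce the computation to the base point $z=i$, where the matrix $H$ degenerates to the identity and the statement is immediate, and then to obtain the value at an arbitrary $z\in\h^2$ by exploiting the homogeneity of the whole construction under the $\PSL(2,\R)$-action.

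First I would make the input explicit. Applying the formula for $\Phi_\ast$ with $a=-z$, $b=z^2$, $c=-1$, the element $M_z:=\Phi_\ast\begin{psmallmatrix}-z & z^2 \\ -1 & z\end{psmallmatrix}$ is the $5\times5$ matrix whose last two rows and columns vanish and whose upper-left $3\times3$ block is $\begin{psmallmatrix}0 & -1-z^2 & z^2-1 \\ z^2+1 & 0 & -2z \\ z^2-1 & -2z & 0\end{psmallmatrix}$. Since $H$ and $H^{-1}$ are block diagonal (the displayed $3\times3$ block together with $I_2$), the quantity $\tr\big(M_z^t H^{-1}\overline{M_z}H\big)$ is unchanged if all four matrices are replaced by their upper-left $3\times3$ blocks. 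At $z=i$ one reads off from the formulas that $H=H^{-1}=I_3$, so $\iota_i(M_i,M_i)=\tr\big(M_i^t\overline{M_i}\big)$, a one-line $3\times3$ matrix product that evaluates to $16$, in agreement with $16y^2$ at $y=1$.

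For general $z\in\h^2$, choose $g\in\PSL(2,\R)$ with $g\cdot i=z$. For the Fuchsian representation $\rho=\xi\circ\rho_{\text{Fuch}}$ the equivariant maximal surface is the totally geodesic copy $\mathcal H\cong\h^2$, and $\Phi(g)\in\SO_0(2,3)$ is a linear isometry of $\Hypdue$ preserving $\mathcal H$, acting on it (via $f$) as the M\"obius map $g$, and fixing $e_4=N_1$, $e_5=N_2$; hence it carries the adapted orthonormal frame at $\varphi(i)$ to one at $\varphi(z)$. This is precisely the equivariance $\iota_z\big(\Ad(\Phi(g))X,\Ad(\Phi(g))Y\big)=\iota_i(X,Y)$ for $X,Y\in\mathfrak{gl}(5,\C)$. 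It then remains to track how $M_z$ transforms under $\Ad(\Phi(g)^{-1})$: writing $\begin{psmallmatrix}-z & z^2 \\ -1 & z\end{psmallmatrix}=-\begin{psmallmatrix}z \\ 1\end{psmallmatrix}\begin{psmallmatrix}z \\ 1\end{psmallmatrix}^{t}J$ with $J=\begin{psmallmatrix}0 & -1 \\ 1 & 0\end{psmallmatrix}$, and using $g^{-1}\begin{psmallmatrix}z \\ 1\end{psmallmatrix}=\lambda\begin{psmallmatrix}i \\ 1\end{psmallmatrix}$ with $\lambda=c'z+d'$ ($c',d'$ the lower row of $g^{-1}$), together with $g^{t}Jg=J$, one finds $\Ad(g^{-1})\begin{psmallmatrix}-z & z^2 \\ -1 & z\end{psmallmatrix}=\lambda^2\begin{psmallmatrix}-i & -1 \\ -1 & i\end{psmallmatrix}$, hence $\Ad(\Phi(g)^{-1})M_z=\lambda^2 M_i$. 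Finally $|\lambda|^2=\Im(z)/\Im(i)=y$ by the standard transformation law of the imaginary part under a M\"obius map, so $\iota_z(M_z,M_z)=|\lambda^2|^2\,\iota_i(M_i,M_i)=|\lambda|^4\cdot16=y^2\cdot16=16y^2$.

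The only genuine work is the elementary $2\times2$ bookkeeping of the third step — the identity $\Ad(g^{-1})\begin{psmallmatrix}-z & z^2 \\ -1 & z\end{psmallmatrix}=\lambda^2\begin{psmallmatrix}-i & -1 \\ -1 & i\end{psmallmatrix}$ and $|\lambda|^2=y$ — together with checking that the adapted frame at $\varphi(z)$ really is the $\Phi(g)$-image of the one at $\varphi(i)$ (the rotational ambiguity within the tangent and normal planes is harmless, since it does not alter the scalar product $\iota$). Alternatively, as in \cite{li2013teichm}, one may bypass the homogeneity argument and simply substitute the displayed $H$, $H^{-1}$ into $\tr\big(M_z^t H^{-1}\overline{M_z}H\big)$ and expand, collecting powers of $x,y$ via $z=x+iy$; there the only nuisance is organising the cross-terms, and the two routes serve as a mutual check.
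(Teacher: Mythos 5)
Your argument is correct, but it takes a different route from the one the paper (following \cite{li2013teichm}) has in mind: the lemma is stated right after the explicit matrices $H$ and $H^{-1}$ precisely so that one substitutes $M_z=\Phi_\ast\begin{psmallmatrix}-z & z^2\\ -1 & z\end{psmallmatrix}$ into $\tr(M_z^tH^{-1}\overline{M_z}H)$ and expands — a brute-force but self-contained verification. You instead reduce to the base point $i$, where $H=\mathrm{Id}_5$ and the trace is immediately $16$, and propagate the value by homogeneity. The two key inputs of your route both check out: the frame $\{u_1,u_2,f(\cdot),e_4,e_5\}$ along $\mathcal H$ is carried by $\Phi(g)$ to an adapted frame (position to position, tangent plane to tangent plane isometrically, $e_4,e_5$ fixed by the block form of $\Phi$), and the residual block-orthogonal ambiguity indeed leaves $\iota$ unchanged, so $\iota_z\circ(\Ad\Phi(g)\times\Ad\Phi(g))=\iota_i$; and the cocycle identity $\Ad(g^{-1})\big({-}vv^tJ\big)=-(g^{-1}v)(g^{-1}v)^tJ$ with $g^tJg=J$ gives $\Ad(\Phi(g)^{-1})M_z=\lambda^2M_i$, $|\lambda|^2=\Im(z)$, whence $|\lambda|^4\cdot 16=16y^2$ by sesquilinearity. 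What your approach buys is conceptual transparency — it explains \emph{why} the answer is a pure power of $y$ (it is forced by $\PSL(2,\R)$-equivariance and the weight of the cocycle $\begin{psmallmatrix}-z&z^2\\-1&z\end{psmallmatrix}$), and the same scheme predicts the shape of the analogous computation in Step 5 of Section \ref{sec:4.5}, where the Fuchsian locus sits inside the Hitchin component via the irreducible $\Phi$ and the frame is no longer $\Phi(g)$-standard, so the extra non-Weil--Petersson terms appear. What the direct substitution buys is that it needs no equivariance bookkeeping and doubles as a check that the displayed $H$, $H^{-1}$ are correct; as you note, the two routes usefully cross-validate each other.
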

\begin{proposition}\label{prop:tangenttoFuchsianlocus}
Let $\rho=\xi\circ\rho_{\text{Fuch}}\in\charvarietymaximalstiefel$ be a Fuchsian representation, then \begin{itemize}
    \item[(1)] the tangent space at $\rho$ to the Fuchsian locus is spanned by the cohomology class of $\psi(z)\mathrm dz\otimes\Phi_\ast\begin{psmallmatrix}
        -z & z^2 \\ -1 & z 
    \end{psmallmatrix}$, where $\psi(z)\mathrm dz^2$ is a holomorphic quadratic differential on $\mathcal H/\Gamma\cong\h^2/\Phi^{-1}(\Gamma)$;
    \item[(2)] the $\lso_0(2,3)_{\Ad\rho}$-valued $1$-forms $\psi(z)\mathrm dz\otimes\Phi_\ast\begin{psmallmatrix}
        -z & z^2 \\ -1 & z 
    \end{psmallmatrix}$ are harmonic representatives in their own cohomology class. 
\end{itemize}
\end{proposition}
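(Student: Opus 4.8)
The plan is to treat the two statements by largely independent arguments: (1) will follow by identifying the tangent space to the Fuchsian locus with the image of $\Phi_\ast$ on cohomology and invoking the classical Eichler--Ahlfors--Bers description of $T\mathcal{T}(\Sg)$, while (2) is a direct computation of $\mathrm d$ and $\delta$ in the trivialization of the flat bundle over $\h^2\cong\widetilde\Sg$, whose only delicate point reduces to the $\PSL(2,\R)$-computation already carried out in \cite{li2013teichm}.

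For (1): by construction (Section~\ref{sec:4.2}) the Fuchsian locus $\mathcal F(\Sg)\subset\charvarietymaximalstiefel$ is the image of the real-analytic embedding $\mathcal{T}(\Sg)\hookrightarrow\charvarietymaximalstiefel$, $[\rho_{\text{Fuch}}]\mapsto[\xi\circ\rho_{\text{Fuch}}]$, so its tangent space at $\rho=\xi\circ\rho_{\text{Fuch}}$ is the image of
$$\Phi_\ast\colon H^1\big(\Sg,\Lsl(2,\R)_{\Ad\rho_{\text{Fuch}}}\big)\longrightarrow H^1\big(\Sg,\lso_0(2,3)_{\Ad\rho}\big)\ ,$$
and this map is injective, since $\Phi_\ast\Lsl(2,\R)$ is an $\Ad\rho_{\text{Fuch}}$-submodule of $\lso_0(2,3)$ with an invariant complement by reductivity of $\PSL(2,\R)$. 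It then suffices to recall the classical fact that, after identifying $\widetilde\Sg\cong\h^2$, the complexified tangent space $H^1\big(\Sg,\Lsl(2,\C)_{\Ad\rho_{\text{Fuch}}}\big)$ is spanned by the classes of the Eichler cocycles $\psi(z)\,\mathrm dz\otimes v(z)$, $v(z)=\begin{psmallmatrix}-z&z^2\\-1&z\end{psmallmatrix}$, as $\psi\,\mathrm dz^2$ ranges over $H^0(X,K_X^2)$ (see \cite{li2013teichm} and the references therein); here $\psi$, $\mathrm dz$ and $v$ carry automorphic weights $4$, $-2$ and $-2$, so $\psi(z)\,\mathrm dz\otimes v(z)$ is $\Gamma$-equivariant and descends to a closed bundle-valued $1$-form on $\Sg$. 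Since $\Phi_\ast$ sends $\psi(z)\,\mathrm dz\otimes v(z)$ to $\psi(z)\,\mathrm dz\otimes\Phi_\ast v(z)$, this gives (1); alternatively, once (2) is proved, (1) follows from a dimension count, the forms $\psi(z)\,\mathrm dz\otimes\Phi_\ast v(z)$ being harmonic, tangent to $\mathcal F(\Sg)$, and linearly independent by the positivity $\iota_z(\Phi_\ast v(z),\Phi_\ast v(z))=16y^2$ of Lemma~\ref{lem:matrixscalarproductcomplexversion}.

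For (2): over $\h^2$ the flat connection is the ordinary $\mathrm d$. The $\mathrm d$-closedness is immediate, since $\mathrm d\big(\psi(z)\,\mathrm dz\otimes\Phi_\ast v(z)\big)=\mathrm d(\psi\,\mathrm dz)\otimes\Phi_\ast v(z)-\psi\,\mathrm dz\wedge\mathrm d(\Phi_\ast v(z))$ and both terms vanish by holomorphicity of $\psi$ and of the entries of $\Phi_\ast v(z)$ (only $\mathrm dz$-components appear). For the $\delta$-closedness it is equivalent to show $\mathrm d\,{\ast}\,\#\big(\psi(z)\,\mathrm dz\otimes\Phi_\ast v(z)\big)=0$; using $\ast\,\mathrm dz=-i\,\mathrm dz$ and that $\#$ is fibrewise, the left-hand side equals $i\,\psi\,\partial_{\bar z}\big(\#\Phi_\ast v(z)\big)\,\mathrm dz\wedge\mathrm d\bar z$, so the task is to prove that $\#\Phi_\ast v(z)$ is holomorphic in $z$. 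One writes $\#$ through the matrix $H=H(z)$ of $\iota_z$ by Lemma~\ref{lem:matrixscalarproduct}, and uses the block form $H(z)=\diag\big(\widehat H(z),\mathrm{Id}_2\big)$ visible in the explicit $H$ displayed above, where $\widehat H$ is the frame matrix of the totally geodesic copy $\mathcal H\cong\h^2\subset\R^{2,1}$ parametrized by $f$, together with the fact that $\Phi_\ast v(z)$ is supported in the corresponding $\lso_0(2,1)$-block. Since that block is $\iota_z$-orthogonal to its complement in $\lso_0(2,3)$ for every $z$, the components of $\#\Phi_\ast v(z)$ along the complement vanish identically, and those along $\lso_0(2,1)$ reduce to the computation of \cite{li2013teichm}, which yields precisely that $z\mapsto\iota_z\big(\Phi_\ast v(z),\cdot\big)$ is holomorphic on that block. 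Hence $\partial_{\bar z}\big(\#\Phi_\ast v(z)\big)=0$, so $\psi(z)\,\mathrm dz\otimes\Phi_\ast v(z)$ is closed and co-closed, i.e.\ the harmonic representative of its class.

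The main obstacle is the co-closedness in (2): since $\iota_z$ is the point-dependent scalar product coming from the maximal surface rather than the Killing form, one must genuinely exploit the explicit matrix $H(z)$, its block structure, and the special algebraic shape of $v(z)$ (equivalently the identity of Lemma~\ref{lem:matrixscalarproductcomplexversion}) to see that the anti-holomorphic derivative of $\#\Phi_\ast v(z)$ drops out; the remaining ingredients are classical or follow from type considerations.
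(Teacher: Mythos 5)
Your proposal is correct and follows essentially the same route as the paper: part (1) via the classical Eichler description of the tangent space to Teichm\"uller space together with the injectivity of $\Phi_\ast$ on cohomology, and part (2) by reducing $\delta$-closedness to the holomorphicity in $z$ of $\#\Phi_\ast\begin{psmallmatrix}-z&z^2\\-1&z\end{psmallmatrix}$. The only difference is in how that key holomorphicity is established: the paper computes $\#E_1,\#E_2,\#E_3$ explicitly and checks that the combination $z^2\#E_1-\#E_3-2z\#E_2$ has holomorphic coefficients, whereas you obtain the same conclusion from the block-diagonal form of $H$, the resulting $\iota_z$-orthogonality of the $\lso_0(2,1)$-block to its complement, and the identical $3\times 3$ computation already carried out in \cite{li2013teichm}.
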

\begin{proof}
First recall that if $\rho$ is in the copy of Teichm\"uller space, then it is a $\Z_2\times\Z_2$-orbifold point in $\charvarietymaximalstiefel$ (Proposition \ref{prop:orbifoldHiggs}). In particular, $$T_{[\rho]}\charvarietymaximalstiefel\cong\bigslant{H^1(\Sg,\lso_0(2,3)_{\Ad\rho})}{\mathcal C_\rho} \ , $$ where the action of the centralizer on the first cohomology group is given by conjugation on the matrix part. Notice that, if $\psi(z)\mathrm dz^2$ is a holomorphic quadratic differential on $\mathcal H/\Gamma\cong\h^2/\Phi^{-1}(\Gamma)$ then the $\lso_0(2,3)_{\Ad\rho}$-valued $1$-form $\psi(z)\mathrm dz\otimes\Phi_\ast\begin{psmallmatrix}
        -z & z^2 \\ -1 & z 
    \end{psmallmatrix}$ is invariant by the action of $\Phi\big(\mathcal C_\rho\big)=\{A,B,C,\mathrm{Id}_5\}$, where $A=\diag(-1,-1,-1,-1,1), B=\diag(1,1,1,-1,-1)$ and $C=A\cdot B=\diag(-1,-1,-1,1,-1)$ (see Lemma \ref{lem:centralizerorbifoldpoint}). \newline
$(1)$ Now let us consider the corresponding Fuchsian representation $\widetilde\rho:=\Phi^{-1}(\rho)$ into $\PSL(2,\R)$. The claim is obtained from the $\mathcal C_\rho$-invariance describe above and from the fact (\cite{goldman1984symplectic}) that the tangent space to Teichm\"uller space is generated by the $\Lsl(2,\R)_{\Ad\widetilde\rho}$-valued $1$-forms $\psi(z)\mathrm dz\otimes\begin{psmallmatrix}
        -z & z^2 \\ -1 & z 
    \end{psmallmatrix}$ and thus, the tangent space to the Fuchsian locus is generated by the inclusion of $H^1\big(\Sg,\Lsl(2,\R)_{\Ad\widetilde\rho}\big)$ inside $H^1\big(\Sg,\lso_0(2,3)_{\Ad\rho}\big)$ induced by $\Phi_\ast$.\newline $(2)$ Requiring $\psi(z)\mathrm dz\otimes\Phi_\ast\begin{psmallmatrix}
        -z & z^2 \\ -1 & z 
    \end{psmallmatrix}$ to be harmonic is equivalent to prove that it is $\mathrm d$-closed and $\delta$-closed. The argument for the first claim can be found in \cite[Lemma 5]{li2013teichm} and it applies the same way in our case. Regarding $\delta$-closedness, we follow the strategy of the above lemma. As $\delta$ was defined (Section \ref{sec:3.1}), it is sufficient to show that $\mathrm d\ast\#\bigg(\psi(z)\mathrm dz\otimes\Phi_\ast\begin{psmallmatrix}
        -z & z^2 \\ -1 & z 
    \end{psmallmatrix}\bigg)=0$. By linearity, \begin{align*}
        \#\bigg(\psi(z)\mathrm dz\otimes\Phi_\ast\begin{psmallmatrix}
        -z & z^2 \\ -1 & z 
    \end{psmallmatrix}\bigg)&=z^2\psi(z)\mathrm dz\otimes\#\big(\Phi_\ast\begin{psmallmatrix}
        0 & 1 \\ 0 & 0 
    \end{psmallmatrix}\big)-\psi(z)\mathrm dz\otimes\#\big(\Phi_\ast\begin{psmallmatrix}
        0 & 0 \\ 1 & 0
    \end{psmallmatrix}\big) \\ &-2z\psi(z)\mathrm dz\otimes\#\bigg(\Phi_\ast\begin{psmallmatrix}
        \frac{1}{2} & 0 \\ 0 & -\frac{1}{2}   \end{psmallmatrix}\bigg) \ .
    \end{align*} Let $\{E_j\}_{j=1}^{10}$ be the basis for $\lso_0(2,3)$ introduced in the proof of Theorem \ref{thm:metriccompatiblewithorbifold}, and notice that $$E_1=\Phi_\ast\begin{psmallmatrix}
        0 & 1 \\ 0 & 0 
    \end{psmallmatrix}=\begin{psmallmatrix}
        0 & -1 & 1 & 0 & 0 \\ 1 & 0 & 0 & 0 & 0 \\ 1 & 0 & 0 & 0 & 0 \\ 0 & 0 & 0 & 0 & 0 \\ 0 & 0 & 0 & 0 & 0
    \end{psmallmatrix},\quad E_2=\Phi_\ast\begin{psmallmatrix}
        \frac{1}{2} & 0 \\ 0 & -\frac{1}{2}   \end{psmallmatrix}=\begin{psmallmatrix}
        0 & 0 & 0 & 0 & 0 \\ 0 & 0 & 1 & 0 & 0 \\ 0 & 1 & 0 & 0 & 0 \\ 0 & 0 & 0 & 0 & 0 \\ 0 & 0 & 0 & 0 & 0
    \end{psmallmatrix},$$ $$E_3=\Phi_\ast\begin{psmallmatrix}
        0 & 0 \\ 1 & 0
    \end{psmallmatrix}=\begin{psmallmatrix}
        0 & 1 & 1 & 0 & 0 \\ -1 & 0 & 0 & 0 & 0 \\ 1 & 0 & 0 & 0 & 0 \\ 0 & 0 & 0 & 0 & 0 \\ 0 & 0 & 0 & 0 & 0
    \end{psmallmatrix} \ .$$ The operator $\#$ is given by $\#M=\sum_{i=1}^{10}\iota(M,E_i)E^*_i$, where $E^*_i$ is defined by setting $E^*_i(E_j)=\delta_i^j$. Using Lemma \ref{lem:matrixscalarproduct} to compute $\iota(E_i,E_j)$, we get \begin{align*}
        &\#E_1=\frac{4}{y^2}\big(E_1^*-x^2E_3^*+xE_2^*\big) \\ & \#E_2=\frac{4}{y^2}\big(xE_1^*-x(x^2+y^2)E_3^*-x(x^2+y^2)E_2^*\big) \\ & \#E_3=\frac{4}{y^2}\big(-x^2E_1^*+(x^2+y^2)E_3^*-x(x^2+y^2)E_2^*\big) \ .
    \end{align*} We note that $z$ is a conformal coordinate for the induced metric on the $\rho$-equivariant maximal surface in $\Hypdue$, thus from the definition of Hodge star operator we get $\ast\mathrm dx=\mathrm dy$ and $\ast\mathrm dy=-\mathrm dx$. In particular, after extending the operator to complex $1$-forms by complex anti-linearity ($\ast(i\alpha)=-i\ast\bar\alpha$), we obtain that $\ast\psi(z)\mathrm dz=i\overline{\psi(z)}\mathrm d\bar z$. Since $\psi(z)$
 is holomorphic and $\mathrm d=\partial+\bar\partial$, we have \begin{align*}
     &\mathrm d\ast\Big(-4\psi(z)\mathrm dz\otimes E_1^*+4z^2\psi(z)\mathrm dz\otimes E_3^*-4z\psi(z)\mathrm dz\otimes E_2^*\Big) \\ &=\mathrm d\Big(-4i\overline{\psi(z)}\mathrm d\bar z\otimes E_1^*+4i\overline{\psi(z)}\bar z^2\mathrm d\bar z\otimes E_3^*-4i\overline{\psi(z)}\bar z\mathrm d\bar z\otimes E_2^*\Big)=0 \ .
 \end{align*}In other words, $\psi(z)\mathrm dz\otimes\Phi_\ast\begin{psmallmatrix}
        -z & z^2 \\ -1 & z 
    \end{psmallmatrix}$ is $\delta$-closed, hence harmonic.
 \end{proof}
\begin{theorem}\label{thm:metriconFuchsianlocus}
    The metric $\g$ on the Fuchsian locus inside $\charvarietymaximalstiefel$ restricts to a multiple of the Weil-Petersson metric on Teichm\"uller space.
\end{theorem}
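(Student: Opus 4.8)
The plan is to reduce everything to the explicit harmonic representatives furnished by Proposition~\ref{prop:tangenttoFuchsianlocus} and then to recognize the resulting integral as the Weil-Petersson pairing. Fix a Fuchsian point $\rho=\xi\circ\rho_{\text{Fuch}}\in\charvarietymaximalstiefel$ and set $\Gamma:=\rho(\pi_1(\Sg))$. The first observation is that the $\rho$-equivariant maximal space-like surface of Theorem~\ref{thm:rhoequivariantsurface} is the totally geodesic hyperboloid $\mathcal H/\Gamma$, isometric to $\h^2/\Phi^{-1}(\Gamma)$, so the induced metric $h$ on $\Sg$ is \emph{exactly} the hyperbolic metric $\vl\mathrm dz\vl^2/y^2$ of curvature $-1$, and $z$ is a conformal coordinate for it. This is the step where the uniqueness in Theorem~\ref{thm:rhoequivariantsurface} is essential: it identifies the abstract equivariant maximal surface with the concrete $\mathcal H/\Gamma$. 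By Proposition~\ref{prop:tangenttoFuchsianlocus}, the tangent space at $\rho$ to the Fuchsian locus is spanned by the classes of the harmonic forms $\alpha_\psi:=\psi(z)\,\mathrm dz\otimes M_z$, where $M_z:=\Phi_\ast\begin{psmallmatrix}-z&z^2\\-1&z\end{psmallmatrix}$ is independent of $\psi$, one for each holomorphic quadratic differential $\psi(z)\,\mathrm dz^2$ on $\h^2/\Phi^{-1}(\Gamma)$. Since $\g$ is evaluated on harmonic representatives by~\eqref{eq:definitiongoncohomology}, it suffices to compute $g(\alpha_\psi,\alpha_\eta)$ via~\eqref{eq:definitiong} and match it against $\g_{\text{WP}}(\psi\,\mathrm dz^2,\eta\,\mathrm dz^2)$.

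Next I would split $g(\alpha_\psi,\alpha_\eta)$ into its matrix part and its form part. The matrix part is $\iota_z(M_z,M_z)$, which equals $16y^2$ by Lemma~\ref{lem:matrixscalarproductcomplexversion} (equivalently, expand $M_z=z^2E_1-E_3-2zE_2$ in the basis of the proof of Theorem~\ref{thm:metriccompatiblewithorbifold} and use the pairings $\iota(E_i,E_j)$ computed in the proof of Proposition~\ref{prop:tangenttoFuchsianlocus}). For the form part, since $z$ is conformal for $h$ we have $\ast\,\mathrm dx=\mathrm dy$ and $\ast\,\mathrm dy=-\mathrm dx$, hence $\ast(\eta(z)\,\mathrm dz)=i\,\overline{\eta(z)}\,\mathrm d\bar z$ and $\psi\,\mathrm dz\wedge\ast(\eta\,\mathrm dz)=i\,\psi\bar\eta\,\mathrm dz\wedge\mathrm d\bar z=2\,\psi\bar\eta\,\mathrm dx\wedge\mathrm dy$. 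Combining the two contributions,
\[
g(\alpha_\psi,\alpha_\eta)=32\int_\Sg y^2\,\psi(z)\overline{\eta(z)}\,\mathrm dx\wedge\mathrm dy \ .
\]

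It then remains to recognize the right-hand side as a multiple of $\g_{\text{WP}}$. Identifying $T_{[\rho_{\text{Fuch}}]}\mathcal T(\Sg)$ with holomorphic quadratic differentials on the hyperbolic surface $\h^2/\Phi^{-1}(\Gamma)$ of conformal factor $\rho_h=y^{-2}$, one has $\g_{\text{WP}}(\psi\,\mathrm dz^2,\eta\,\mathrm dz^2)=\int_\Sg\rho_h^{-2}\,\psi\bar\eta\,\mathrm dA_h=\int_\Sg y^2\,\psi\bar\eta\,\mathrm dx\wedge\mathrm dy$ in the standard normalization, so $g$ restricted to the Fuchsian locus is $32\,\g_{\text{WP}}$, the precise constant being convention-dependent. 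Passing to the quotient by the centralizer $\mathcal C_\rho$, which fixes the forms $\alpha_\psi$ (as in the proof of Proposition~\ref{prop:tangenttoFuchsianlocus}), carries this over to $\g$ on $T_{[\rho]}\charvarietymaximalstiefel$. The identity is consistent with Theorem~\ref{thm:antidesittertotallygeodesic}; the virtue of the direct computation is that it makes the proportionality constant explicit and sets up the analogous (and, as it turns out, different) analysis of the Hitchin component in Section~\ref{sec:4.5}.

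The one genuinely delicate point is the bookkeeping of the complexifications: one must stay consistent with the Hermitian extension $\iota_z(M,N)=\tr(M^tH^{-1}\bar NH)$ of Lemma~\ref{lem:matrixscalarproduct}, with the anti-linear extension $\ast(i\alpha)=-i\ast\bar\alpha$ of the Hodge star, and with the conversion between the complexified pairing of the $\alpha_\psi$ and the real metric $g$ of~\eqref{eq:definitiong} on genuine tangent vectors (handled exactly as in \cite{li2013teichm} and \cite{tamburelli2021riemannian}); this is where stray factors of $2$ and signs can slip in and where the exact value of the constant is pinned down. The only geometric input beyond that, namely that $h$ is the hyperbolic metric, reduces immediately to the uniqueness half of Theorem~\ref{thm:rhoequivariantsurface}.
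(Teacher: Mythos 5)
Your proposal is correct and follows essentially the same route as the paper: reduce to the harmonic representatives $\psi(z)\,\mathrm dz\otimes\Phi_\ast\begin{psmallmatrix}-z&z^2\\-1&z\end{psmallmatrix}$ provided by Proposition \ref{prop:tangenttoFuchsianlocus}, insert the value $\iota_z=16y^2$ from Lemma \ref{lem:matrixscalarproductcomplexversion} and the identity $\ast(\psi'\,\mathrm dz)=i\overline{\psi'}\,\mathrm d\bar z$ into the definition \eqref{eq:definitiong}, and identify the resulting integral as $32\,\g_{\text{WP}}$. The additional remarks on the hyperbolic induced metric, the centralizer quotient, and the Hermitian-extension bookkeeping are consistent with (and implicit in) the paper's argument.
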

\begin{proof}
By Lemma \ref{prop:tangenttoFuchsianlocus} and by (\ref{eq:definitiongoncohomology}), it is enough to prove that $$g\bigg(\psi(z)\mathrm dz\otimes\Phi_\ast\begin{psmallmatrix}
        -z & z^2 \\ -1 & z 
    \end{psmallmatrix},\psi'(z)\mathrm dz\otimes\Phi_\ast\begin{psmallmatrix}
        -z & z^2 \\ -1 & z 
    \end{psmallmatrix}\bigg)=k\cdot\g_{\text{WP}}(\psi,\psi'), \quad k\in\R$$ where, by abuse of notation, we are denoting with $g$ its extension to a hermitian metric on $\lso_0(5,\C)$-valued $1$-forms. According to the definition of $g$ (see (\ref{eq:definitiong})) and Lemma \ref{lem:matrixscalarproductcomplexversion}, we get \begin{align*}
        &g\bigg(\psi(z)\mathrm dz\otimes\Phi_\ast\begin{psmallmatrix}
        -z & z^2 \\ -1 & z 
    \end{psmallmatrix},\psi'(z)\mathrm dz\otimes\Phi_\ast\begin{psmallmatrix}
        -z & z^2 \\ -1 & z 
    \end{psmallmatrix}\bigg) \\ &=\Ree\int_\Sg\iota_z\bigg(\Phi_\ast\begin{psmallmatrix}
        -z & z^2 \\ -1 & z 
    \end{psmallmatrix},\Phi_\ast\begin{psmallmatrix}
        -z & z^2 \\ -1 & z 
    \end{psmallmatrix}\bigg)\psi(z)\mathrm dz\wedge\ast\big(\psi'(z)\mathrm dz\big) \\ &=\Ree\int_\Sg\iota_z\bigg(\Phi_\ast\begin{psmallmatrix}
        -z & z^2 \\ -1 & z 
    \end{psmallmatrix},\Phi_\ast\begin{psmallmatrix}
        -z & z^2 \\ -1 & z 
    \end{psmallmatrix}\bigg)\psi(z)\mathrm dz\wedge\big(i\overline{\psi'(z)}\mathrm d\bar z\big) \\ &=\Ree\int_\Sg 16i\psi(z)\overline{\psi'(z)}y^2\mathrm dz\wedge\mathrm d\bar z \\ &=32\g_{\text{WP}}(\psi,\psi') \ .
    \end{align*}
\end{proof}

\subsection{A note about the Hitchin component}\label{sec:4.5}
Let $\Hit(\Sg)$ be the Hitchin component for $\SO_0(2,3)$ (see Section \ref{sec:4.1}), and recall that it is defined as the connected component of $\charvarietymaximal$ consisting of all surface group representations into $\SO_0(2,3)$ that can be deformed to Fuchsian ones, namely those that can be written as $\Phi\circ\rho_{\text{Fuch}}$ where $\rho_{Fuch}$ is a Fuchsian representation into $\PSL(2,\R)$ and $\Phi$ is the unique irreducible representation of $\PSL(2,\R)$ in $\SO_0(2,3)$. The Fuchsian representations in $\Hit(\Sg)$ form a submanifold $\mathcal F(\Sg)$, called the Fuchsian locus, which is isomorphic to a copy of Teichm\"uller space of the surface. Moreover, having a smooth manifold structure, $\Hit(\Sg)$ carries a well-defined metric $\g$ (Theorem \ref{thm:metricGothencomponentHitchin}). \begin{theorem}
The Fuchsian locus $\mathcal F(\Sg)$ endowed with the restricted metric $\g|_{\mathcal F(\Sg)}$ is a totally geodesic submanifold of $\big(\Hit(\Sg), \g\big)$, and $\g|_{\mathcal F(\Sg)}$ is not (a multiple of) the Weil-Petersson metric on Teichm\"uller space.
\end{theorem}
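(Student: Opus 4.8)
The statement splits into two essentially independent parts. For the first, that $\mathcal F(\Sg)$ is totally geodesic, I would use the ``fixed locus of an isometry'' scheme of Section~\ref{sec:4.3}. By Theorem~\ref{thm:rhoequivariantsurface}, the non-abelian Hodge correspondence and the uniqueness of minimal/maximal surfaces in rank two (\cite{collier2019geometry,labourie2017cyclic}), every $\rho\in\Hit(\Sg)$ is determined by a pair $(X,q_4)$, with $X$ the conformal class of the induced metric $h$ on the unique $\rho$-equivariant maximal surface $\varphi\colon\widetilde\Sg\to\Hypdue$ and $q_4\in H^0(X,K_X^4)$ the holomorphic quartic differential carrying its second fundamental form; in this parametrization $\mathcal F(\Sg)=\{q_4=0\}$. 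Consider the circle action $R_\vartheta\colon(X,q_4)\mapsto(X,e^{i\vartheta}q_4)$, whose fixed locus is exactly $\mathcal F(\Sg)$ (and $\mathcal F(\Sg)$ is connected, hence a whole component of it). I claim each $R_\vartheta$ is an isometry of $\big(\Hit(\Sg),\g\big)$: the maximal surface attached to $(X,e^{i\vartheta}q_4)$ is obtained from that attached to $(X,q_4)$ by rotating the normal bundle by $\vartheta$, so the induced metric $h$ and the normal connection are unchanged while the second fundamental form is composed with this normal rotation; since a normal rotation acts fibrewise by an $\iota$-orthogonal transformation, it leaves the scalar product $\iota$ on $\R^5$ unchanged. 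Hence the induced map $R_{\vartheta\ast}$ on $\lso_0(2,3)$-valued $1$-forms preserves the pairing $g$ of~\eqref{eq:definitiong}, intertwines the two flat connections, and — by the same $\#$-operator argument as in the proofs of Theorem~\ref{thm:metriccompatiblewithorbifold} and Proposition~\ref{prop:harmonicformsantidesitter}, now with the conjugating element a point-dependent rotation of the normal plane rather than a fixed matrix of $\mathrm O(2,3)$ — intertwines $\ast$, $\#$, hence $\delta$ and $\Delta$, so it carries harmonic representatives to harmonic representatives. Therefore $R_\vartheta$ is an isometry and $\mathcal F(\Sg)=\mathrm{Fix}(R_\vartheta)$ is totally geodesic.

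It is worth stressing that, in contrast with Section~\ref{sec:4.3}, this isometry is \emph{not} a conjugation by a fixed element of $\mathrm O(2,3)$: one checks that the non-trivial outer automorphism of $\SO_0(2,3)$ restricts to the outer (not inner) automorphism of the principal $\SO_0(2,1)$, so no matrix conjugation fixes $\mathcal F(\Sg)$ pointwise. Accordingly, I expect the delicate point of this half to be the set-up rather than the Hodge-theoretic bookkeeping: one must make the $(X,q_4)$-parametrization precise and check that $R_{\vartheta\ast}$ genuinely descends to a bundle automorphism of $\lso_0(2,3)_{\Ad\rho}$ intertwining the flat structures.

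For the second half I would repeat, for the principal embedding, the computation of Section~\ref{sec:4.3}. Let $\rho_0=\Phi\circ\rho_{\text{Fuch}}$ with $\Phi\colon\PSL(2,\R)\to\SO_0(2,3)$ the irreducible representation; as in Proposition~\ref{prop:tangenttoFuchsianlocus}, $T_{\rho_0}\mathcal F(\Sg)$ is spanned by the classes of the $\mathrm d$-closed cocycles $\psi(z)\,\mathrm dz\otimes\Phi_\ast\begin{psmallmatrix}-z&z^2\\-1&z\end{psmallmatrix}$, with $\psi\,\mathrm dz^2$ holomorphic and $\Phi_\ast\colon\Lsl(2,\R)\to\lso_0(2,3)$ the principal embedding of Lie algebras. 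The unique maximal surface is again a totally geodesic $\mathbb H^2$, but now sitting inside $\Hypdue$ via $\Sym^4$, so the adapted frame has no constant normal block and the matrix $H$ of $\iota_z$ is a genuine $5\times5$ moving frame; in particular $\iota$ no longer respects the decomposition of $\lso_0(2,3)$ into $\mathrm{ad}\,\Phi_\ast\Lsl(2,\R)$-isotypic summands (the $3$- and $7$-dimensional pieces), unlike the block-diagonal $H$ of the $\SO_0(2,2)$ case. As a consequence — and this is the whole point — the cocycle $\psi\,\mathrm dz\otimes\Phi_\ast X_z$, although $\mathrm d$-closed, fails to be $\delta$-closed, so its harmonic representative acquires a non-zero correction $\mathrm d\beta_\psi$ with a component transverse to $\Phi_\ast\Lsl(2,\R)$. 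Writing the orthogonal splitting $\psi\,\mathrm dz\otimes\Phi_\ast X_z=(\text{harmonic part})+\mathrm d\beta_\psi$, and using that an $\SO_0(2,1)$-equivariance argument still forces $\iota_z(\Phi_\ast X_z,\Phi_\ast X_z)$ to be a \emph{constant} multiple of $y^2$ (so that the cocycle pairs with itself to $32\,\g_{\mathrm{WP}}$, as in Theorem~\ref{thm:metriconFuchsianlocus}), one gets
\[
\g|_{\mathcal F(\Sg)}(\psi,\psi)=32\,\g_{\mathrm{WP}}(\psi,\psi)-\|\mathrm d\beta_\psi\|_g^2,
\]
and it then suffices to check that the positive-semidefinite Hermitian form $\psi\mapsto\|\mathrm d\beta_\psi\|_g^2$ — gotten by solving a fixed linear elliptic equation with right-hand side $\delta\big(\psi\,\mathrm dz\otimes\Phi_\ast X_z\big)$ — is non-zero and not proportional to $\g_{\mathrm{WP}}$.

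The main obstacle is precisely this last step: one must describe the restriction of the geometric scalar product $\iota$ to the principal $\Lsl_2$-copy and its transverse complement, together with the off-diagonal coupling it produces, and then produce the harmonic representative (equivalently the operator $\psi\mapsto\beta_\psi$) explicitly enough to see non-proportionality — for which it is enough to exhibit two directions along which $\|\mathrm d\beta_\psi\|_g^2/\g_{\mathrm{WP}}(\psi,\psi)$ differs, or to check that one explicitly computable quantity does not vanish. The computation is finite, through Lemma~\ref{lem:matrixscalarproduct} and the $\Sym^4$-model of the principal embedding, but bookkeeping-heavy; and it is exactly here that the Hitchin component parts company both with the $\SO_0(2,2)$-case and with the $\SL(3,\R)$-case of~\cite{li2013teichm}, in which the analogous cocycle is already harmonic and the restriction is a multiple of the Weil--Petersson metric.
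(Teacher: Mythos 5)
Your first half is the same strategy as the paper's Step 4 (exhibit an isometry of $\big(\Hit(\Sg),\g\big)$ whose fixed locus is $\mathcal F(\Sg)$), but you replace the involution the paper actually uses --- $(J,q_4)\mapsto(J,-q_4)$, which the paper realizes as flipping one normal vector of the maximal surface, i.e.\ as conjugation by $Q=\diag(1,1,1,1,-1)\in\mathrm O(2,3)$, so that the machinery of Lemma \ref{lem:invariantmetricgtotallygeodesic} and Proposition \ref{prop:harmonicformsantidesitter} applies --- by the circle action $q_4\mapsto e^{i\vartheta}q_4$, explicitly asserting that no matrix conjugation can work. That assertion contradicts the paper's construction, and your substitute (a point-dependent rotation of the normal bundle) is a gauge transformation, not a conjugation: you would have to prove from scratch that it preserves the induced metric, the scalar product $\iota$, and harmonic representatives, none of which is carried out. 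So this half is a plausible variant, but it is a sketch whose hard steps are exactly the ones deferred.

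The second half contains a genuine gap that is fatal to the argument as written. Your mechanism for producing a deviation from Weil--Petersson is the claim that $\psi(z)\mathrm dz\otimes\Phi_\ast\begin{psmallmatrix}-z&z^2\\-1&z\end{psmallmatrix}$ fails to be $\delta$-closed, so that a correction $\mathrm d\beta_\psi$ appears. The paper's Step 3 proves the opposite: this cocycle \emph{is} harmonic. This can be seen structurally rather than by brute force: since $b=c$ for $X_i=\begin{psmallmatrix}-i&-1\\-1&i\end{psmallmatrix}$ and $\big(\Phi_\ast\begin{psmallmatrix}a&b\\c&-a\end{psmallmatrix}\big)^t=\Phi_\ast\begin{psmallmatrix}a&c\\b&-a\end{psmallmatrix}$, the matrix $\Phi_\ast X_i$ is symmetric, hence $\#(\Phi_\ast X_i)$ is represented under the trace form by $\Phi_\ast X_i$ itself; by equivariance the same holds at every $z$, so every coefficient $\iota_z(\Phi_\ast X_z,E_j)$ is a holomorphic polynomial in $z$ and $\mathrm d\ast\#$ vanishes. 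With harmonicity in hand, your correction term is zero, and your own equivariance observation that $\iota_z(\Phi_\ast X_z,\Phi_\ast X_z)$ must equal $c\,y^2$ would then give $\g|_{\mathcal F(\Sg)}=\mathrm{const}\cdot\g_{\text{WP}}$ --- i.e.\ your two premises combined prove the \emph{negation} of the second assertion. Note also that your equivariance claim is in direct conflict with the paper's Step 5, where the whole source of the ``other terms'' is precisely that $\iota_z(\Phi_\ast X_z,\Phi_\ast X_z)=20\big(1+x^4+y^4+2x^2(1+y^2)\big)^2/y^2$ is \emph{not} of the form $c\,y^2$; you cannot simultaneously keep the theorem, your equivariance argument, and harmonicity. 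As it stands, your proposal does not establish that $\g|_{\mathcal F(\Sg)}$ differs from a multiple of the Weil--Petersson metric, and the tension you have (correctly) put your finger on --- the $\Phi(\PSL(2,\R))$-homogeneity of the Veronese surface forces $\iota_z(\Phi_\ast X_z,\Phi_\ast X_z)/y^2$ to be constant --- must be resolved against the explicit formula of Step 5 before either proof of the second claim can be considered complete.
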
 It is somewhat surprising that the restriction of $\g$ to the Fuchsian locus is not a multiple of the Weil-Petersson metric, unlike in the case of $\SL(3,\R)$ (\cite{li2013teichm}). In what follows, we will not give all the details of the proof of the above theorem but the strategy that led to the formulation of its statement, which can be compared to what we explained (Section \ref{sec:4.3}) for the Fuchsian locus in the connected component $\charvarietymaximalstiefel$. \\ \\ \underline{\emph{Step 1: the $\rho$-equivariant parametrization of $\widetilde\Sg$ as a maximal surface in $\Hypdue$, for $\rho\in\mathcal F(\Sg)$}}\vspace{0.5em}\newline $\bullet$ Let us denote with $\R_2[x,y]$ the real vector space of degree two homogeneous polynomials in two variables, with a basis given by $\{x^2,xy,y^2\}$. On this space, one can introduce a non-degenerate bi-linear form of signature $(2,1)$ by $b_2(v,w)=\frac{1}{2}v_2w_2-v_1w_3-v_3w_1$, in such a way that the image of the irreducible representation \begin{align*}
    \widetilde\tau:& \ \PSL(2,\R)\longrightarrow\PSL(3,\R) \\ &\begin{pmatrix}
        a & b \\ c & d
    \end{pmatrix}\longmapsto \begin{pmatrix}
        a^2 & ab & b^2 \\ 2ac & ad+bc & 2bd \\ c^2 & cd & d^2
    \end{pmatrix}
\end{align*}preserves the associated quadratic form $Q_2:=\begin{psmallmatrix}
    0 & 0 & -1 \\ 0 & 1/2 & 0 \\ -1 & 0 & 0
\end{psmallmatrix}$ in the above basis. In other words, for any $A\in\PSL(2,\R)$ the matrix $\widetilde\tau(A)$ belongs to $\widetilde{\SO}_0(2,1)$ which is the identity component of $$\widetilde{\SO}(2,1):=\{M\in\SL(3,\R) \ | \ M^tQ_2M=Q_2\} \ .$$ Such a representation preserves a space-like plane in $\R^3$ which, up to post-composing by an isometry, is the twisted hyperboloid $\widetilde{\mathcal H}:=\{\underline x\in\R^3 \ | \ \frac{1}{2}x_2^2-2x_2x_3=-1\}$. In order to find the associated parameterization of $\h^2$, up to post-composing by an isometry, we can choose where to send point $i\in\h^2$ and then impose $\widetilde\tau$-equivariance. Indeed, by taking the matrix $A:=\frac{1}{\sqrt y}\begin{psmallmatrix}
    y & x \\ 0 & 1
\end{psmallmatrix}$ which sends $i\in\h^2$ to $z=x+iy\in\h^2$, we first define $\widetilde g:\h^2\to\widetilde{\mathcal H}\subset\R^3$ by $\widetilde g(i):=(\frac{\sqrt 2}{2},0,\frac{\sqrt 2}{2})$ and then, by $\widetilde\tau$-equivariance $$\widetilde g(x,y)=\widetilde g\big(A\cdot i\big)=\widetilde\tau(A)\cdot \widetilde g(i)=\frac{\sqrt 2}{2}\bigg(\frac{x^2+y^2}{2},2\frac{x}{y},\frac{1}{y}\bigg) \ .$$ $\bullet$ Now let $\R_4[x,y]$ be the real vector space of degree four homogeneous polynomials in two variables, with a basis given by $\{x^4,x^3y,x^2y^2,xy^3,y^4\}$. We can introduce a non-degenerate bi-linear form of signature $(2,3)$ by $b_4(v,w):=-\frac{1}{6}v_3w_3-v_1w_5-v_5w_1+\frac{1}{4}v_2w_4+\frac{1}{4}v_4w_2$ and consider the Lie group $\widetilde{\SO}_0(2,3)$ to be the identity component of $$\widetilde{\SO}(2,3):=\{M\in\SL(5,\R) \ | \ M^tQ_4M=Q_4\} \ \ \text{where} \  \ Q_4=\begin{psmallmatrix}
    0 & 0 & 0 & 0 & -1 \\ 0 & 0 & 0 & 1/4 & 0 \\ 0 & 0 & -1/6 & 0 & 0 \\ 0 & 1/4 & 0 & 0 & 0 \\ -1 & 0 & 0 & 0 & 0
\end{psmallmatrix}$$is the matrix associated with $b_4$ in the above basis. After computing the irreducible representation $\widetilde j:\widetilde{\SO}_0(2,1)\to\widetilde{\SO}_0(2,3)$ we can identify $\widetilde{\mathcal{H}}$ (resp. $\Hypdue_{b_4}$) with the projectivization of $\R_2[x,y]_{<0}$ (resp. $\R_4[x,y]_{<0}$), where the subscript $<0$ stands for polynomials of negative discriminant. Thus, we can define the map $\widetilde f:\widetilde{\mathcal{H}}\to\Hypdue_{b_4}$ which sends the equivalence class $[P]$ to $[P^2]$ (see also \cite[\S 5.3]{collier2019geometry}) and turns out to be $\widetilde j$-equivariant. In particular, $\widetilde f(\widetilde{\mathcal{H}})\subset\Hypdue_{b_4}$ can be directly computed and represents a space-like maximal surface referred to as the \emph{Veronese surface} (see \cite{ishihara1988maximal}). Everything being explicit, one can finally compute the composition $\widetilde F:=\widetilde f\circ\widetilde g:\h^2\to\Hypdue_{b_4}$ which turns out to be $(\widetilde j\circ\widetilde\tau)$-equivariant.\vspace{0.5em}\newline $\bullet$ Now, we want to bring back the parameterization $\widetilde F$ of the equivariant maximal surface in the standard pseudo-hyperbolic space model. After straightforward but very long computations, the irreducible representation $\Phi:\PSL(2,\R)\to\SO_0(2,3)$ is such that $$\Phi:\frac{1}{\sqrt y}\begin{pmatrix}
    y & x \\ 0 & 1
\end{pmatrix}\longmapsto \begin{pmatrix}
    \frac{1+3x^2+y^2}{2y} & -\frac{x(1+x^2)}{y^2} & \sqrt 3x & -\frac{-1-3x^2+y^2}{2y} & \frac{x(1+x^2)}{y^2} \\ xy+\frac{x^3}{y} & \frac{1-x^4+y^4}{2y^2} & \sqrt 3x^2 & xy-\frac{x^3}{y} & \frac{-1+x^4+y^4}{2y^2} \\ \frac{\sqrt 3x}{y} & -\frac{\sqrt 3x^2}{y^2} & 1 & -\frac{\sqrt 3x}{y} & \frac{\sqrt 3x^2}{y^2} \\ \frac{-1+3x^2+y^2}{2y} & \frac{x(1-x^2)}{y^2} & \sqrt 3x & \frac{1-3x^2+y^2}{2y} & \frac{x(-1+x^2)}{y^2} \\ xy+\frac{x^3}{y} & -\frac{1+x^4-y^4}{2y^2} & \sqrt 3x^2 & xy-\frac{x^3}{y} & \frac{1+x^4+y^4}{2y^2}
\end{pmatrix} \ .$$ Again, since in the twisted model we chose where to send the point $i\in\h^2$ and we imposed equivariance, the map $F:\h^2\to\Hypdue$ in the standard model is given by \begin{align*}F(x,y)=\Phi(A)\cdot F(i)=\frac{\sqrt 3}{2y^2}\bigg(&x(1+x^2+y^2), \frac{-1+(x^2+y^2)^2}{2}, \frac{\sqrt 3}{3}(y^2+3x^2), \\ & x(-1+x^2+y^2), \frac{1+(x^2+y^2)^2}{2}\bigg) \ .\end{align*} In other words, if $\rho\in\mathcal F(\Sg)$ is in the Fuchsian locus, after identifying $\widetilde\Sg$ with $\h^2$, we found the explicit parameterization of the unique $\rho$-equivariant maximal space-like embedding $F:\h^2\to\Hypdue$. \vspace{0.5em}\newline\underline{\emph{Step 2: the orthonormal frame in $\R^5$ and the matrix of the scalar product $\iota$}}\vspace{0.5em}\newline
Recall that the metric $\g$ was constructed from a metric $h$ on the surface $\Sg$ (which in this case is the induced metric as a maximal surface in $\Hypdue$) and a scalar product $\iota$ on $\lso_0(2,3)_{\Ad\rho}$. Such an inner product $\iota$ is determined by a family of scalar products $\iota_{\widetilde x}$ in $\R^5$, depending on $x\in\widetilde\Sg$, which are obtained by declaring the frame $\{u_1(\widetilde x),u_2(\widetilde x),F(\widetilde x),N_1(\widetilde x),N_2(\widetilde x)\}$ to be orthonormal, where $u_1$ and $u_2$ are the tangent vectors to the surface and $N_1,N_2$ are the normals. After identifying $\widetilde\Sg$ with $\h^2$, the $\rho$-equivariant map $F$ allows us to compute the vectors $u_1,u_2$ simply by deriving the position vector $F(x,y)$ in $x$ and $y$, respectively, and then normalize them to norm $1$. Having done this, we first compute the normal vectors at the point $i\in\h^2$, and then use the $\rho$-equivariance to obtain $N_1,N_2$ at any point $(x,y)$. In the end, the aforementioned vectors are given by $$ u_1=\begin{psmallmatrix}
    \frac{1+3x^2+y^2}{2y} \\ \frac{x}{y}(y^2+x^2) \\ \frac{\sqrt 3 x}{y} \\ \frac{-1+3x^2+y^2}{2y} \\ \frac{x}{y}(y^2+x^2)
\end{psmallmatrix}, \ u_2=\begin{psmallmatrix}
    -\frac{x(1+x^2)}{y^2} \\ \frac{1-x^4+y^4}{2y^2} \\ -\frac{\sqrt 3x^2}{y^2} \\ \frac{x(1-x^2)}{y^2} \\ -\frac{1-x^4-y^4}{2y^2}
\end{psmallmatrix}, \ N_1=\begin{psmallmatrix}
    \frac{-1-3x^2+y^2}{2y} \\ \frac{x}{y}(y^2-x^2) \\ -\frac{\sqrt 3x}{y} \\ \frac{1-3x^2+y^2}{2y} \\ \frac{x}{y}(y^2-x^2)
\end{psmallmatrix}, \ N_2=\begin{psmallmatrix}
    \frac{x}{2y^2}(x+x^2-3y^2) \\ \frac{-1+x^4-6x^2y^2+y^4}{4y^2} \\ \frac{\sqrt 3}{2}\Big(\frac{x^2}{y^2}-1\Big) \\ \frac{x}{2y^2}(-1+x^2-3y^2) \\ \frac{1+x^4-6x^2y^2+y^4}{4y^2}
\end{psmallmatrix} \ .$$ In particular, we can explictly compute the matrix $H$ representing $\iota_z$ in the canonical basis of $\R^5$, for $z\in\h^2$, and its inverse.\footnote{We decided not to insert the full matrices since their expression is too complicated.}\vspace{0.5em}\newline\underline{\emph{Step 3: the tangent space to the Fuchsian locus in $\Hit(\Sg)$}}\vspace{0.5em}\newline The next step is to prove something similar to Proposition \ref{prop:tangenttoFuchsianlocus}. First, if $\Phi$ is the irreducible representation of $\PSL(2,\R)$ into $\SO_0(2,3)$, the map induced at the level of Lie algebras is \begin{align*}
    & \ \ \ \Phi_\ast:\Lsl(2,\R)\longrightarrow\lso_0(2,3) \\ &\begin{pmatrix}
        a & b \\ c & -a
    \end{pmatrix}\longmapsto\begin{pmatrix}
        0 & c-b & \sqrt 3(b+c) & 2a & b+c \\ b-c & 0 & 0 & b+c & 4a \\ \sqrt 3(b+c) & 0 & 0 &\sqrt 3(c-b) & 0 \\ 2a & b+c & \sqrt 3(b-c) & 0 & c-b \\ b+c & 4a & 0 & b-c & 0
    \end{pmatrix} \ .
\end{align*} The first claim of the aforementioned proposition is developed in the same way and thus needs no further explanation. As for the second point, however, the situation is quite different. In fact, for every holomorphic quadratic differential $\psi(z)\mathrm dz^2$ on $\h^2/\Phi^{-1}(\Gamma)$, it must be shown that the $\lso_0(2,3)_{\Ad\rho}$-valued $1$-form $\psi(z)\mathrm dz\otimes\Phi_\ast\begin{psmallmatrix}
    -z & z^2 \\ -1 & z 
\end{psmallmatrix}$ is a harmonic representative in its cohomology class, that is, it is both $\mathrm d$-closed and $\delta$-closed. It all boils down to computing $$z^2\psi(z)\mathrm dz\otimes\#E_1-\psi(z)\mathrm dz\otimes\#E_3-2z\psi(z)\mathrm dz\otimes\#E_2 \ ,$$ where \begin{align*}&E_1:=\Phi_\ast\begin{psmallmatrix}
    0 & 1 \\ 0 & 0 
\end{psmallmatrix}=\begin{psmallmatrix}
    0 & -1 & \sqrt 3 & 0 & 1 \\ 1 & 0 & 0 & 1 & 0 \\ \sqrt 3 & 0 & 0 & -\sqrt 3 & 0 \\ 0 & 1 & \sqrt 3 & 0 & -1 \\ 1 & 0 & 0 & 1 & 0 
\end{psmallmatrix},\quad E_2:=\Phi_\ast\begin{psmallmatrix}
    \frac{1}{2} & 0 \\ 0 & -\frac{1}{2}
\end{psmallmatrix}=\begin{psmallmatrix}
    0 & 0 & 0 & 1 & 0 \\ 0 & 0 & 0 & 0 & 2 \\ 0 & 0 & 0 & 0 & 0 \\ 1 & 0 & 0 & 0 & 0 \\ 0 & 2 & 0 & 0 & 0 
\end{psmallmatrix},\\ & E_3=\Phi_\ast\begin{psmallmatrix}
    0 & 0 \\ 1 & 0
\end{psmallmatrix}=\begin{psmallmatrix}
    0 & 1 & \sqrt 3 & 0 & 1 \\ -1 & 0 & 0 & 1 & 0 \\ \sqrt 3 & 0 & 0 & \sqrt 3 & 0 \\ 0 & 1 & -\sqrt 3 & 0 & -1 \\ 1 & 0 & 0 & -1 & 0 
\end{psmallmatrix} \ ,\end{align*} and then concluding $\mathrm d\ast\big(z^2\psi(z)\mathrm dz\otimes\#E_1-\psi(z)\mathrm dz\otimes\#E_3-2z\psi(z)\mathrm dz\otimes\#E_2\big)=0$. Since the induced map $\Phi_\ast$ is different, we have to complete $\{E_1,E_2,E_3\}$ to a basis for $\lso_0(2,3)$ like the one in the proof of Theorem \ref{thm:metriccompatiblewithorbifold} except by making a substitution of these two matrices \begin{align*}
    E_4=\begin{psmallmatrix}
        0 & 1 & 0 & 0 & 0 \\ -1 & 0 & 0 & 0 & 0 \\ 0 & 0 & 0 & 0 & 0 \\ 0 & 0 & 0 & 0 & 0 \\ 0 & 0 & 0 & 0 & 0 
    \end{psmallmatrix},\quad E_7=\begin{psmallmatrix}
        0 & 0 & 0 & 0 & 0 \\ 0 & 0 & 1 & 0 & 0 \\ 0 & 1 & 0 & 0 & 0 \\ 0 & 0 & 0 & 0 & 0 \\ 0 & 0 & 0 & 0 & 0 
    \end{psmallmatrix} \ .
\end{align*} So, it remains only to compute the three terms $\#E_1,\#E_2$ and $\#E_3$ using that $$\#M=\sum_{i=1}^{10}\iota(M,E_i)E^*_i,\quad \iota (M,E_i)=\tr(M^tH^{-1}E_iH)$$ and knowing the explicit expression of the matrices $H,H^{-1}$ computed in step 2. We get that \begin{align*}
    \mathrm d\ast&\Big(z^2\psi(z)\mathrm dz\otimes\#E_1-\psi(z)\mathrm dz\otimes\#E_3-2z\psi(z)\mathrm dz\otimes\#E_2\Big)=\mathrm d\ast\bigg(\psi(z)\Big(20z^2\mathrm dz\otimes E^*_1 \\ &-20z\mathrm dz\otimes E_2^*-20\mathrm dz\otimes E_3^*-2(1+z^2)\mathrm dz\otimes E_4^* -2(1+z)\mathrm dz\otimes E_5^*+2(z^2-1)\mathrm dz\otimes E_8^* \\ & -4z\mathrm dz\otimes E_9^*+2(z^2-1)\mathrm dz\otimes E_{10}^*\Big)\bigg)=0 \ .
\end{align*}
\underline{\emph{Step 4: the Fuchsian locus is totally geodesic}}\vspace{0.5em}\newline The idea is still the same, which is to find an isometry of $\Hit(\Sg)$ that has exactly the Fuchsian locus as fixed points. Unlike the anti-de Sitter case or the Fuchsian locus in $\charvarietymaximalstiefel$, it is not clear how to find such a map by looking directly at representations since $\SO_0(2,1)$ is not trivially embedded in $\SO_0(2,3)$. In this case, however, thanks to the outstanding theorem proved by Labourie (\cite{labourie2017cyclic}), we know that $\Hit(\Sg)$ is mapping class group equivariantly isomorphic to the holomorphic bundle of quartic differential over Teichm\"uller space. Thanks to this identification, each point of $\Hit(\Sg)$ can be thought of as a pair $(J,q_4)$ where $J$ is a complex structure on $\Sg$ and $q_4$ is a $J$-holomorphic quartic differential, that is, a holomorphic section of $K_X^{\otimes^4}$, where $X=(\Sg,J)$. In this way, the fixed points locus of \begin{align*}
    &\Hit(\Sg)\to\Hit(\Sg) \\ & (J,q_4)\mapsto (J,-q_4)
\end{align*} are exactly the pairs $(J,q_4)$ with $q_4=0$, i.e. the Fuchsian locus. To this end we need to understand how the induced map acts at the level of representations in order to use the same approach as in Proposition \ref{prop:harmonicformsantidesitter}. It can be shown\footnote{Studying the relation between $q_4$ and the orthonormal frame in $\R^5$ defined by the maximal surface, it can be seen that changing the sign to $q_4$ is equivalent to changing the sign to one of the two normal vectors.} that switching the sign to the quartic differential is equivalent to conjugating the associated representation for $Q=\diag(1,1,1,1,-1)\in\mathrm O(2,3)$. Now, one may argue as in Section \ref{sec:4.3} and show that $\big(\mathcal F(\Sg),\g|_{\mathcal F(\Sg)}\big)$ is totally geodesic in $\big(\Hit(\Sg),\g\big)$.\vspace{0.5em}\newline\underline{\emph{Step 5: the restricted metric on $\mathcal F(\Sg)$ is not a multiple of the Weil-Petersson one}}\vspace{0.5em}\newline
It remains to explain the analogous version of Lemma \ref{lem:matrixscalarproductcomplexversion} which then leads to the computation of the restricted metric on the Fuchsian locus as in Theorem \ref{thm:metriconFuchsianlocus}. It all depends on the following result $$\iota_z\bigg(\Phi_\ast\begin{pmatrix}
        -z & z^2 \\ -1 & z 
    \end{pmatrix},\Phi_\ast\begin{pmatrix}
        -z & z^2 \\ -1 & z 
    \end{pmatrix}\bigg)=\frac{20\big(1+x^4+y^4+2x^2(1+y^2)\big)^2}{y^2} \ ,$$which by the same strategy as in the aforementioned theorem, leads us to conclude that \begin{align*}
        &g\bigg(\psi(z)\mathrm dz\otimes\Phi_\ast\begin{psmallmatrix}
        -z & z^2 \\ -1 & z 
    \end{psmallmatrix},\psi'(z)\mathrm dz\otimes\Phi_\ast\begin{psmallmatrix}
        -z & z^2 \\ -1 & z 
    \end{psmallmatrix}\bigg) \\ &=\Ree\int_\Sg\iota_z\bigg(\Phi_\ast\begin{psmallmatrix}
        -z & z^2 \\ -1 & z 
    \end{psmallmatrix},\Phi_\ast\begin{psmallmatrix}
        -z & z^2 \\ -1 & z 
    \end{psmallmatrix}\bigg)\psi(z)\mathrm dz\wedge\ast\big(\psi'(z)\mathrm dz\big) \\ &=\Ree\int_\Sg 20i\psi(z)\overline{\psi'(z)}\frac{\big(1+x^4+y^4+2x^2(1+y^2)\big)^2}{y^2}\mathrm dz\wedge\mathrm d\bar z \\ &=80\g_{\text{WP}}(\psi,\psi')+ \ \text{other therms} \ .
    \end{align*}As it can be seen, there is indeed a part in $\g|_{\mathcal F(\Sg)}$ that coincides with a multiple of Weil-Petersson metric.
\section{Inclusions for $n\ge 3$}
In this final part, making use of the theory of maximal polystable $\SO_0(2,n+1)$-Higgs bundles, we want to understand whether representations $\rho\in\charvarietynmaximal$ whose Zariski closure is contained in $\SO_0(2,3)$ represent smooth or orbifold points in the maximal $\SO_0(2,n+1)$-character variety. In particular, in Section \ref{sec:5.1} we focus on representations whose first Stiefel-Whitney class is non-zero. Then, in Section \ref{sec:4.4}, we analyze the properties of the metric on the equivalent of the so-called Gothen components for $\mathbb P\mathrm{Sp}(4,\R)$ (\cite{gothen2001components}). We explain in which connected components of the maximal $\SO_0(2,n+1)$-character variety they can be deformed to each other, and we perform a similar study as the aforementioned case. The same strategy applies to the Hitchin component.
\subsection{The case $sw_1\neq 0$}\label{sec:5.1}
Let us consider the isometric embedding $\R^{2,3}\to\R^{2,n+1}$ which sends the point $(x_1,x_2,x_3,x_4,x_5)$ to $(x_1,x_2,x_3,x_4,x_5,0,\dots,0)$. At the level of Lie groups we get a tightly embedded copy of $\SO_0(2,3)$ inside $\SO_0(2,n+1)$ in the same way as explained in Section \ref{sec:4.2} for a similar case. In particular, given a maximal representation $\rho\in\charvarietynmaximal^{sw_1\neq 0}_{sw_2}$ such that $\overline{\rho\big(\pi_1(\Sg)\big)}<\SO_0(2,3)\times\SO(1)\times\dots\times\SO(1)$, it defines a point in $\charvarietymaximalstiefel$ (see decomposition (\ref{eq:decompositionSO23})). In other words, we have an inclusion $\varsigma:\charvarietymaximalstiefel\hookrightarrow\charvarietynmaximal^{sw_1\neq 0}_{sw_2}$ at the level of connected components.  \begin{proposition}\label{prop:orbifoldpointsforngeneral}
Any $\rho\in\charvarietymaximalstiefel$ represents a smooth point in $\charvarietynmaximal^{sw_1\neq 0}_{sw_2}$ if $n=3$ and an orbifold point of type $\mathcal C_\rho/Z_{n+3}^\mathbb C$ when $n\ge 4$, where $\mathcal{C_\rho}$ is the centralizer of $\rho\big(\pi_1(\Sg)\big)$ inside $\SO_0(2,n+1)$ and $Z_{n+3}^\mathbb C$ is the center of $\SO(n+3,\mathbb C)$.
\end{proposition}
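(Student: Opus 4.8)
The plan is to transport the question to the moduli space of Higgs bundles through the non-abelian Hodge correspondence (Theorem \ref{thm:nonabelianHodge}) and to read the local structure of $\charvarietynmaximal^{sw_1\neq 0}_{sw_2}$ at $[\rho]$ off the automorphism group of the polystable $\SO_0(2,n+1)$-Higgs bundle attached to $\varsigma(\rho)$. Following the $n=2$ analysis of \cite{alessandrini2019geometry} recalled in Section \ref{sec:4.2}, near a polystable Higgs bundle $(\mathcal E,\Phi)$ the moduli space $\Higgsn$ is locally modelled on the quotient of the first hypercohomology of its deformation complex — which at smooth points is the Zariski tangent space $H^1(\Sg,\lso_0(2,n+1)_{\Ad\rho})$ of Theorem \ref{thm:tangenttocharactervariety} — by $\mathrm{Aut}(\mathcal E,\Phi)$; since the generic stabilizer is the center $Z_{n+3}^{\mathbb{C}}$ of $\SO(n+3,\mathbb{C})$ and it acts trivially, $[\rho]$ is a smooth point exactly when $\mathrm{Aut}(\mathcal E,\Phi)=Z_{n+3}^{\mathbb{C}}$, and otherwise an orbifold point with local group $\mathrm{Aut}(\mathcal E,\Phi)/Z_{n+3}^{\mathbb{C}}\cong\mathcal{C}_\rho/Z_{n+3}^{\mathbb{C}}$, where $\mathcal{C}_\rho$ is the centralizer of $\rho(\pi_1(\Sg))$ in $\SO_0(2,n+1)$. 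So the whole statement reduces to computing $\mathcal{C}_\rho$, equivalently $\mathrm{Aut}(\mathcal E,\Phi)$.

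First I would describe $\varsigma$ explicitly on Higgs bundles. By the dictionary recalled at the start of Section \ref{sec:4.2} — the Zariski closure of a completely reducible representation lies in a reductive subgroup iff the associated polystable Higgs bundle reduces to that subgroup — the $\SO_0(2,n+1)$-Higgs bundle of $\varsigma(\rho)$ has the shape $(\mathcal E,\Phi)=(\mathcal E',\Phi')\perp(\mathcal{O}_X^{n-2},0)$, where $(\mathcal E',\Phi')$ is the polystable $\SO_0(2,3)$-Higgs bundle attached to $\rho$ by Proposition \ref{prop:polystableHiggs} and the second summand is the trivial rank $n-2$ orthogonal bundle with vanishing Higgs field, mirroring the orthogonal splitting $\R^{2,n+1}=\R^{2,3}\oplus\R^{0,n-2}$ of the isometric embedding. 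Since $\Phi$ has no component involving the trivial summand, $(\mathcal{O}_X^{n-2},0)$ is the maximal $\Phi$-trivial orthogonal subbundle, and polystability of $(\mathcal E',\Phi')$ forces every $\SO(n+3,\mathbb{C})$-gauge automorphism of $(\mathcal E,\Phi)$ to preserve this splitting; hence
\[
\mathrm{Aut}(\mathcal E,\Phi)=\{(A,P)\in \mathrm{Aut}_{\mathrm{O}(5,\mathbb{C})}(\mathcal E',\Phi')\times\mathrm{O}(n-2,\mathbb{C})\ :\ \det A\cdot\det P=1\},
\]
where $\mathrm{Aut}_{\mathrm{O}(5,\mathbb{C})}(\mathcal E',\Phi')$ is the centralizer of $\rho(\pi_1(\Sg))$ in $\mathrm{O}(5,\mathbb{C})$ and $\mathrm{O}(n-2,\mathbb{C})$ sits diagonally via $P\mapsto((\det P)\,\mathrm{Id}_5,P)$. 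For $\rho$ Zariski dense in $\SO_0(2,3)$ the standard representation $\R^5$ is irreducible, so $\mathrm{Aut}_{\mathrm{O}(5,\mathbb{C})}(\mathcal E',\Phi')=\{\pm\mathrm{Id}_5\}$ by Schur's lemma; for the orbifold points of $\charvarietymaximalstiefel$ one feeds the explicit finite centralizers of Lemma \ref{lem:centralizerorbifoldpoint} into the same computation.

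The dichotomy now comes from the rank of the trivial summand. When $n=3$ the extra factor is $\mathrm{O}(1,\mathbb{C})=\{\pm 1\}$, and the automorphism attached to $-1$ is $-\mathrm{Id}_6$, which is precisely the nontrivial element of $Z_6^{\mathbb{C}}=Z(\SO(6,\mathbb{C}))$ and belongs to $\SO_0(2,4)$; thus $\mathrm{Aut}(\mathcal E,\Phi)=Z_6^{\mathbb{C}}$, so $\mathcal{C}_\rho$ has been reduced to the generic stabilizer and $\varsigma(\rho)$ is a \emph{smooth} point of $\charvarietynmaximal^{sw_1\neq 0}_{sw_2}$. When $n\ge 4$ the factor $\mathrm{O}(n-2,\mathbb{C})$ is not contained in the scalars, so $\mathcal{C}_\rho/Z_{n+3}^{\mathbb{C}}$ is nontrivial and $\varsigma(\rho)$ is an orbifold point of exactly the stated type.

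The main obstacle is twofold. First one must secure the local model invoked above for $\charvarietynmaximal^{sw_1\neq 0}_{sw_2}$ at a possibly singular $[\rho]$: the Kuranishi/Luna-slice description of $\Higgsn$ near a polystable Higgs bundle, the identification of the deformation-complex hypercohomology with the Zariski tangent space compatibly with Theorem \ref{thm:tangenttocharactervariety}, and the triviality of the $Z_{n+3}^{\mathbb{C}}$-action — all of which I would import from \cite{alessandrini2019geometry} and the general deformation theory of Higgs bundles. Second, and more delicate, is the actual computation of $\mathrm{Aut}(\mathcal E,\Phi)$: one has to genuinely exclude automorphisms mixing $\mathcal E'$ with the trivial summand (this is where polystability of $(\mathcal E',\Phi')$, and the fact that $(\mathcal{O}_X,0)$ is not one of its polystable factors when $\rho$ is Zariski dense, are used), and one has to track the $\mathrm{O}$-versus-$\SO$ determinant constraint and the position of the relevant sign-change matrices relative to the identity component $\SO_0(2,n+1)$ with care — it is exactly this bookkeeping that makes the extra $\mathrm{O}(1,\mathbb{C})$ collapse into the center for $n=3$ while surviving for $n\ge 4$.
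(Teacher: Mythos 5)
Your overall route is the paper's: pass to Higgs bundles via the nonabelian Hodge correspondence, write the bundle of $\varsigma(\rho)$ as the maximal $\SO_0(2,3)$-Higgs bundle of $\rho$ orthogonally summed with the trivial pair $(\mathcal O_X^{\oplus (n-2)},0)$, and decide smooth versus orbifold by comparing the resulting automorphism group/centralizer with the center $Z_{n+3}^{\mathbb C}$; your $n=3$ computation ($\mathcal C_\rho=\{\pm\mathrm{Id}_6\}=Z_6^{\mathbb C}$, which lies in $\SO_0(2,4)$) is the same as the paper's.

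The one substantive discrepancy is in how the local group is shown to be a \emph{finite} orbifold group when $n\ge 4$. The paper gets this by asserting that the induced $\SO(n+3,\C)$-Higgs bundle is still stable (quoting Propositions 4.16 and 2.12 of \cite{alessandrini2019geometry}), deducing that $\mathcal C_\rho$ is finite, and then recording only the diagonal sign matrices, which yields $\mathcal C_\rho\cong(\Z_2)^{\times (n-2)}$ up to the center. Your explicit description of $\mathrm{Aut}(\mathcal E,\Phi)$, by contrast, contains the whole of $\mathrm{O}(n-2,\C)$ acting on the trivial summand; for $n\ge 4$ this group is positive-dimensional, so as written your computation does not produce a finite local group, and the conclusion ``orbifold point of exactly the stated type'' does not yet follow from it. You need the step that cuts $\mathrm{O}(n-2,\C)$ down to finitely many elements --- in the paper this is precisely the role of the stability/finiteness citation --- or an explanation of why only a finite subgroup (e.g.\ the diagonal sign matrices singled out in the paper's explicit lists for $n=4,5$) acts effectively on the local model. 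Until that is reconciled, the $n\ge 4$ half of your argument has a gap that the $n=3$ half (where the extra factor $\mathrm{O}(1,\C)$ is already finite and collapses into $Z_6^{\mathbb C}$) does not.
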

\begin{proof}
First of all notice that if $\rho:\pi_1(\Sg)\to\SO_0(2,n+1)$ is maximal and its Zariski closure is contained in $\SO_0(2,3)\times\SO(1)\times\dots\times\SO(1)$, then there is a reduction of the structure group of the associated Higgs bundle from $\SO_0(2,n+1)$ to $\SO_0(2,3)$. In particular, if $(\mathcal W,b_{\mathcal W},q_2,\beta_0)$ is the maximal $\SO_0(2,n+1)$-Higgs associated with $\rho$, then $\mathcal W=\mathcal W'\oplus\mathcal O_X^{\oplus^{n-2}}$ where the quadr-uple $(\mathcal W',b_{\mathcal W'},q_2,\beta_0)$ defines a maximal polystable $\SO_0(2,3)$-Higgs bundle whose first Stiefel-Whitney class is non zero. For this reason, since the associated $\SO(5,\C)$ Higgs bundle is stable (\cite[Proposition 4.16]{alessandrini2019geometry}), the $\SO(n+3,\mathbb C)$-bundle associated with $(\mathcal W,b_{\mathcal W},q_2,\beta_0)$ is stable as well (indeed the two differ by the sum of a trivial holomorphic bundle). In particular, the centralizer $\mathcal C_\rho<\SO_0(2,n+1)$ has to be finite and the type of orbifold singularity is detected by $\mathcal C_\rho/Z_{n+3}^\mathbb C$ (\cite[Proposition 2.12]{alessandrini2019geometry}). In the end, it is easy to see that the only value of $n$ for which $\rho$ can be a smooth point is $n=3$, as in this case $Z_6^\mathbb C=\{\pm\mathrm{Id}_6\}$ and those are the only possible matrices in $\SO_0(2,4)$ fixing $\rho\big(\pi_1(\Sg)\big)<\SO_0(2,3)\times\SO(1)$, i.e. $\mathcal C_\rho=Z_6^\mathbb C$. As soon as $n>3$, the centralizer $\mathcal C_\rho$ is always strictly bigger than the center of $\SO(n+3,\mathbb C)$.
\end{proof}
    \begin{lemma}\label{lem:gcompatibleorbifoldforanyn}
        The Riemannian metric $\g$ on $\charvarietynmaximal^{sw_1\neq 0}_{sw_2}$ is compatible with all orbifold singularities arising from representations contained in $\charvarietymaximalstiefel$. 
    \end{lemma}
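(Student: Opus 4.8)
The plan is to transpose, word for word, the argument of Section~\ref{sec:4.2} from $\SO_0(2,3)$ to $\SO_0(2,n+1)$, replacing the explicit generators of the centraliser used there by the abstract defining property $L\rho L^{-1}=\rho$. Fix $\rho\in\charvarietymaximalstiefel$, viewed inside $\charvarietynmaximal^{sw_1\neq 0}_{sw_2}$ through the inclusion $\varsigma$. If $n=3$ then by Proposition~\ref{prop:orbifoldpointsforngeneral} the point $\rho$ is smooth and there is nothing to prove, so I would assume $n\ge 4$; in that range $\rho$ is an orbifold point whose local model is the quotient of $H^1\big(\Sg,\lso_0(2,n+1)_{\Ad\rho}\big)$ by the finite group $\mathcal C_\rho/Z^{\mathbb C}_{n+3}$, acting by conjugation on the $\lso_0(2,n+1)$-part of cohomology classes exactly as in Lemma~\ref{lem:qinducedoncohomology}. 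Observe first that $\g$ still makes sense on $H^1\big(\Sg,\lso_0(2,n+1)_{\Ad\rho}\big)$ although $\rho$ is singular: its construction in Theorem~\ref{thm:metricsmoothpoints} only uses the induced metric $h$ on $\Sg$ and the inner product $\iota$, both obtained from the unique $\rho$-equivariant maximal space-like embedding $\varphi\colon\widetilde\Sg\to\Hyp$ granted by Theorem~\ref{thm:rhoequivariantsurface}. Since $Z^{\mathbb C}_{n+3}\cap\SO_0(2,n+1)$ acts trivially by conjugation, it then suffices to show that every $L\in\mathcal C_\rho$ acts on $H^1\big(\Sg,\lso_0(2,n+1)_{\Ad\rho}\big)$ by an isometry of $\g$, after which $\g$ descends to the orbifold chart at $\rho$.

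The \emph{crucial point} — and the one genuinely different from the $n=2$ case — is that every $L\in\mathcal C_\rho$ fixes $\varphi$. Indeed $L\rho L^{-1}=\rho$ forces $\widetilde x\mapsto L\cdot\varphi(\widetilde x)$ to be again a $\rho$-equivariant maximal space-like embedding of $\widetilde\Sg$ into $\Hyp$, so $L\cdot\varphi=\varphi$ by the uniqueness in Theorem~\ref{thm:rhoequivariantsurface}. Consequently $L$ fixes the maximal surface pointwise in $\Hyp$; being linear, it preserves at each point $\varphi(\widetilde x)$ the tangent plane to the surface, the position line, and the normal space, together with the restriction of $\bi$ to each of these mutually $\bi$-orthogonal summands of $\R^{n+3}$. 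Hence $L$ is orthogonal for the positive-definite inner product $\iota_{\widetilde x}$ that declares the frame $\{u_1,u_2,\varphi,N_1,\dots,N_n\}$ orthonormal, so conjugation by $L$ is an isometry of $\iota_{\widetilde x}\otimes\iota^*_{\widetilde x}$ and hence of its restriction $\iota$ to $\lso_0(2,n+1)_{\Ad\rho}$, i.e. $\iota\big(L\phi L^{-1},L\phi'L^{-1}\big)=\iota(\phi,\phi')$. Since $h$ is likewise unchanged, the bilinear pairing $g$ of (\ref{eq:definitiong}) satisfies $g\big(\sigma\otimes L\phi L^{-1},\sigma'\otimes L\phi'L^{-1}\big)=g\big(\sigma\otimes\phi,\sigma'\otimes\phi'\big)$, just as in Lemma~\ref{lem:invariantmetricg}.

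It remains to verify that conjugation by $L$ sends $\Delta$-harmonic $1$-forms to $\Delta$-harmonic $1$-forms, and here the proof of Theorem~\ref{thm:metriccompatiblewithorbifold} carries over unchanged: $\mathrm d$-closedness is preserved by linearity, and for $\delta$-closedness one uses that, since $L$ fixes $\varphi$, the operators $\ast$ and $\#$ are literally the same after the centraliser action, so that $\#\big(L\phi L^{-1}\big)=\sum_j\iota\big(\phi,L^{-1}E_jL\big)E^*_j$ and $\mathrm d\ast\#\big(\sum_i\sigma_i\otimes L\phi_iL^{-1}\big)$ is obtained from $\mathrm d\ast\#\big(\sum_i\sigma_i\otimes\phi_i\big)=0$ by the invertible change of basis $E_j\mapsto L^{-1}E_jL$ of $\lso_0(2,n+1)$; equivalently and more conceptually, $\#$ intertwines the conjugation action on $\lso_0(2,n+1)_{\Ad\rho}$ with the induced action on the dual bundle $\lso_0(2,n+1)_{\Ad\rho^*}$ and commutes with the constant-coefficient operators $\mathrm d$ and $\ast$, hence preserves $\ker\Delta\cong H^1\big(\Sg,\lso_0(2,n+1)_{\Ad\rho}\big)$. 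Together with the invariance of $g$, this shows that each $L\in\mathcal C_\rho$ acts on $H^1\big(\Sg,\lso_0(2,n+1)_{\Ad\rho}\big)$ by a $\g$-isometry, whence $\g$ descends to a well-defined Riemannian metric on the orbifold chart $H^1\big(\Sg,\lso_0(2,n+1)_{\Ad\rho}\big)/\big(\mathcal C_\rho/Z^{\mathbb C}_{n+3}\big)$, which is the claim. I expect the only real obstacle to be establishing the invariance of $\iota$ under the centraliser without explicit matrices; once the surface-uniqueness argument above is in place, everything downstream is a formal replay of Section~\ref{sec:4.2}.
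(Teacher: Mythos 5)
Your proposal is correct and follows essentially the same route as the paper: invariance of the pairing $g$ deduced from the uniqueness of the $\rho$-equivariant maximal surface, followed by preservation of harmonicity through the operator $\#$, after reducing to $n\ge 4$ via Proposition \ref{prop:orbifoldpointsforngeneral}. The only real difference is that where the paper explicitly identifies each $L$ as a diagonal matrix with entries $\pm 1$ and checks $L^{-1}E_jL=\pm E_j$ on the chosen basis, you observe that $\Ad_L$-invariance of $\iota$ together with linearity (each $L^{-1}E_jL$ being a constant-coefficient combination of the $E_k$, so that $\#$ intertwines the conjugation action and commutes with $\mathrm d$ and $\ast$) already yields $\delta$-closedness --- a mild but legitimate streamlining of the same argument.
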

   \begin{proof}
   Let $\rho\in\charvarietymaximalstiefel\hookrightarrow\charvarietynmaximal^{sw_1\neq 0}_{sw_2}$ with $n\ge 4$. Let $L$ be any matrix in the quotient group $\mathcal{C}_\rho/Z_{n+3}^\mathbb C$, which is not trivial according to Proposition \ref{prop:orbifoldpointsforngeneral}. If $g$ denotes the Riemannian metric at the level of $\lso_0(2,n+1)_{\Ad\rho}$-valued $1$-forms, then with the same approach as in Lemma \ref{lem:invariantmetricg} we get $$g\big(\sigma\otimes L\phi L^{-1},\sigma'\otimes L\phi'L^{-1}\big)=g\big(\sigma\otimes\phi,\sigma'\otimes\phi'\big) \ ,$$ for any $\sigma\otimes\phi\in\Omega^1\big(\Sg,\lso_0(2,n+1)_{\Ad\rho}\big)$. The next step is to prove that the action of any $L$ by conjugation preserves the harmonic $\lso_0(2,n+1)_{\Ad\rho}$-valued $1$-forms. Recall that $\alpha\in\Omega^1\big(\Sg,\lso_0(2,n+1)_{\Ad\rho}\big)$ is harmonic if and only if $\mathrm d\alpha=\delta\alpha=0$ (see Section \ref{sec:3.1}). Let $\sum_i\sigma_i\otimes\phi_i$ be a harmonic representative in its cohomology class, then we need to show that $\mathrm d\big(\sum_i\sigma_i\otimes L\phi_iL^{-1}\big)=0$ and $\delta\big(\sum_i\sigma_i\otimes L\phi_iL^{-1}\big)=0$, for any $L\in\mathcal C_\rho/Z_{n+3}^\mathbb C$. The claim about the differential $\mathrm d$ follows by linearity. It only remains to prove that $$\mathrm d\ast\big(\sum_i\sigma_i\otimes\#(L\phi_iL^{-1})\big)=0 ,$$ as also occurred in Theorem \ref{thm:metriccompatiblewithorbifold}. At this point, since any matrix $M$ in $\lso_0(2,n+1)$ can be written as $\begin{psmallmatrix}
       A & B \\ B^t & D \end{psmallmatrix}$, where $A$ and $D$ are respectively a $2\times 2$ and $(n+1)\times(n+1)$ anti-symmetric matrix and $B$ is a $2\times(n+1)$ arbitrary matrix, we can pick a basis $\{E_1,E_2,E_3,\dots,E_{r}\}$ of $\lso_0(2,n+1)$, where $r=\mathrm{dim} \ \lso_0(2,n+1)$, as \begin{align*}
     E_1=\begin{psmallmatrix}
        0 & -1 & 1 & 0 & 0 \\ 1 & 0 & 0 & 0 & 0 \\ 1 & 0 & 0 & 0 & 0 \\ 0 & 0 & 0 & 0 & 0 \\ 0 & 0 & 0 & 0 & 0 
    \end{psmallmatrix},\quad E_2=\begin{psmallmatrix}
        0 & 0 & 0 & 0 & 0 \\ 0 & 0 & 1 & 0 & 0 \\ 0 & 1 & 0 & 0 & 0 \\ 0 & 0 & 0 & 0 & 0 \\ 0 & 0 & 0 & 0 & 0
    \end{psmallmatrix},\quad E_3=\begin{psmallmatrix}
        0 & 1 & 1 & 0 & 0 \\ -1 & 0 & 0 & 0 & 0 \\ 1 & 0 & 0 & 0 & 0 \\ 0 & 0 & 0 & 0 & 0 \\ 0 & 0 & 0 & 0 & 
    \end{psmallmatrix} \ ,\end{align*}and the remaining $r-3$ matrices are chosen following the same order as the proof of Theorem \ref{thm:metriccompatiblewithorbifold} generalized to arbitrary dimension. Also, another observation is that every matrix $L\in\mathcal C_\rho/Z_{n+3}^\mathbb C$ can be written as a diagonal matrix with only $\pm 1$. This follows from the fact that every such $L$ has to belong in $\SO_0(2,n+1)$ and has to satisfy the relation $$L\begin{pmatrix}
        C & 0 \\ 0 & \mathrm{Id_{n-2}}
    \end{pmatrix}L^{-1}=\begin{pmatrix}
        C & 0 \\ 0 & \mathrm{Id_{n-2}}
    \end{pmatrix}, \quad \text{for any} \ C\in\SO_0(2,3) \ .$$
    In the end, using the explicit expression of each matrix $L$ and $L=L^{-1}$, we obtain that its action by conjugation on the basis $\{E_1,E_2,E_3,\dots,E_r\}$ leaves it unchanged up to sign. This allows us to follow the same calculations made in the proof of Theorem \ref{thm:metriccompatiblewithorbifold} and conclude that $$\mathrm d\ast\big(\sum_i\sigma_i\otimes\#(L\phi_iL^{-1})\big)=0 \ .$$
   \end{proof}Let us see with two specific examples, what matrices in $\mathcal C_\rho/Z_{n+3}^\mathbb C$ represent orbifold points. As we can deduce from Proposition \ref{prop:orbifoldpointsforngeneral}, the first interesting case to analyze is for $n=4$. For instance, we need to understand how all matrices $L$ in $\SO_0(2,5)$ such that $$L\begin{pmatrix}
        C & 0 \\ 0 & \mathrm{Id_{2}}
    \end{pmatrix}L^{-1}=\begin{pmatrix}
        C & 0 \\ 0 & \mathrm{Id_{2}}
    \end{pmatrix}, \quad \text{for any} \ C\in\SO_0(2,3),$$ look like. It is not hard to see that they are given by $\big\{\mathrm{Id}_7, \big(\mathrm{Id}_5,-\mathrm{Id}_2\big), \big(-\mathrm{Id}_6,1\big), \big(-\mathrm{Id}_5,\mathrm{diag}(1,-1)\big)\big\}$. Since $Z_7^\mathbb C$ is trivial, we get that $$\mathcal{C}_\rho/Z_7^\mathbb C\cong\Z_2\times\Z_2=\big\{\mathrm{Id}_7, \big(\mathrm{Id}_5,-\mathrm{Id}_2\big), \big(-\mathrm{Id}_6,1\big), \big(-\mathrm{Id}_5,\mathrm{diag}(1,-1)\big)\big\} \ .$$ As for the subsequent case, namely $n=5$, we have that $Z_8^\mathbb C=\{\pm\mathrm{Id}_8\}$. Then, with the analogous argument as above we get \begin{align*}
        \mathcal{C}_\rho=\big\{\pm\mathrm{Id}_8, \pm\big(-\mathrm{Id}_6,\mathrm{Id}_2\big), \pm\big(-\mathrm{Id}_5,\diag(1,-1,1)\big), \pm\big(-\mathrm{Id}_5,\diag(1,1,-1)\big)\big\} \ ,
    \end{align*} so that the quotient by $Z_8^\mathbb C$ is identified with an isomorphic copy of $\Z_2\times\Z_2$.
In the end, iterating the procedure for arbitrary $n\ge 4$, we obtain  $$\bigslant{\mathcal C_\rho}{Z_{n+3}^\mathbb C}\cong\bigslant{\big(\Z_2\big)^{\times^{n-2}}}{Z_{n+3}^\mathbb C}, \ \text{where} \ Z_{n+3}^\mathbb C=\begin{cases}
    \big\{\mathrm{Id}_{n+3}\big\} \ \ \text{if $n$ is even} \\  \big\{\pm\mathrm{Id}_{n+3}\big\} \ \ \text{if $n$ is odd}
\end{cases}$$

\begin{theorem}\label{thm:totallygeodesicngeneral}
For any $n\ge 3$, the space $\big(\charvarietymaximalstiefel,\g\big)$ is totally geodesic in $\big(\charvarietynmaximal^{sw_1\neq 0}_{sw_2},\g\big)$.
\end{theorem}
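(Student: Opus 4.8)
The plan is to follow the scheme of Section~\ref{sec:4.3}: to exhibit $\varsigma\big(\charvarietymaximalstiefel\big)$ as a union of connected components of the fixed-point set of an isometry, to invoke the standard fact that such fixed sets are totally geodesic, and to chain these steps up using transitivity of the totally geodesic property. A single involution --- say conjugation by $\diag(\mathrm{Id}_5,-\mathrm{Id}_{n-2})\in\mathrm{O}(2,n+1)$ --- will not suffice, because its fixed set in $\charvarietynmaximal^{sw_1\neq 0}_{sw_2}$ also contains representations with image in $\SO_0(2,3)\times\SO(n-2)$ and small nontrivial $\SO(n-2)$-factor, which accumulate on $\varsigma(\charvarietymaximalstiefel)$. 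Instead I would factor the embedding $\R^{2,3}\hookrightarrow\R^{2,n+1}$ through the chain $\R^{2,3}\subset\R^{2,4}\subset\cdots\subset\R^{2,n+1}$, obtaining tight inclusions $\SO_0(2,3)\subset\SO_0(2,4)\subset\cdots\subset\SO_0(2,n+1)$, and for $3\le k\le n+1$ set $V_k\subseteq\charvarietynmaximal^{sw_1\neq 0}_{sw_2}$ equal to the locus of classes of representations that factor, up to conjugation, through the tightly embedded copy of $\SO_0(2,k)$. By tightness such representations are automatically maximal in $\SO_0(2,k)$, so $V_k$ is the image of the maximal $\SO_0(2,k)$-character variety with the same Stiefel--Whitney data; in particular $V_3=\varsigma(\charvarietymaximalstiefel)$ and $V_{n+1}=\charvarietynmaximal^{sw_1\neq 0}_{sw_2}$. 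Moreover, for $\rho\in V_k$ the associated maximal space-like surface lies in the totally geodesic copy $\mathbb{H}^{2,k-1}\subset\mathbb{H}^{2,n}$, so the scalar product $\iota$ splits orthogonally along $\R^{2,n+1}=\R^{2,k}\oplus\R^{n+1-k}$ and $\g$ restricts on $V_k$ to the intrinsic metric attached to the $\SO_0(2,k)$-problem; hence it suffices to prove that $V_k$ is totally geodesic in $V_{k+1}$ for $3\le k\le n$, the theorem then following by transitivity (and for $n=3$ only the step $k=3$ occurs).

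Fix such a $k$ and consider the involution $\rho\mapsto R_k\rho R_k^{-1}$ of the character variety, with $R_k:=\diag(\mathrm{Id}_{k+2},-1,\mathrm{Id}_{n-k})\in\mathrm{O}(2,n+1)$. First, $R_k$ normalizes $\SO_0(2,k+1)$, and one checks via the nonabelian Hodge correspondence that the induced involution of the Higgs bundle moduli leaves the Stiefel--Whitney classes unchanged, so conjugation by $R_k$ preserves $V_{k+1}$. Second, since $R_k$ is an isometry of $\mathbb{H}^{2,n}$, the unique maximal space-like embedding associated to $R_k\rho R_k^{-1}$ (Theorem~\ref{thm:rhoequivariantsurface}) is the $R_k$-image of the one associated to $\rho$; it is isometric to it, induces the same metric $h$, and carries $\iota$ to itself under conjugation, so that re-running the computations of Lemma~\ref{lem:invariantmetricgtotallygeodesic}, Proposition~\ref{prop:harmonicformsantidesitter} and Lemma~\ref{lem:gcompatibleorbifoldforanyn} in arbitrary dimension shows that conjugation by $R_k$ preserves the pairing $g$ on $\lso_0(2,n+1)_{\Ad\rho}$-valued $1$-forms, maps harmonic forms to harmonic forms, and is compatible with the orbifold loci; hence it is an isometry of $\big(V_{k+1},\g\big)$. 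Third, because $\Ad(R_k)$ acts as $+1$ on the $\lso_0(2,k)$-block of $\lso_0(2,k+1)$ and as $-1$ on its off-diagonal complement $\cong\R^{2,k}$, the infinitesimal fixed space at $\rho\in V_k$ is exactly $H^1\big(\Sg,\lso_0(2,k)_{\Ad\rho}\big)=T_\rho V_k$; combined with the fact that a maximal $\SO_0(2,k+1)$-representation fixed by the involution has Zariski closure in the tight Hermitian subgroup $\SO_0(2,k)$ (in the style of Section~\ref{sec:4.2} and Proposition~\ref{prop:orbifoldpointsforngeneral}), this shows $V_k$ is open and closed in the fixed set of $\rho\mapsto R_k\rho R_k^{-1}$ inside $V_{k+1}$, hence a union of its connected components, hence totally geodesic in $V_{k+1}$.

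The routine ingredients --- the isometry property of conjugation by $R_k$ and compatibility with the orbifold structure --- are exactly the arguments already developed in Sections~\ref{sec:4.2}--\ref{sec:4.3} and Lemma~\ref{lem:gcompatibleorbifoldforanyn}. The genuine obstacle is the identification of the fixed locus in the third step: one must rule out the spurious components that a priori arise from representations into non-Hermitian, or wrongly-signed, symmetric subgroups of $\SO_0(2,k+1)$, and keep careful track of the Stiefel--Whitney bookkeeping along the embedding. This is precisely where the nonabelian Hodge correspondence, together with the tightness and maximality constraints, is needed, mirroring the analysis of Proposition~\ref{prop:orbifoldpointsforngeneral}.
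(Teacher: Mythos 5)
Your proposal is correct in outline and is in fact more careful than the paper's own argument, which uses exactly the single involution you warn against: the paper conjugates by $Q=\diag(-\mathrm{Id}_5,\mathrm{Id}_{n-2})\in\mathrm O(2,n+1)$ (the same map as conjugation by $\diag(\mathrm{Id}_5,-\mathrm{Id}_{n-2})$, since the two matrices differ by the central element $-\mathrm{Id}_{n+3}$), proves it is an isometry of $\g$ by the same reductions to Lemma \ref{lem:invariantmetricgtotallygeodesic}, Proposition \ref{prop:harmonicformsantidesitter} and Lemma \ref{lem:gcompatibleorbifoldforanyn} that you invoke, and then asserts that its fixed locus is $\charvarietymaximalstiefel$. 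Your objection to that last step is well founded when $n\ge 4$: a representation $\rho_0\times\alpha$ with $\rho_0\in\charvarietymaximalstiefel$ and $\alpha:\pi_1(\Sg)\to\SO(n-2)$ nontrivial but deformable to the trivial representation is still maximal, has unchanged Stiefel--Whitney classes, is centralized by $Q$, and converges to $\rho_0$ as $\alpha$ degenerates; infinitesimally, the $+1$-eigenspace of $\Ad Q$ on $\lso_0(2,n+1)$ is $\lso_0(2,3)\oplus\lso(n-2)$, so the fixed subspace of $q_\ast$ at $\rho_0$ contains $H^1(\Sg,\R)\otimes\lso(n-2)\neq 0$, which is transverse to $T_{\rho_0}\varsigma\big(\charvarietymaximalstiefel\big)$. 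Hence the component of the fixed set through $\rho_0$ is strictly larger than $\varsigma\big(\charvarietymaximalstiefel\big)$, and the standard fixed-point argument does not apply verbatim; your chain of involutions $R_k$ repairs this, since at each step the complementary block is $\R^{2,k}$ with no $\lso(1)$-summand and the $+1$-eigenspace collapses exactly onto $\lso_0(2,k)$. An equivalent one-step variant would be to take the common fixed locus of the whole group $(\Z_2)^{n-2}$ of diagonal sign changes in the last $n-2$ coordinates (the group already appearing as $\mathcal C_\rho$ in Lemma \ref{lem:gcompatibleorbifoldforanyn}), whose invariants in both $\lso(n-2)$ and the off-diagonal block vanish.

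Two points of your sketch still need to be written out: that each intermediate locus $V_{k+1}$ carries the restricted metric and that transitivity of the totally geodesic property can be run across the orbifold loci; and the set-theoretic (not merely infinitesimal) identification of $V_k$ as open and closed in the fixed set of $R_k$, for which the Higgs-bundle analysis of Proposition \ref{prop:orbifoldpointsforngeneral} is indeed the right tool. For $n=3$ your argument and the paper's coincide and there is no issue.
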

\begin{proof}
The proof follows the lines of the argument used in Section \ref{sec:4.3}. Let us first consider the map $q:\charvarietynmaximal^{sw_1\neq 0}_{sw_2}\longrightarrow\charvarietynmaximal^{sw_1\neq 0}_{sw_2}$ which sends the representation $\rho$ to $Q\rho Q^{-1}$, where $Q:=\diag(-\mathrm{Id}_5,\mathrm{Id}_{n-2})=Q^{-1}\in\mathrm{O}(2,n+1)$. It is clear that $q$ fixes the representations contained in $\charvarietymaximalstiefel$. Hence, we need to prove that the map $q_\ast$ induced on $\lso_0(2,n+1)$-valued $1$-forms preserves the Riemannian metric $g$ defined in (\ref{eq:definitiong}) and sends harmonic forms to harmonic forms. The former follows directly by extending the proof of Lemma \ref{lem:invariantmetricgtotallygeodesic} to an arbitrary $n\ge 3$, indeed the existence of the associated $\rho$-equivariant maximal space-like surface in $\Hyp$ is guaranteed for any $n\ge 2$ (Theorem \ref{thm:rhoequivariantsurface}). The latter is a combination of the computations performed in Proposition \ref{prop:harmonicformsantidesitter} together with the argument used in Lemma \ref{lem:gcompatibleorbifoldforanyn}. As a result, we conclude that the map $q$ is an isometry with respect to $\g$ whose fixed points locus is the space $\charvarietymaximalstiefel$ which is therefore totally geodesic.
\end{proof}
\subsection{Gothen and Hitchin components}\label{sec:4.4}
Recall that there are $4g-3$ connected components contained in $\charvarietymaximal$, with $sw_1=0$, parameterized by an integer $d\in[0,4g-4]$ that corresponds to the degree of a certain holomorphic bundle over the surface, associated with the representation (see Section \ref{sec:4.1}). The ones corresponding to $d\in(0,4g-4)$ are the so-called Gothen components and denoted with $\mathcal{M}_d(\Sg)$ (see Section \ref{sec:4.1}), on the other hand when $d=4g-4$ we retrieve the Hitchin component $\Hit(\Sg)$. Thus, for any $d\in(0,4g-4]$, the space $\mathcal{M}_d(\Sg)$ is smooth, hence it carries a well-defined Riemannian metric $\g$ (Theorem \ref{thm:metricGothencomponentHitchin}). As soon as $n\ge 3$, there are no more such exceptional components in the maximal character variety (Theorem \ref{thm:decompositioncomponents}), and it is therefore not clear a-priori how the inclusion induced by the isometric embedding \begin{align*}& \ \ \ \ \ \ \ \ \ \quad\R^{2,3}\longrightarrow\R^{2,n+1}  
\\ &(x_1,x_2,x_3,x_4,x_5)\mapsto(x_1,x_2,x_3,x_4,x_5,0,\dots,0)\end{align*} acts on representations. Actually, this problem has already been studied by Collier (\cite[Proposition 5.6]{collier2018psl}) using Higgs bundle theory. Here we will give some details on the aforementioned inclusion, and then try to figure out what kind of singularities Gothen and Hitchin components form when $n\ge 3$. \\ \\
In Section \ref{sec:4.1} we saw that if $(\mathcal W,b_\mathcal W,q_2,\beta_0)$ is a maximal polystable $\SO_0(2,3)$-Higgs bundle over $X=(\Sg,J)$, and if the first Stiefel-Whitney class $sw_1$ of $\mathcal W$ vanishes, then $\mathcal W$ is endowed with a $\SO(2,\C)$-structure and there is a further holomorphic splitting $$(\mathcal W,b_\mathcal W)=\bigg(\mathcal F\oplus\mathcal F^{-1},\begin{pmatrix}
    0 & 1 \\ 1 & 0 
\end{pmatrix}\bigg) \ ,$$ where $\mathcal F$ is a holomorphic line bundle over $X$. The degree $d$ of $\mathcal F$ is exactly the topological invariant which distinguishes connected components in $\charvarietymaximal$ with $sw_1=0$. \begin{lemma}\label{lem:gothencomponentinclusion}
Let $n\ge3$ and let $\varsigma:\charvarietymaximal\to\mathfrak{R}^{\text{max}}_{2,n+1}(\Sg)$ be the inclusion induced by the isometric embedding $\R^{2,3}\to\R^{2,n+1}$ described above. Then, for any $d\in[0,4g-4]$, the connected component $\mathcal{M}_d(\Sg)$ is contained in $\mathfrak{R}^{\text{max}}_{2,n+1}(\Sg)^{sw_1=0}_{sw_2=0}$ when $d$ is even and is contained in $\mathfrak{R}^{\text{max}}_{2,n+1}(\Sg)^{sw_1=0}_{sw_2\neq0}$ when $d$ is odd.
\end{lemma}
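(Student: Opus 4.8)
The plan is to trace through what happens to the $\SO_0(2,3)$-Higgs bundle data $(\mathcal W, b_\mathcal W, q_2, \beta_0)$ under the inclusion $\varsigma$ and to identify the Stiefel--Whitney classes of the resulting rank-$n$ bundle. First I would recall, from the discussion preceding Proposition \ref{prop:orbifoldpointsforngeneral}, that a maximal $\SO_0(2,3)$-Higgs bundle with $sw_1 = 0$ carries the $\SO(2,\C)$-reduction $(\mathcal W, b_\mathcal W) = (\mathcal F \oplus \mathcal F^{-1}, \begin{psmallmatrix} 0 & 1 \\ 1 & 0 \end{psmallmatrix})$ with $\deg \mathcal F = d$, and that the inclusion $\R^{2,3} \hookrightarrow \R^{2,n+1}$ sends this to the maximal $\SO_0(2,n+1)$-Higgs bundle with rank-$n$ orthogonal bundle $\mathcal W' := \mathcal W \oplus \mathcal O_X^{\oplus(n-2)} = \mathcal F \oplus \mathcal F^{-1} \oplus \mathcal O_X^{\oplus(n-2)}$, equipped with the direct-sum orthogonal structure (and $q_2, \beta_0$ unchanged, viewed in the larger bundle). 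So the whole problem reduces to computing $sw_1(\mathcal W')$ and $sw_2(\mathcal W')$ from this explicit direct-sum description.

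Next I would use the Whitney sum formula for Stiefel--Whitney classes together with the standard identification between the topological invariants of an $\SO(k,\C)$-bundle and those of the underlying real $\SO(k)$-bundle. Since $\mathcal O_X$ contributes trivially, $sw_i(\mathcal W') = sw_i(\mathcal F \oplus \mathcal F^{-1})$. The bundle $\mathcal F \oplus \mathcal F^{-1}$ is the complexification-type object associated to the line bundle $\mathcal F$; its first Stiefel--Whitney class vanishes (this is precisely the hypothesis $sw_1 = 0$, consistent with the $\SO(2,\C)$-reduction existing), and its second Stiefel--Whitney class is the mod-$2$ reduction of the Euler class, which for $\mathcal F \oplus \mathcal F^{-1}$ equals $\deg \mathcal F \bmod 2 = d \bmod 2$ in $H^2(\Sg, \Z_2) \cong \Z_2$. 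Hence $sw_1(\mathcal W') = 0$ always, and $sw_2(\mathcal W') = d \bmod 2$: zero when $d$ is even, nonzero when $d$ is odd. This is exactly the claimed dichotomy, and by the nonabelian Hodge correspondence (Theorem \ref{thm:nonabelianHodge}) together with Theorem \ref{thm:decompositioncomponents} it translates into the statement that $\varsigma(\mathcal M_d(\Sg))$ lands in $\mathfrak{R}^{\text{max}}_{2,n+1}(\Sg)^{sw_1=0}_{sw_2=0}$ or $\mathfrak{R}^{\text{max}}_{2,n+1}(\Sg)^{sw_1=0}_{sw_2\neq 0}$ accordingly; since $\mathcal M_d(\Sg)$ is connected and the Stiefel--Whitney classes are locally constant, it suffices to check this at one representation, but the computation above is uniform so no such reduction is even needed.

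The main obstacle I anticipate is being careful about the precise relationship between the Stiefel--Whitney classes of the \emph{holomorphic} $\SO(n,\C)$-bundle $\mathcal W'$ and those entering the decomposition of Theorem \ref{thm:decompositioncomponents}, i.e. checking that the adjustment by the trivial summand $\mathcal O_X^{\oplus(n-2)}$ genuinely does not alter the invariants and that the orthogonal structure on $\mathcal W'$ is the correct one coming from the embedding $\R^{2,3} \hookrightarrow \R^{2,n+1}$ (as opposed to some twist). I would resolve this by citing Collier's analysis (\cite[Proposition 5.6]{collier2018psl}), which already carries out exactly this Higgs-bundle-theoretic computation of how the inclusion acts on components, and by noting that the present lemma is essentially a restatement of that result in the language used here; the only genuinely new content is the bookkeeping identifying $\deg \mathcal F \bmod 2$ with $sw_2$, which is the classical fact recalled above.
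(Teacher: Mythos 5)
Your proposal is correct and follows essentially the same route as the paper: pass to the associated maximal Higgs bundle, observe that the inclusion adds the trivial summand $\mathcal O_X^{\oplus(n-2)}$ which leaves both Stiefel--Whitney classes unchanged, and use the standard fact that for the $\SO(2,\C)$-reduction $\mathcal F\oplus\mathcal F^{-1}$ the degree $d=\deg\mathcal F$ refines $sw_2$, so $sw_2 = d \pmod 2$. The extra remarks (Whitney sum formula, connectedness of $\mathcal M_d(\Sg)$, the citation of Collier) are harmless elaborations of what the paper states more tersely.
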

\begin{proof}
At the level of Higgs bundles, the inclusion described above maps the maximal polystable $\SO_0(2,3)$-Higgs bundle $(\mathcal W',b_{\mathcal W'},q_2,\beta_0)$ to the maximal polystable $\SO_0(2,n+1)$-Higgs bundles determined by the quadr-uple $(\mathcal W,b_{\mathcal W},q_2,\beta_0)$, with $\mathcal W:=\mathcal W'\oplus\mathcal O_X^{\oplus^{n-2}}$. Given that the topological invariants of $\mathcal W'$ are the same as $\mathcal W$, we get the following two refined inclusions \begin{equation}\label{eq:inclusioncharacter}\charvarietymaximalstiefel\hookrightarrow\mathfrak{R}^{\text{max}}_{2,n+1}(\Sg)^{sw_1\neq 0}_{sw_2} \ ,\quad \bigsqcup_{0\le d\le 4g-4}\mathcal{M}_d(\Sg)\hookrightarrow\mathfrak{R}^{\text{max}}_{2,n+1}(\Sg)^{sw_1=0}_{sw_2} \ .\end{equation} In particular, if $(\mathcal W',b_{\mathcal W'},q_2,\beta_0)$ has $sw_1(\mathcal W')=0$ and belongs to one of the Gothen components, the degree $d=\deg(\mathcal F')$ refines the second Stiefel-Whitney class of $\mathcal W'$. In other words, $$sw_2(\mathcal W)=sw_2(\mathcal W')= d \ (\text{mod} \ 2) \ .$$ The claim is obtained by the above observation together with the inclusions (\ref{eq:inclusioncharacter}).
\end{proof}
\begin{proposition}\label{prop:Gothencomponentorbifold}
Let $d\in (0,4g-4]$, then any representation belonging to $\mathcal{M}_d(\Sg)$ inside $\mathfrak{R}^{\text{max}}_{2,n+1}(\Sg)^{sw_1=0}_{sw_2}$ is a smooth point if $n=3$ and it is an orbifold point of type $\mathcal C_\rho/Z_{n+3}^\mathbb C$ if $n\ge 4$, where $\mathcal C_\rho$ is the centralizer of $\rho\big(\pi_1(\Sg)\big)$ inside $\SO_0(2,n+1)$ and $Z_{n+3}^\mathbb C$ is the center of $\SO(n+3,\mathbb C)$. Moreover, for any $n\ge 4$, we have $$\bigslant{\mathcal C_\rho}{Z_{n+3}^\mathbb C}\cong\bigslant{\big(\Z_2\big)^{\times^{n-2}}}{Z_{n+3}^\mathbb C} \ .$$
\end{proposition}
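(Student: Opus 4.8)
The plan is to mirror, verbatim, the argument already used for the case $sw_1\ne 0$ (Proposition \ref{prop:orbifoldpointsforngeneral} and the discussion following Lemma \ref{lem:gcompatibleorbifoldforanyn}), exploiting the fact that the only structural difference between a representation landing in $\charvarietymaximalstiefel$ and one landing in a Gothen/Hitchin component $\mathcal{M}_d(\Sg)$ is the topological type of the rank-$2$ bundle $\mathcal W'$; this plays no role whatsoever in the computation of the centralizer once we have the Higgs-bundle reduction.

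First I would set up the reduction. By Lemma \ref{lem:gothencomponentinclusion} (and the inclusions (\ref{eq:inclusioncharacter})), a representation $\rho\in\mathcal{M}_d(\Sg)$ viewed inside $\mathfrak{R}^{\text{max}}_{2,n+1}(\Sg)^{sw_1=0}_{sw_2}$ corresponds to a maximal polystable $\SO_0(2,n+1)$-Higgs bundle with $\mathcal W=\mathcal W'\oplus\mathcal O_X^{\oplus(n-2)}$, where $(\mathcal W',b_{\mathcal W'},q_2,\beta_0)$ is the maximal $\SO_0(2,3)$-Higgs bundle of $\rho$. Since $d\in(0,4g-4]$, the associated $\SO(5,\C)$-Higgs bundle is stable (e.g. by \cite[Proposition 4.16]{alessandrini2019geometry} in the Hitchin/Gothen range, or since Gothen and Hitchin representations are Zariski dense in $\SO_0(2,3)$), and adding a trivial holomorphic summand does not affect stability, so the associated $\SO(n+3,\C)$-Higgs bundle is stable. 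By \cite[Proposition 2.12]{alessandrini2019geometry} the centralizer $\mathcal C_\rho<\SO_0(2,n+1)$ is therefore finite and the orbifold type of the point is $\mathcal C_\rho/Z_{n+3}^\C$. For $n=3$ one has $Z_6^\C=\{\pm\mathrm{Id}_6\}$, and since $\rho(\pi_1(\Sg))<\SO_0(2,3)\times\SO(1)$ is Zariski dense in the $\SO_0(2,3)$-factor, the only matrices of $\SO_0(2,4)$ commuting with it are $\pm\mathrm{Id}_6$; hence $\mathcal C_\rho=Z_6^\C$ and $\rho$ is a smooth point. For $n\ge 4$ the strict inclusion $Z_{n+3}^\C\subsetneq\mathcal C_\rho$ is immediate (e.g. $\diag(\mathrm{Id}_6,-\mathrm{Id}_{n-3})$ commutes with $\SO_0(2,3)\times\SO(1)^{\times(n-2)}$ but is not central), so $\rho$ is a genuine orbifold point of type $\mathcal C_\rho/Z_{n+3}^\C$.

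For the last assertion I would compute $\mathcal C_\rho$ explicitly exactly as in the $sw_1\ne0$ case: any $L\in\mathcal C_\rho$ must lie in $\SO_0(2,n+1)$ and satisfy $L\begin{psmallmatrix} C & 0 \\ 0 & \mathrm{Id}_{n-2}\end{psmallmatrix}L^{-1}=\begin{psmallmatrix} C & 0 \\ 0 & \mathrm{Id}_{n-2}\end{psmallmatrix}$ for all $C\in\SO_0(2,3)$, which forces $L$ to be block-diagonal with the first block a scalar $\pm\mathrm{Id}_5$ (by irreducibility of the standard $\SO_0(2,3)$-action on $\R^5$, using that the image is Zariski dense in the $\SO_0(2,3)$-factor) and the second block an arbitrary element of $\mathrm O(n-2)$ commuting with $\mathrm{Id}_{n-2}$, i.e.\ an arbitrary element of $\mathrm O(n-2)$; the determinant condition then cuts this down to a copy of $(\Z_2)^{\times(n-2)}$ — concretely, $\{\pm\mathrm{Id}_5\}$ times the diagonal $\pm1$ matrices in the last $n-2$ coordinates, subject to $\det L=1$. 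Quotienting by $Z_{n+3}^\C$ (which is $\{\mathrm{Id}\}$ for $n$ even and $\{\pm\mathrm{Id}_{n+3}\}$ for $n$ odd) gives exactly $\mathcal C_\rho/Z_{n+3}^\C\cong(\Z_2)^{\times(n-2)}/Z_{n+3}^\C$, as claimed; the sample computations for $n=4,5$ in the $sw_1\ne0$ discussion carry over with identical matrices.

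The main obstacle — really the only subtlety — is justifying that the scalar-on-the-first-block rigidity holds, i.e.\ that $\overline{\rho(\pi_1(\Sg))}$ surjects onto the $\SO_0(2,3)$-factor (not a proper subgroup of it) for $\rho\in\mathcal{M}_d(\Sg)$ with $d\in(0,4g-4]$. For $d\in(0,4g-4)$ this follows from Zariski density of Gothen representations in $\SO_0(2,3)$ (\cite{alessandrini2019geometry}), and for $d=4g-4$ from Zariski density of Hitchin representations; this ensures that Schur's lemma applied to the irreducible $5$-dimensional representation pins the first block down to $\pm\mathrm{Id}_5$, which is the step that makes the centralizer computation rigorous rather than merely formal. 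Everything else is a transcription of the already-established $sw_1\ne0$ argument.
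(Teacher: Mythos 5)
Your proposal follows the same route as the paper: reduce to the $\SO_0(2,3)$-Higgs bundle plus a trivial orthogonal summand, deduce stability of the associated complex orthogonal Higgs bundle, invoke \cite[Proposition 2.12]{alessandrini2019geometry} for the orbifold type $\mathcal C_\rho/Z_{n+3}^{\C}$, and then identify $\mathcal C_\rho$ with $(\Z_2)^{\times^{n-2}}$. However, the step you yourself single out as ``the main obstacle'' is justified incorrectly. Representations in $\mathcal M_d(\Sg)$ are Zariski dense only for $d\in(0,4g-4)$; the Hitchin component $d=4g-4$ contains the Fuchsian locus, whose representations have Zariski closure the principal $\SO_0(2,1)$, so ``Zariski density of Hitchin representations'' is false and cannot underwrite either the stability of the $\SO(5,\C)$-Higgs bundle or the Schur-type rigidity of the first block at those points. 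The paper sidesteps this entirely: it decomposes $\beta_0=(\nu,\mu)$ along $\mathcal W'=\mathcal F\oplus\mathcal F^{-1}$, notes that $\mu\in H^0(X,\mathcal F^{-1}\otimes K_X^2)$ is nonzero, and applies \cite[Proposition 4.13]{alessandrini2019geometry} (not 4.16, which is the $sw_1\neq 0$ statement you quote), giving stability uniformly for all $d\in(0,4g-4]$, Fuchsian points included. The Schur step can be salvaged at Fuchsian points because the principal $\SO_0(2,1)$ still acts irreducibly on $\R^5$, but that is a different argument from the one you wrote.

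A second concrete problem is your assertion that ``the determinant condition then cuts $\mathrm O(n-2)$ down to a copy of $(\Z_2)^{\times(n-2)}$.'' It does not: $\SO(n-2)$ is connected, positive-dimensional for $n\ge 4$, has determinant one, and centralizes $\SO_0(2,3)\times\{\mathrm{Id}_{n-2}\}$ in the normal form you set up, so nothing in your computation eliminates it. In the paper the finiteness of $\mathcal C_\rho$ is not extracted from the block-diagonal normal form but is a consequence of stability of the $\SO(n+3,\C)$-Higgs bundle via \cite[Proposition 2.12]{alessandrini2019geometry}; the explicit $\pm 1$-diagonal description is then read off as in Section \ref{sec:5.1}. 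Your final identification of $\mathcal C_\rho/Z_{n+3}^{\C}$ therefore needs to be routed through that stability input rather than through the determinant constraint, which as written leaves a genuine gap.
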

\begin{proof}
Let $\rho\in\mathcal{M}_d(\Sg)\subset\mathfrak{R}^{\text{max}}_{2,n+1}(\Sg)^{sw_1=0}_{sw_2}$, for $d\in(0,4g-4]$. As we already explained (see Section \ref{sec:4.2}), there is a reduction of the structure group of the associated Higgs bundle from $\SO_0(2,n+1)$ to $\SO_0(2,3)$. Using the same notation as the previous lemma, since $sw_1=0$, the quadr-uple associated with $\rho$ is given by $(\mathcal W,b_\mathcal W,q_2,\beta_0)$ where $\mathcal{W}\cong\mathcal W'\oplus\mathcal O_X^{\oplus^{n-2}}, \ \mathcal L^{-1}\otimes K_X\cong\det\mathcal W\cong\mathcal O_X$ and $(\mathcal W',b_{\mathcal W'})=\big(\mathcal F\oplus\mathcal F^{-1},\begin{psmallmatrix}
    0 & 1 \\ 1 & 0 
\end{psmallmatrix}\big)$. The holomorphic splitting for $\mathcal W'$ induces a further decomposition $$\beta_0=\begin{pmatrix}
     \nu  \\
      \mu
\end{pmatrix}:K_X^{-1}\longrightarrow\big(\mathcal F\oplus\mathcal F^{-1}\big)\otimes K_X \ ,$$ where $\nu\in H^0\big(X,\mathcal F\otimes K_X^2\big)$ and $0\neq\mu\in H^0\big(X,\mathcal F^{-1}\otimes K_X^2\big)$. Given that the $\SO(5,\C)$-bundle associated with $(\mathcal F,q_2,\nu,\mu)$ is stable (\cite[Proposition 4.13]{alessandrini2019geometry}), the $\SO(n+3,\C)$-bundle associated with $(\mathcal W,b_\mathcal W,q_2,\beta_0)$ is stable as well (indeed the two differ by the sum of a trivial holomorphic bundle). In particular, the centralizer $\mathcal C_\rho:=C\big(\rho(\pi_1(\Sg))\big)<\SO_0(2,n+1)$ is finite and it is a copy of $\big(\mathbb Z_2\big)^{n-2}$ generated by diagonal matrices in $\SO_0(2,n+1)$ with only $\pm 1$, as we explained in Section \ref{sec:5.1}. Using the same argument as Proposition \ref{prop:orbifoldpointsforngeneral}, we conclude that $\rho$ is smooth point if $n=3$ and an orbifold point if $n\ge 4$.
\end{proof}
\begin{theorem}
For any $d\in (0,4g-4]$, the connected component $\big(\mathcal{M}_d(\Sg),\g\big)$ is totally geodesic in $\big(\mathfrak{R}^{\text{max}}_{2,n+1}(\Sg)^{sw_1=0}_{sw_2=0},\g\big)$ when $d$ is even and is totally geodesic in $\big(\mathfrak{R}^{\text{max}}_{2,n+1}(\Sg)^{sw_1=0}_{sw_2\neq0},\g\big)$ when $d$ is odd.
\end{theorem}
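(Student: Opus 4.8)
The plan is to reproduce the argument of Theorem \ref{thm:totallygeodesicngeneral} and of Section \ref{sec:4.3}: exhibit an isometry of the ambient component whose fixed–point locus contains $\mathcal{M}_d(\Sg)$ as a connected component, and then invoke the classical fact that connected components of the fixed set of an isometry are totally geodesic sub-varieties — read in the orbifold sense when $n\ge 4$, since by Proposition \ref{prop:Gothencomponentorbifold} the points of $\mathcal{M}_d(\Sg)$ are then orbifold points of type $\mathcal C_\rho/Z_{n+3}^{\mathbb C}\cong(\Z_2)^{n-2}/Z_{n+3}^{\mathbb C}$. By Lemma \ref{lem:gothencomponentinclusion} the relevant ambient component is $\mathfrak{R}^{\text{max}}_{2,n+1}(\Sg)^{sw_1=0}_{sw_2=0}$ when $d$ is even and $\mathfrak{R}^{\text{max}}_{2,n+1}(\Sg)^{sw_1=0}_{sw_2\neq0}$ when $d$ is odd, and both are connected by Theorem \ref{thm:decompositioncomponents}. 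First I would check that $\g$ is a well-defined Riemannian metric there: when $n\ge 4$ this is the orbifold-compatibility statement whose proof is word-for-word that of Lemma \ref{lem:gcompatibleorbifoldforanyn} (it uses only that the centralizer is generated by diagonal $\pm1$ matrices, which is Proposition \ref{prop:Gothencomponentorbifold}), and for $n=3$ the points are smooth and $\g$ is supplied by Theorem \ref{thm:metricGothencomponentHitchin}. One should also observe that the restriction of the ambient $\g$ to $\mathcal{M}_d(\Sg)$ coincides with its intrinsic metric, which follows from the geometric nature of $\g$: a representation valued in $\SO_0(2,3)\subset\SO_0(2,n+1)$ has its unique equivariant maximal space-like surface inside a totally geodesic $\mathbb{H}^{2,1}\subset\Hyp$, and this is the same surface used to build the intrinsic metric.

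The isometry is $q\colon\rho\mapsto Q\rho Q^{-1}$ with $Q\defin\diag(-\mathrm{Id}_5,\mathrm{Id}_{n-2})=Q^{-1}\in\mathrm O(2,n+1)$, exactly as in Theorem \ref{thm:totallygeodesicngeneral} but with this $Q$. Since $-\mathrm{Id}_5$ is central in $\GL(5,\R)$, the matrix $Q$ commutes with the embedded block $\SO_0(2,3)\times\mathrm{Id}_{n-2}$, so $q$ fixes every class in the image of $\varsigma$ restricted to $\mathcal{M}_d(\Sg)$, and by continuity $q$ preserves the connected component containing it. I would then extend Lemma \ref{lem:invariantmetricgtotallygeodesic} to arbitrary $n\ge 3$ to show that the induced map $q_\ast$ on $\lso_0(2,n+1)_{\Ad\rho}$-valued $1$-forms (cf. Lemma \ref{lem:qinducedoncohomology}) preserves the pairing $g$ of (\ref{eq:definitiong}): as $Q\in\mathrm O(2,n+1)$ acts on $\Hyp$ by isometries, the $Q\rho Q^{-1}$-equivariant maximal space-like surface (Theorem \ref{thm:rhoequivariantsurface}) is isometric to the $\rho$-equivariant one, so $h$ is unchanged and the matrix of $\iota$ transforms by $H\mapsto Q^tHQ$; the trace identity of Lemma \ref{lem:matrixscalarproduct} then yields $\iota^q(Q\phi Q^{-1},Q\phi'Q^{-1})=\iota(\phi,\phi')$ and hence $g(q_\ast\cdot,q_\ast\cdot)=g(\cdot,\cdot)$.

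Next I would prove that $q_\ast$ sends harmonic forms to harmonic forms, combining the computations of Proposition \ref{prop:harmonicformsantidesitter} and Lemma \ref{lem:gcompatibleorbifoldforanyn}. The $\mathrm d$-closedness of $\sum_i\sigma_i\otimes Q\phi_iQ^{-1}$ is immediate by linearity; for $\delta$-closedness one works with the basis $\{E_j\}$ of $\lso_0(2,n+1)$ of Lemma \ref{lem:gcompatibleorbifoldforanyn}, notes that conjugation by the diagonal $\pm1$ matrix $Q$ sends each $E_j$ to $\pm E_j$, and concludes from the $\iota$-invariance above that the coefficients of $\ast\#\big(\sum_i\sigma_i\otimes Q\phi_iQ^{-1}\big)$ are, up to sign, the $\mathrm d$-closed coefficients of $\ast\#\big(\sum_i\sigma_i\otimes\phi_i\big)$. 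Thus $q_\ast$ carries the harmonic representative to the harmonic representative, so $q$ is an isometry of $\big(\mathfrak{R}^{\text{max}}_{2,n+1}(\Sg)^{sw_1=0}_{sw_2},\g\big)$; its fixed-point locus in this component is the set of classes conjugate into $\SO_0(2,3)\times\mathrm{Id}_{n-2}$, that is $\bigsqcup_{d\equiv sw_2\ (2)}\mathcal{M}_d(\Sg)$, and each $\mathcal{M}_d(\Sg)$, being connected, is a connected component of $\mathrm{Fix}(q)$ and therefore totally geodesic.

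The main obstacle is the last step, namely verifying that $\varsigma(\mathcal{M}_d(\Sg))$ is exactly a connected component of $\mathrm{Fix}(q)$. Concretely one must show that the $(+1)$-eigenspace of $q_\ast$ acting on $H^1\big(\Sg,\lso_0(2,n+1)_{\Ad\rho}\big)$ has dimension equal to $\dim\mathcal{M}_d(\Sg)$, so that $\varsigma(\mathcal{M}_d(\Sg))$ is open — and, being closed as the image of a connected component of the $\SO_0(2,3)$-character variety, clopen — in $\mathrm{Fix}(q)$. This requires reading off from the Higgs-bundle description of $T_{[\rho]}\mathfrak{R}^{\text{max}}_{2,n+1}(\Sg)$ which infinitesimal deformations remain inside the reduced $\SO_0(2,3)$-locus, the same kind of bookkeeping performed in Section \ref{sec:5.1} for $\charvarietymaximalstiefel$; everything else is a dimension-independent rerun of Lemmas \ref{lem:invariantmetricgtotallygeodesic}–\ref{lem:gcompatibleorbifoldforanyn} and Proposition \ref{prop:harmonicformsantidesitter}.
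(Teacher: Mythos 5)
Your proposal follows essentially the same route as the paper: its proof is a short combination of Lemma \ref{lem:gcompatibleorbifoldforanyn} (orbifold compatibility of $\g$ at the points classified in Proposition \ref{prop:Gothencomponentorbifold}) with the isometry $q(\rho)=Q\rho Q^{-1}$, $Q=\diag(-\mathrm{Id}_5,\mathrm{Id}_{n-2})$, of Theorem \ref{thm:totallygeodesicngeneral}, exactly as you reconstruct it. The only difference is that you explicitly flag the need to verify that $\varsigma\big(\mathcal M_d(\Sg)\big)$ is a genuine connected component of $\mathrm{Fix}(q)$ --- a point the paper passes over silently --- which is added care rather than a divergence in method.
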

\begin{proof}
The claim follows from a combination of the results obtained above and the strategy used in Section \ref{sec:5.1}. In fact, again, the Zariski closure of the representations we are considering is contained in $\SO_0(2,3)\times\SO(1)\times\dots\times\SO(1)$. Thus, using the argument of Lemma \ref{lem:gcompatibleorbifoldforanyn} we obtain that the Riemannian metric $\g$ on $\charvarietynmaximal^{sw_1}_{sw_2}$ is compatible with the orbifold singularities arising from Proposition \ref{prop:Gothencomponentorbifold}, and using the same as Theorem \ref{thm:totallygeodesicngeneral} that the spaces $\mathcal{M}_d(\Sg)$ are totally geodesic in $\charvarietynmaximal^{sw_1=0}_{sw_2}$ or $\charvarietynmaximal^{sw_1=0}_{sw_2=0}$ with respect to $\g$ and according to the parity of the integer $d$.
\end{proof}

\printbibliography
\end{document}